\DeclarePairedDelimiter\abs{\lvert}{\rvert}
\let\hide\iffalse
\newtheorem{theorem}{Theorem}[section]
\newtheorem{lemma}[theorem]{Lemma}
\newtheorem{proposition}[theorem]{Proposition}
\newtheorem{remark}[theorem]{Remark}
\let\p=\partial
\let\t=\vartheta
\let\O=\Omega
\let\o=\omega
\let\g=\gamma
\let\t=\theta
\let\b=\beta
\newcommand{\R}{\mathbb{R}}
\newcommand{\T}{\mathbb{T}}
\renewcommand{\S}{\mathbb{S}}
\newcommand{\be}{\begin{equation}}
\newcommand{\bm}{\begin{multline}}
\newcommand{\ee}{\end{equation}}
\newcommand{\Bes}{\begin{eqnarray*}}
	\newcommand{\Ees}{\end{eqnarray*}}
\newcommand{\Be}{\begin{equation}}
\newcommand{\Ee}{\end{equation}}
\def\p{\partial}
\def\O{\Omega}
\def\R{\mathbb{R}}
\def\B{\begin{equation}}
\def\E{\end{equation}}
\def\BN{\begin{eqnarray*}}
\def\EN{\end{eqnarray*}}
\newcommand{\om}{\omega}
\numberwithin{equation}{section}
\begin{document}

\title[The large amplitude solution of the Boltzmann equation with soft potential]{The large amplitude solution of the Boltzmann equation with soft potential}

\author[G. Ko]{Gyounghun Ko}
\address[GK]{Department of Mathematics, Pohang University of Science and Technology, South Korea}
\email{gyounghun347@postech.ac.kr}

\author[D. Lee]{Donghyun Lee}
\address[DL]{Department of Mathematics, Pohang University of Science and Technology, South Korea}
\email{donglee@postech.ac.kr}

\author[K.Park]{Kwanghyuk Park}
\address[KP]{Department of Mathematics, Pohang University of Science and Technology, South Korea}
\email{pkh0219@postech.ac.kr}

\begin{abstract}
	In this paper, we deal with the (angular cut-off) Boltzmann equation with soft potential ($-3 < \gamma < 0$). In particular, we construct a unique global solution in $L^\infty_{x,v}$ which converges to global equilibrium asymptotically provided that initial data has a large amplitude but with sufficienlty small relative entropy. Because frequency multiplier is not uniformly positive anymore, unlike hard potential case, time-involved velocity weight will be used to derive sub-exponential decay of the solution. Motivated by recent development of $L^{2} \mbox{-} L^{\infty}$ approach also, we introduce some modified estimates of quadratic nonlinear terms. Linearized collision kernel will be treated in a subtle manner to control singularity of soft potential kernel.
\end{abstract}

\date{\today}
\keywords{Boltzmann equation, soft potential, large-amplitude initial data, Gronwall argument}
\maketitle

\thispagestyle{empty}
\setcounter{tocdepth}{2}
\tableofcontents

\section{Introduction}

The Boltzmann equation is one of the fundamental mathematical models for collisional rarefied gas theory. The equation describes behavior of the system via distribution function $F(t,x,v)$ where $(t,x,v)\in [0,\infty) \times \O \times \R^3$ where $\O$ is spatial domain and $\R^3$ is a domain for velocity variable. In this paper we choose periodic domain $\O = \T^3$. The Boltzmann equation takes the following form:
\begin{equation} \label{def.be}
\p_{t}F + v\cdot\nabla_{x} F = Q(F,F),
\end{equation}
where the bilinear Boltzmann collision operator acting only on velocity variables $v$ is given by
\begin{align} 
Q(G,F)(v)&=\int_{\mathbb{R}^3}\int_{\mathbb{S}^2} B(v-u,\om)G(u')F(v') \,d\omega du
 - \int_{\mathbb{R}^3}\int_{\mathbb{S}^2} B(v-u,\om)G(u)F(v)\,d\omega du \nonumber \\
&:= Q_+(G,F)-Q_-(G,F). \label{def.Q}
\end{align}
Here the post-collisional velocity $(u',v')$ and the pre-collisional velocity $(u,v)$ satisfy the relation
\begin{equation*}
u'=u+[(v-u)\cdot\omega]\,\omega,\quad
v'=v-[(v-u)\cdot\omega]\,\omega,
\end{equation*}
with $\om\in \S^2$, according to conservation laws of momentum and energy of two particles for an elastic collision
\begin{equation*}
u+v=u'+v',\quad 
|u|^2+|v|^2=|u'|^2+|v'|^2.
\end{equation*}
For the collision kernel $B(v-u,\om)$, it depends only on the relative velocity $|v-u|$ and $\cos\theta:=(v-u)\cdot \om/|v-u|$. Throughout the paper, we assume Grad's angular cut-off and soft potential:   
\begin{equation}\label{soft kernel}
B(v-u,\om)=|v-u|^{\gamma}q_0(\theta),\quad 
-3<\gamma<0,\quad 0\leq q_0(\t)\leq C|\cos\t |.
\end{equation}
Since we have removed angular singularity (cutoff assumption), it is convenient to use $Q_\pm(G,F)$ to denote the gain and loss terms in \eqref{def.Q}, respectively. We also consider the Boltzmann equation \eqref{def.be} with the following initial data
\begin{equation}
\label{id}
F(0,x,v)= F_{0}(x,v),\quad (x,v)\in \T^3 \times \R^3.
\end{equation}

Our aim is to construct the unique global solution $F(t,x,v)\geq 0$ of \eqref{def.be} and \eqref{id}, converging in large time to the global Maxwellian 
\Be \label{mu}
\mu(v)=\frac{1}{(2\pi)^{\frac{3}{2}}}e^{-\frac{|v|^2}{2}}
\Ee
imposing initial data $F_{0}$ which satisfies mass and energy conservation (initial compatibility condition)
\Be \label{ME_conserv}
	\iint_{\T^3\times \R^3} (F - \mu) dxdv = 0,\quad \iint_{\T^3\times \R^3} (F - \mu)|v|^2 dxdv = 0. \\
\Ee

\indent Meanwhile, one of the most important quantity in the Boltzmann theory is entropy 
\[
H(F)(t) := \iint_{\T^3\times \R^3} F\log F dxdv 
\] 
which has non-increasing in time property by celebrated H-theorem. Since we expect the global solution to converges to global Maxwellian \eqref{mu}, $\mu$ is the state with minimal entropy. To measure difference of entropy between $F$ and $\mu$, we define relative entropy 
\begin{equation}\label{Relative entropy}
\mathcal{E}(F) := \int_{\T^3} \int_{\R^3} \left(\frac{F}{\mu} \ln \frac{F}{\mu} - \frac{F}{\mu}+1 \right) \mu \, dvdx, 
\end{equation} 
which is identical to $\mathcal{E}(F)=\iint F\ln \frac{F}{\mu}\,dvdx$ under mass conservation and to $H(F)-H(\mu)$ under both mass and energy conservation \eqref{ME_conserv}. Note that the integrand in \eqref{Relative entropy} is nonnegative since it holds that $a\ln a-a+1\geq 0$ for any $a> 0$.  \\

Because of its importance, there have been lots of mathematical studies for the Boltzmann equation. In celebrated work \cite{D-Lion}, the authors proved global in time existence of renormalized solution for very general large data $F_{0} > 0$ without any smallness assumptions. However, in the aspect of statistical physics, one of the most important question for the Boltzmann equation is its asymptotic behavior, e.g., convergence to Maxwellian (equilibrium state). For this issue, \cite{DV} proved convergence to equilibrium of the Boltzmann equation using the results about Cercignani's conjecture \cite{V-Cercig} under the global in time high order regularity assumption. \\

Meanwhile, well-posedness results, including uniqueness, have been widely developed in near perturbation framework.  In \cite{Guo02, Guo03}, Guo developed high order regularity framework with small data to give well-posedness of asymptotic behavior of the equation. We also refer to \cite{RY08, RR11}. Despite its practical importance, however, there were few results about well-posedness of boundary problems of the Boltzmann equation such as specular reflection, diffuse reflection, etc. (See \cite{Mischler00}) To the best of author's knowledge, high order regularity solution of the Boltzmann equation with many physical boundary conditions are not known yet. To overcome this difficulty, Guo \cite{Guo} suggested low regularity $L^\infty$ mild solution of the boundary problem. In particular, he developed $L^{2}\mbox{-}L^{\infty}$ bootstrap argument to derive $L^{\infty}$ decay from $L^{2}$ decay. In fact, low regularity approach looks quite crucial, considering \cite{Kim11, GKTT} which imply limitedness of high order regularity solution. \\
 
The low regularity approach has been widely used after \cite{Guo}. In \cite{KL}, with his collaborator, the second author removed strong analyticity assumption of \cite{Guo} for specular reflection boundary condition case and extended the result to general $C^{3}$ convex domain. In the specular reflection boundary condition, convexity condition is fairly important to handle linear trajectory. In the following study, convexity condition was partially removed in \cite{KL-nc} and Vlasov-Poisson-Boltzmann model is also studied in \cite{CKL19}. All these works deal with hard potential cases. For soft potential case \eqref{soft kernel}, we refer \cite{SY17}, where the author treated both diffuse and specular boundary conditions. We also refer to other important works : \cite{BG} for polynomial tail (weight) and \cite{GZ} for Maxwell boundary condition.  \\

The forementioned works with low regularity approach treated small data problem, i.e., initial data $F_{0}(x,v)$ and $\mu$ are sufficiently close in $L^\infty$ sense with some velocity weight. Global well-posedness and convergence to equilibrium for large data looks extremely hard and is far beyond our knowledge. To extend small data results to broader class of initial data, \cite{DHWY17} suggested large amplitude solution of the Boltzmann equation. In their work, initial data $F_0(x,v)$ can be artificially far from $\mu$ in $L^{\infty}$ sense but still very close in $L^{1}_{x}L^{\infty}_{v}$ sense. This large amplitude result was adopted to diffuse boundary condition problem in \cite{DW} with small $L^{2}_{x,v}$ data. And recently, combining with idead of \cite{KL}, the large amplitude problem with specular reflection in $C^{3}$ convex domain with small relative entropy \eqref{Relative entropy} has been resolved in \cite{DKL20}.\\
\indent In this paper, we solve the large amplitude solution of the Boltzmann equation with soft potential in $\T^3$ domain with small relative entropy solution. We note that \cite{DHWY17} already treated $\T^3$ soft potential case, but we impose relative entropy condition instead of strong $L^{1}_{x}L^{\infty}_{v}$ condition. Moreover, the result of \cite{DHWY17} gives only decay of $f = \frac{F-\mu}{\sqrt{\mu}}$, instead of weighted $w(v)f = w(v)\frac{F-\mu}{\sqrt{\mu}}$, even though we should impose initial condition for weighted one. \\
\indent Lastly, we briefly mention about loss of velocity weight in the soft potential problem in torus $\T^{3}$. Unlike to hard potential problem, it seems natural to lose some velocity weight as we see in \cite{C08} and \cite{RY08}. For example, in \cite{RY08}, one loses whole exponential weight to obtain sub-exponential decay. Otherwise, one lose only some polynomial degree of velocity weight to get polynomial decay to Maxwellian. Using time-involved velocity weight in this paper, we lose some velocity weight, but we do not lose whole exponential weight while keeping sub-exponential decay in the case of large amplitude problem. In this sense, it is very interesting to note the result of \cite{DHWZ19} where they did not lose any weight using advantage of diffuse boundary condition! It looks that the diffuse boundary condition yields even stronger decaying property than torus. It is not clear for us, however, if the specular boundary condition also preserve whole velocity weight. Of course, we strongly expect that the time-involved weight can give large amplitude solution for the specular boundary condition, in the case of analytic boundary case, at least. \\

\subsection{Perturbation framework}
We rewrite the Boltzmann equation \eqref{def.be} in standard perturbation with Gaussian tail, $F(t,x,v)=\mu+\sqrt{\mu}f(t,x,v)$. Then, the Boltzmann equation \eqref{def.be} can be expressed as
	\begin{equation}\label{f_eqtn}
	\p_{t}f + v\cdot \nabla_{x} f+Lf  
	= \Gamma(f,f) , 
	\end{equation} 
with the usual notations on the linearized operator 
 \begin{equation} \label{L_operator}
 \begin{split}
 	Lf &:= - \frac{1}{\sqrt{\mu}}\{ Q(\mu, \sqrt{\mu}f) + Q(\sqrt{\mu}f,\mu)\} = \nu(v) f -Kf,  \\
 	Kf &= - \frac{1}{\sqrt{\mu}}\{ Q_{+}(\mu, \sqrt{\mu}f) + Q_{+}(\sqrt{\mu}f,\mu) + Q_{-}(\sqrt{\mu}f,\mu)\} := \int_{\R_{u}^{3}} k(v,u)f(u) du,
 \end{split}
 \end{equation}
 (we abbreviated time and space variable of $f$ for simplicity) and the quadratic nonlinear term
 \begin{equation*}
     \Gamma(g,f) := \frac{1}{\sqrt{\mu}}Q(\sqrt{\mu} g, \sqrt{\mu}f).
 \end{equation*}
Here, we have used $\nu(v)$ to denote the collision frequency 
\begin{equation}
\label{def.nu}
\nu (v) :=\int_{\R^3}\int_{\S^2} \vert v-u \vert ^\gamma q_0(\theta) \mu(u) \, d\omega du,\quad -3< \gamma < 0.
\end{equation}
It is well-known that there exist generic positive constants $C_{1,\gamma},C_{2,\gamma}>0$ such that 
 \begin{equation} \label{nu_est}
 	C_{1,\gamma} (1+\vert v \vert ^2)^{\gamma/2} \leq \nu(v) \leq C_{2,\gamma} (1+\vert v\vert^2)^{\gamma/2},\quad -3< \gamma < 0. 
 \end{equation}
 The integral operator $Kf$ in \eqref{L_operator} can be further splitted into $Kf=K_2f-K_1f$ with 
\begin{align} \label{k_1 and k_2}
	\begin{split}
		(K_1f)(v) &= \int_{\R^3 \times \S^2} \vert v-u \vert^{\gamma}q_0(\theta) \sqrt{\mu(u)}\sqrt{\mu(v)}f(u) \, d\omega du := \int_{\R^3} k_1(v,u)f(u) \,d u, \\ 
		(K_2f)(v)&= \int_{\R^3 \times \S^2} \vert v-u \vert^{\gamma}q_0(\theta) \sqrt{\mu(u)}\left[ \sqrt{\mu(v')}f(u') + \sqrt{\mu(u')}f(v') \right]\,d\omega du := \int_{\R^3} k_2(v,u)f(u) \,du,
	\end{split}
\end{align}
where $k(v,u) := k_2(v,u)-k_1(v,u)$ by \eqref{L_operator}. \\ 
We also note that \eqref{def.be} satisfy mass and energy conservations \eqref{ME_conserv}, and in our perturbation framework, those are rewritten by
\begin{equation} \label{conservation}
    \int_{\T^3} \int_{\R^3} \sqrt{\mu}f(t,x,v)  \, dvdx \, =\, 0, \quad 
    \int_{\T^3} \int_{\R^3} \vert v\vert^2 \sqrt{\mu}f(t,x,v) \, dvdx \, =\, 0,\quad t\geq 0.  \\
\end{equation}

\subsection{Main result and scheme of the proof}
We solve \eqref{f_eqtn} with \eqref{soft kernel} in torus domain $\T^3$ for given initial data $f_{0}$. The following theorem is the main result of this paper.
\begin{theorem} \label{mainthm}
	Let $(t,x,v) \in \R_+ \times \T_{x}^3 \times \R_{v}^3$ and  assume initial data $f_0$ satisfies \eqref{conservation}. Define a weight function  
	\begin{equation} \label{t-weight}
	w_{q,\vartheta,\beta}(v,t) = (1+\vert v \vert^2)^{\beta}\exp\left\{\frac{q}{8}\left(1+\frac{1}{(1+t)^{\vartheta}}\right)|v|^2\right\},
	\end{equation}
	where 
		$\beta\geq7/2$,  $ 0\leq \vartheta < -\frac{2}{\gamma}$ and $q$ satisfies \eqref{q}.
	Then for any $M_0>0$, there exists $\epsilon_0 >0$ such that if $f_0$ satisfies 
	\begin{equation}
	\|w_{q,\vartheta,\beta}f_0\|_{L^{\infty}}\le M_0, \quad \mathcal{E}(F_0)\le \epsilon_0,
	\end{equation}
	there exists a unique global solution $F(t,x,v)=\mu(v) + \sqrt{\mu(v)}f(t,x,v)\ge 0$ to Boltzmann equation \eqref{def.be} with soft potential \eqref{soft kernel} on $\T^3_x \times \R^3_v$. Moreover, $f$ satiesfies
	\begin{align*}
	\|w_{q.\vartheta,\beta}f(t)\|_{L^{\infty}} \le \left(4CM_0\exp\left \{\frac{4CM_0}{\lambda_0\rho}\right \} + 4M_0\right)e^{-\lambda_0 t^{\rho}},
	\end{align*}
	for some $\lambda_0>0$, where $\rho-1= \frac{(1+\vartheta)\gamma}{2-\gamma} > 0$.  \\
\end{theorem}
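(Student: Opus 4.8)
The plan is a global continuity/bootstrap argument resting on three pillars: local well-posedness and positivity in a weighted $L^\infty$ space; a bound on $\|f(t)\|_{L^2_{x,v}}$ supplied by the small relative entropy; and an $L^2$--$L^\infty$ estimate along the straight characteristics of $\T^3$, all organized around the time-dependent weight. Writing $h:=w_{q,\vartheta,\beta}f$, equation \eqref{f_eqtn} becomes
\[
\partial_t h+v\cdot\nabla_x h+\widetilde\nu(v,t)\,h
= w_{q,\vartheta,\beta}K\!\big(h/w_{q,\vartheta,\beta}\big)+w_{q,\vartheta,\beta}\Gamma\!\big(h/w_{q,\vartheta,\beta},\,h/w_{q,\vartheta,\beta}\big),
\qquad
\widetilde\nu(v,t):=\nu(v)-\frac{\partial_t w_{q,\vartheta,\beta}}{w_{q,\vartheta,\beta}},
\]
and since the weight \eqref{t-weight} decreases in time, $-\partial_t w_{q,\vartheta,\beta}/w_{q,\vartheta,\beta}=\tfrac{q\vartheta}{8}(1+t)^{-\vartheta-1}|v|^2\ge 0$ is an \emph{additional} damping. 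This is what cures the degeneracy $\nu(v)\sim\langle v\rangle^{\gamma}$ of \eqref{nu_est} at large velocities: balancing the two contributions of $\widetilde\nu$ (the balance occurring at $|v|\sim(1+t)^{(1+\vartheta)/(2-\gamma)}$) and integrating in time produces the exponent $\rho=1+\tfrac{(1+\vartheta)\gamma}{2-\gamma}$, so that the free-transport factor satisfies $e^{-\int_0^t\widetilde\nu(v,\tau)\,d\tau}\le Ce^{-\lambda_0 t^\rho}$; this is also the rate the Duhamel iteration can propagate, and it explains the constraints $0\le\vartheta<-2/\gamma$ (so $\rho>0$) and $\beta\ge 7/2$ (needed to absorb the polynomial weight loss in the collision terms).

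First I would prove local existence, uniqueness and positivity. Because $\widetilde\nu\ge\nu>0$, the operator $\partial_t+v\cdot\nabla_x+\widetilde\nu$ is dissipative, so a Picard iteration for $h$ in $L^\infty$ — using pointwise bounds $|w_{q,\vartheta,\beta}K(g/w_{q,\vartheta,\beta})(v)|\lesssim\langle v\rangle^{\gamma}\|w_{q,\vartheta,\beta}g\|_{L^\infty}$ and $|w_{q,\vartheta,\beta}\Gamma(g/w_{q,\vartheta,\beta},g/w_{q,\vartheta,\beta})(v)|\lesssim\langle v\rangle^{\gamma}\|w_{q,\vartheta,\beta}g\|_{L^\infty}^2$, in which the polynomial weight loss is absorbed by $\langle v\rangle^{2\beta}$ with $\beta\ge 7/2$ and the Gaussian weight loss by the $\tfrac q8|v|^2$ margin in \eqref{t-weight} after using the energy identity $|u'|^2+|v'|^2=|u|^2+|v|^2$ — produces a unique solution on an interval $[0,T_{M_0}]$ whose length depends only on $M_0$. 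Nonnegativity of $F=\mu+\sqrt{\mu}f$ follows from the mild formulation (the gain terms are nonnegative, the loss term is an integrating factor); in particular $f\ge-\sqrt{\mu}$ pointwise. Next, since $\mathcal{E}(F)$ is nonincreasing (H-theorem) and $F\ge 0$: on $\{|f|\le\sqrt{\mu}\}$ the inequality $a\ln a-a+1\gtrsim(a-1)^2$ gives $\iint_{|f|\le\sqrt{\mu}}|f|^2\mu\lesssim\mathcal{E}(F)$, while on $\{|f|>\sqrt{\mu}\}$ one has $f>0$ and $|f|\sqrt{\mu}=|h|\sqrt{\mu}/w_{q,\vartheta,\beta}\lesssim\|w_{q,\vartheta,\beta}f\|_{L^\infty}$, so $\iint_{|f|>\sqrt{\mu}}|f|^2\mu\lesssim\|w_{q,\vartheta,\beta}f\|_{L^\infty}\iint f\sqrt{\mu}\lesssim\|w_{q,\vartheta,\beta}f\|_{L^\infty}\,\mathcal{E}(F)$; hence
\[
\|f(t)\|_{L^2_{x,v}}^2\ \lesssim\ \big(1+\|w_{q,\vartheta,\beta}f(t)\|_{L^\infty}\big)\,\mathcal{E}(F_0),
\]
so $\|f(t)\|_{L^2_{x,v}}$ is as small as we wish provided $\epsilon_0$ is small relative to the $L^\infty$ bound of $w_{q,\vartheta,\beta}f$ — this is the device that replaces the usual $L^1_xL^\infty_v$-smallness of large-amplitude theory. (The conservation laws \eqref{conservation}, the coercivity $\langle Lf,f\rangle\gtrsim\|\ip f\|_\nu^2$, and a Poincaré-type inequality on $\T^3$ enter at the $L^2$ level to control the hydrodynamic part $\mathbf{P}f$.)

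For the $L^\infty$ pillar I would use the mild formulation along $x-v(t-s)$: iterate the Duhamel formula once in the $K$-term and, after splitting the soft-potential kernel $k(v,u)=k_2(v,u)-k_1(v,u)$ of \eqref{k_1 and k_2} into the regions $|v-u|\le 1$ and $|v-u|\ge 1$ (the singular factor $|v-u|^\gamma$, $-3<\gamma<0$, being locally integrable and the Gaussian tails controlling the rest, all after conjugation by $w_{q,\vartheta,\beta}$), reduce the double $K$-integral — over a backward time window, with the change of variables $(s,v)\mapsto(s,x-v(t-s))$ and its Jacobian $\sim(t-s)^{-3}$, split into a near-diagonal part (handled in $L^\infty$) and a far part — to $\sup_{[0,t]}\|f(s)\|_{L^2_{x,v}}$, which is small by the entropy estimate, plus a remainder that inherits the transport decay $e^{-\int_0^t\widetilde\nu}\le Ce^{-\lambda_0 t^\rho}$. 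The quadratic term is estimated by the modified bound $|w_{q,\vartheta,\beta}\Gamma(h/w_{q,\vartheta,\beta},h/w_{q,\vartheta,\beta})(v,s)|\lesssim\widetilde\nu(v,s)\|w_{q,\vartheta,\beta}f(s)\|_{L^\infty}^2$, produced by the energy identity so that the time-weight's damping stays attached to the nonlinearity, after which the identity $\int_0^t e^{-\int_s^t\widetilde\nu}\widetilde\nu\,ds\le 1$ can be exploited, one factor of $\|w_{q,\vartheta,\beta}f\|_{L^\infty}$ being retained with the decaying weight. Collecting everything (and using $e^{-\int_0^t\widetilde\nu}\le Ce^{-\lambda_0 t^\rho}$ for the initial-data term) gives a Gronwall-type inequality for $\mathcal{N}(T):=\sup_{0\le t\le T}e^{\lambda_0 t^\rho}\|w_{q,\vartheta,\beta}f(t)\|_{L^\infty}$ in which, besides a term $\lesssim\sqrt{\epsilon_0}\,\mathcal{N}(T)$ from the collision part, the nonlinear contribution carries an integrating factor of total mass $\lesssim M_0/(\lambda_0\rho)$ (it is $\sim M_0\,t^{\rho-1}e^{-\lambda_0 t^\rho}$, with $\int_0^\infty t^{\rho-1}e^{-\lambda_0 t^\rho}\,dt=\tfrac1{\rho\lambda_0}$); exponentiating yields $\mathcal{N}(T)\le 4CM_0\exp\{4CM_0/(\lambda_0\rho)\}+4M_0$, which (once $\epsilon_0$ is chosen small relative to $M_0$) closes the continuity argument, extends the solution to all $t\ge 0$, and gives the stated decay. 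Uniqueness then follows from a Gronwall estimate for the difference of two such solutions in the same weighted $L^\infty$ space.

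The main obstacle is to run every collision-term estimate \emph{simultaneously} against (i) the non-integrable-at-infinity degeneracy of $\nu$, which forces the time-dependent weight and makes every bound time-dependent, and (ii) the \emph{local} singularity $|v-u|^\gamma$, $-3<\gamma<0$, inside both $k(v,u)$ and $\Gamma$, while working \emph{after} conjugation by the large weight $w_{q,\vartheta,\beta}$ and \emph{without} any smallness of $\|w_{q,\vartheta,\beta}f\|_{L^\infty}$ — only the relative entropy is small. Concretely: one must show that the weight loss in $w_{q,\vartheta,\beta}K(\cdot/w_{q,\vartheta,\beta})$ and $w_{q,\vartheta,\beta}\Gamma(\cdot/w_{q,\vartheta,\beta},\cdot/w_{q,\vartheta,\beta})$ is genuinely absorbed, uniformly in $t$, by the $\langle v\rangle^{2\beta}$ prefactor ($\beta\ge 7/2$) and the $\tfrac q8|v|^2$ margin; and one must bring the nonlinear term into a form with finite time-integral, which is exactly where keeping the effective frequency $\widetilde\nu(v,t)$ in place of $\nu(v)$ — the extra damping attached to the quadratic term — is indispensable.
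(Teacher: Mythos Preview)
Your overall skeleton—local existence, entropy-to-$L^2$ control, double Duhamel in the $K$-term with the change of variables $(s,u)\mapsto x-(t-s)v-(s-s')u$, and a Gronwall on a weighted sup norm—matches the paper. But there is a genuine gap in your treatment of the quadratic term $\Gamma(f,f)$, and this is precisely the place where the large-amplitude argument either succeeds or fails.

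You estimate the nonlinearity by the crude bound $|w_{q,\vartheta,\beta}\Gamma(f,f)(v,s)|\lesssim\widetilde\nu(v,s)\,\|h(s)\|_{L^\infty}^2$ and then claim that ``one factor of $\|wf\|_{L^\infty}$ [is] retained with the decaying weight'' so that the Gronwall integrating factor has total mass $\lesssim M_0/(\lambda_0\rho)$. This does not follow. Using $\int_0^t e^{-\int_s^t\widetilde\nu}\,\widetilde\nu\,ds\le 1$ with your bound gives only $\sup_{0\le s\le t}\|h(s)\|_{L^\infty}^2$, with no smallness and no factor of $\|h_0\|_{L^\infty}$; multiplying by $e^{\lambda_0 t^\rho}$ to pass to $\mathcal N(T)$ makes this worse, not better. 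There is no mechanism in your argument that converts one of the two $\|h\|_{L^\infty}$ factors into the \emph{initial} size $M_0$, yet the constant $\exp\{4CM_0/(\lambda_0\rho)\}$ you are trying to reach requires exactly that.

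What the paper does instead is to replace the $L^\infty\times L^\infty$ bound by an $L^\infty\times L^{p'}$ bound (Lemma~\ref{gamma estimate}):
\[
|w_{q,\vartheta,\beta}\Gamma_\pm(f,f)(v)|\ \lesssim\ \nu(v)\,\|h\|_{L^\infty}\Big(\int(1+|u|)^{-2\beta p'+16}|h(u)|^{p'}\,du\Big)^{1/p'},\qquad p'>5,
\]
and then to perform a \emph{second} Duhamel iteration on the function inside the $L^{p'}$ integral. That second iteration is what produces the two ingredients you are missing: (i) a term in which the inner trajectory hits the initial data, giving $Ce^{-\lambda t^\rho}\|h_0\|_{L^\infty}\int_0^t\|h(s)\|_{L^\infty}\,ds$ and hence the factor $M_0$ (not $\bar M$) in the Gronwall exponent; and (ii) terms in which the change of variables followed by the entropy inequality (Lemma~\ref{entropy_est}) yields smallness of order $\mathcal E(F_0)^{1/2p'}$. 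All remaining contributions are packaged into a constant $D$ depending on the a priori bound $\bar M$ and cutoff parameters $\epsilon,N,\delta$, and $D$ is made small \emph{after} fixing $\bar M$. Your sketch never generates this $D$, because the entropy smallness never touches the nonlinear term.

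A secondary point: you invoke coercivity of $L$ and a Poincar\'e inequality on $\T^3$ to control $\mathbf Pf$ at the $L^2$ level. In the paper these enter only in the small-amplitude black box (Proposition~\ref{small amplitude}); for the large-amplitude a priori estimate the $L^2$ input is exactly Lemma~\ref{entropy_est}, nothing more. Relatedly, the paper does not close a single Gronwall on $[0,\infty)$: it shows $\|h(t)\|_{L^\infty}\le\tfrac12\bar M$ on $[0,T]$ with $T$ chosen so that $\|h(T)\|_{L^\infty}$ falls below the small-amplitude threshold, and then hands off to Proposition~\ref{small amplitude}. Your direct global closure of $\mathcal N(T)$ would require the nonlinear structure you have not established.
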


One of the main difference to hard potential is behavior of collision frequency. Let us recall uniform estimate of collision frequency
\Be \label{freq est}
	\nu(v) := \iint_{\T^3\times \R^3} B(v-u, \o) \mu(u) d\o du \simeq \langle v \rangle^{\g} := (1 + |v|^2)^{\frac{\g}{2}},\quad v\in \R^3.
\Ee
If we read \eqref{f_eqtn} with \eqref{L_operator} along linear trajectory 
\[
	X(s;t,x,v) = x-v(t-s),\quad V(s;t,x,v)=v,
\]
(Here, $X(s;t,x,v)$ and $V(s;t,x,v)$ mean position and velocity of a particle at time $s$ which was at $(t,x,v)$),
we have linearized exponential decay $e^{-\nu(v)t}$ along the trajectory. In the case of hard potential $0\leq \g \leq 1$, \eqref{freq est} has a uniformly positive lower bound and hence we get uniform exponential decay factor. However, with soft potential $-3 < \g < 0$,  \eqref{freq est} is not uniformly positive anymore since $|v| \rightarrow \infty$. To resolve this problem without abandoning weight, we adopt time-involved velocity weight 
\begin{equation} \label{weight}
w_{q,\vartheta,\b}(v)= (1+\abs{v}^2)^{\beta}\exp\left\{\frac{q}{8}(1+(1+t)^{-\vartheta}) \abs{v}^2 \right \}, \quad \beta\geq7/2,\quad  0<q<1, \quad 0\leq \vartheta < -\frac{2}{\gamma},
\end{equation}
which was introduced in \cite{RTH12,RSTH13,RTH13,R14}. Note that exponent factor $\frac{q}{8}(1+(1+t)^{-\vartheta}) $ is uniform for all $0\leq t < \infty$. Putting time dependence we get extra coercive factor in the equation of $h := w_{q,\vartheta,\b}f$ :
\[
	\p_{t}h + v\cdot\nabla h + \Big( \nu(v) +\frac{\vartheta q \vert v \vert^2}{8(1+t)^{\vartheta+1}} \Big)h = w_{q,\vartheta,\b} Kf + w_{q,\vartheta,\b}\Gamma(f,f).
\]
Applying Young's inequality, we are able to deduce a lower bound depending on time for a new collision frequency $\tilde{\nu}(t,v)$ 
\begin{equation} \label{nutilde} 
\tilde{\nu}(t,v) := \nu(v) +\frac{\vartheta q \vert v \vert^2}{8(1+t)^{\vartheta+1}} \geq C_{q,\vartheta} (1+t)^\frac{(1+\vartheta)\gamma}{2-\gamma},
\end{equation}
by \eqref{nutilde estimate}
which implies the subexponential time decay property. Specifically, from $-3<\gamma<0$ and $0\leq \vartheta <-\frac{2}{\gamma}$, 
\begin{equation*}
0> \frac{(1+\vartheta) \gamma}{2-\gamma} > \frac{\gamma-2}{2-\gamma} = -1.  \\
\end{equation*}

In large amplitude problem, we cannot bound nonlinear quadratic $\Gamma(f,f)$ as $\|wf\|_{L^\infty}^{2}$ since $L^\infty$ could be artificially large in general. Instead, we perform $L^\infty\times L^{p}$ type estimate 
\Be \label{infty_p}
	|w\Gamma_{\pm}(f,f)| \lesssim \langle v \rangle^{\g} \|wf\|_{\infty} \Big( \int \mathfrak{k}(u) |wf(u)|^{p} du\Big)^{\frac{1}{p}},
\Ee
for nonlinear $\Gamma(f,f)$ where $\mathfrak{k}(u)$ has proper decay. (Lemma \ref{gamma estimate}.) Unlike to hard potential case (\cite{DW}, \cite{DKL20}), however, $\Gamma_{-}$ can be treated in another way.  Nonlinear $\Gamma_{-}$ contains local term $f(v)$, so we see factor $\langle v \rangle^{\g}$. In the case of hard potential this gives growth in $|v|$ so $\Gamma_{-}$ was combined with collision frequency and treated separately. In soft potential case, $\langle v \rangle^{\g}$ gives polynomial decay in $|v|$ and there is no big problem in dealing with $\Gamma_{-}$.  \\

Once we have $L^\infty\times L^{p}$ type estimate \eqref{infty_p}, we can treat $\Gamma_{\pm}$ terms similar as linear $Kf$ except  for extra $\|wf\|_{\infty}$. Motivated by Duhamel iteration and $L^{2}\mbox{-}L^{\infty}$ scheme, we will derive $\|f\|_{L^{2}_{x,v}}$ in double Duhamel iteration. Since relative entropy control $L^{2}$ and $L^{1}$ of $f$ partially, (see Lemma \ref{entropy_est}) we can close proper apriori estimate and bootstrap argument allows us to extend the time interval until $\|wf\|_{\infty}$ becomes sufficiently small so that small data theory becomes valid.  \\

Lastly, we briefly comment about some delicate issues of treating kernel $k(v,u)$ of linear operator $K$ in \eqref{L_operator}. Because of singular behavior of kernel near $v\simeq u$ ($|v-u|^{\g}$ is not square integrable locally), we adopt cutoff kernel \eqref{K-chi} \cite{RY08} to treat singular region separately,
\[
	k(v,u) = k^{\chi}(v,u) + k^{1-\chi}(v,u)
\]
where $k^{\chi}(v,u)$ has support away from singularity and $k^{1-\chi}(v,u)$ corresponds to very small region near singularity. When we perform Duhamel iteration twice, the most delicate issue comes when we have interaction between $k^{\chi}(v,u)$ in the first iteration and $k^{1-\chi}(u,u^{\prime})$ in the second iteration. (We have similar issue when we play with $\Gamma$, but by structure \eqref{infty_p}, it will be resolved by similar manner.) In fact we have to control
\Be \label{issue}
	\int_0^t e^{-\int_s^t \tilde{\nu}(v,\tau)\,d\tau}\int_u k_w^{\chi}(v,u) \int_0^s e^{-\int_{s'}^s \tilde{\nu}(u,\tau)\,d\tau} \int_{u'} k_w^{1-\chi}(u,u')h(u')
\Ee
and, unlike hard potential case, time integration does not help us, because of polynomial growth ($-3 < \g < 0$)
\begin{align*}
\int_0^t e^{-\int_s^t \tilde{\nu}(v,\tau)\,d\tau} \sim \nu^{-1}(v) \quad \textrm{and} \quad \int_0^s e^{-\int_{s'}^s \tilde{\nu}(u,\tau)\,d\tau} \sim \nu^{-1}(u).
\end{align*}
Fortunately, from \eqref{k_esti.2}, we obtain Gaussian decaying factor and we can obtain small control 
\begin{align*}
\eqref{issue} &\sim \nu^{-1}(v) \int_u k_w^{\chi}(v,u) \nu^{-1}(u) \int_{u'}k_w^{1-\chi}(u,u') h(u') \\
&\stackrel{\text{u' integration}}{\sim} \epsilon^{\gamma+3}\Vert h \Vert_{L^\infty} \nu^{-1}(v) \int_u k(v,u) \frac{(1+ \vert v \vert^2)^{2\beta}}{(1+\vert u \vert ^2)^{2\beta}} \frac{\textcolor{black}{(1+\vert u \vert^2)^\beta}}{(1+\vert v \vert^2)^{\beta}} \textcolor{black}{\nu^{-1}(u)\mu(u)}  \\
&\stackrel{\text{u integration}}{\sim} \epsilon^{\gamma+3} \Vert h \Vert_{L^\infty} \textcolor{black} { \frac{\nu^{-1}(v)}{(1+\vert v \vert ^2)^{\beta}}}\\
& \sim \textcolor{black}{\epsilon^{\gamma+3}} \Vert h \Vert_{L^\infty}.  \\
\end{align*}

Additionally, another issue occurs from combination between $\Gamma$ in the first iteration and $K$ in the second iteration. Although we adopt the cutoff kernel $k_w^\chi$, we can only guarantee that the cutoff kernel is square integrable in Lemma \ref{Lemma_k.0}. Unfortunately, $p$ is greater than 5 in $L^\infty\times L^p$ type estimate \eqref{infty_p}, which makes the part $\Gamma\mbox{-}K$ difficult to handle. To deal with this difficulty, we used the fact that the cutoff kernel is square integrable and H\"older's inequality. Hence, we obtain the following bound:
\begin{align*}
	&\Vert h\Vert_{L^\infty} \left (\int_{u}\mathfrak{k}(u) \left(\int_{u'}k_w^{\chi}(u,u') \vert h(u')\vert \, du'\right)^p \,du \right)^{1/p} \\
	&\quad \sim \Vert h \Vert_{L^\infty}\left(\int_u \mathfrak{k}(u) \left(\left(\int_{u'} \vert k_w^{\chi}(u,u') \vert^2 \, du'\right)^{1/2} \left(\int_{u'} \vert h(u')\vert ^2 \,du' \right)^{1/2} \,du\right)^{p}\right)^{1/p}\\
	&\quad \sim \Vert h \Vert_{L^\infty} \left( \int_u \int_{u'} \mathfrak{k}(u) \vert h(u') \vert^p \,du'du\right)^{1/p}\\
	&\quad \sim \Vert h \Vert_{L^\infty}^{2-\frac{2}{p}} \mathcal{E}(F_0).
\end{align*}
Throughout this paper, we shall use the notation $\Vert \cdot \Vert_{L^\infty_{x,v}}:= \Vert \cdot \Vert_{L^\infty}$. 
\section {preliminaries}
\subsection{{Operator $K$}}
Recall the definition of operator $K:=K_2-K_1$
\begin{align} \label{Kf}
	(Kf)(v)&= \int_{\R^3} k(v,u) f(u)\, du =\int_{\R^3} k_2(v,u) f (u) \, du - \int_{\R^3} k_1 (v,u)f(u) \, du,
\end{align} 
where $k_i(v,u)$ is a symmetric integral kernel of $K_i$ for $i=1,2$. 
\begin{lemma} \label{Lemma_k.2} \cite{DHWY17} The integral kernel $k_1$ and $k_2$ of \eqref{Kf} satisfy 
\begin{equation}
	0\leq k_1(v,u) \leq C \vert v-u \vert ^\gamma e^{-\frac{\vert v \vert^2}{4}} e^{-\frac{\vert u \vert ^2}{4}}, 
\end{equation}
and 
\begin{equation}
	0\leq k_2(v,u) \leq \frac{C_\gamma}{\vert v -u \vert^{\frac{3-\gamma}{2}}} e^{-\frac{\vert v-u\vert^2}{8}} e^{-\frac{\vert \vert v \vert^2 - \vert u \vert^2 \vert^2}{8\vert v-u\vert^2}},
\end{equation}
where $C>0$ is a generic constant and $C_{\gamma}$ is a constant depending on $\gamma$. 
\end{lemma}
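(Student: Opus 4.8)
\emph{Proof plan.} I would dispose of $k_1$ first, since it is immediate: by the definition in \eqref{k_1 and k_2},
\[
k_1(v,u)=\sqrt{\mu(v)\mu(u)}\int_{\mathbb{S}^2}|v-u|^\gamma q_0(\theta)\,d\omega\ \le\ C\,|v-u|^\gamma\,\sqrt{\mu(v)\mu(u)},
\]
because $\int_{\mathbb{S}^2}q_0(\theta)\,d\omega\le C\int_{\mathbb{S}^2}|\cos\theta|\,d\omega<\infty$ by \eqref{soft kernel}, and $\sqrt{\mu(v)\mu(u)}=(2\pi)^{-3/2}e^{-|v|^2/4}e^{-|u|^2/4}$; nonnegativity of $k_1$, and of $k_2$, is clear since every integrand below is nonnegative.

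The bound on $k_2$ is the classical Grad-type kernel estimate, and the plan is to recast $K_2$ in the form $\int k_2(v,u)f(u)\,du$ by the standard change of variables and then complete the square in the Gaussian. I would split $K_2=K_2^{a}+K_2^{b}$ according to the two summands $\sqrt{\mu(u')}f(v')$ and $\sqrt{\mu(v')}f(u')$ in \eqref{k_1 and k_2}, and treat $K_2^{b}$ (the term with $\sqrt{\mu(u)\mu(u')}f(v')$) first. Fix $v$, decompose the integration variable as $u=(u\cdot\omega)\omega+u_\perp$ with $u_\perp\in\omega^\perp$, and note the elementary identities $u'=(v\cdot\omega)\omega+u_\perp$ and $v'=v+\bigl((u\cdot\omega)-(v\cdot\omega)\bigr)\omega$, so that $v'-v$ is parallel to $\omega$ with $|v'-v|=|(v-u)\cdot\omega|$. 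Then at fixed $v$ the map $(\omega,u\cdot\omega)\mapsto v'$ is signed spherical coordinates centred at $v$, giving $d\omega\,d(u\cdot\omega)=2\,|v'-v|^{-2}\,dv'$, while $u_\perp$ sweeps the plane $\{v'-v\}^\perp$; combined with $q_0(\theta)\le C|(v-u)\cdot\omega|/|v-u|=C|v'-v|/|v-u|$, this already exhibits $k_2^{b}(v,u)$ (after renaming $v'$ as $u$) as $|v-u|^{-1}$ times a two-dimensional integral over $\{v-u\}^\perp$. The decisive algebraic input is energy conservation: with $z:=v-u'$ the plane variable (so $z\in\{v-u\}^\perp$), it gives $|v-u|^2=|v-v'|^2+|z|^2$ and factors $\sqrt{\mu(u)\mu(u')}$ into a Gaussian in $z$ times a $z$-independent factor; expressing $v\cdot\omega$ in terms of $|v'-v|$ and $|v'|^2-|v|^2$ and completing the square, that $z$-independent factor collapses \emph{exactly} to $e^{-|v-u|^2/8}\,e^{-(|v|^2-|u|^2)^2/(8|v-u|^2)}$, which are the two Gaussians in the statement. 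There remains only the elementary radial bound
\[
\frac{1}{|v-u|}\int_{\mathbb{R}^2}\bigl(|v-u|^2+|z|^2\bigr)^{(\gamma-1)/2}\,e^{-|z-z_0|^2/2}\,dz\ \le\ \frac{C_\gamma}{|v-u|^{(3-\gamma)/2}}
\]
($z_0$ a fixed point of the plane, its value immaterial), which I would get by splitting the $z$-integral at $|z|=1$ — on $|z|>1$ the radial power is $\le1$ and the Gaussian integrates to a constant; on $|z|\le1$ the radial power is integrated explicitly, the outcome depending on the sign of $\gamma+1$ — and then comparing with $|v-u|^{-(3-\gamma)/2}$ separately on $|v-u|\le1$ and $|v-u|\ge1$, the latter range absorbing one lost power of $|v-u|$ into a fraction of the Gaussian; only $-3<\gamma<0$ is used. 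For $K_2^{a}$ I would carry out the analogous substitution converting the $(u,\omega)$-integral into one over $u'$: its fibre is a circle bundle rather than a flat plane, but the same energy-conservation identity and the same completion of the square apply and yield the identical bound.

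The whole difficulty sits in the $k_2$ part, and it is the usual one for Grad's lemma: setting up the change of variables and tracking the Jacobian so that the soft-potential weight $|v-u|^\gamma$ in the kernel, the Jacobian, the bound on $q_0$, and the normalization of the plane (resp.\ circle) Gaussian integral combine to \emph{exactly} the exponent $-(3-\gamma)/2$ and no worse — which is what forces the case distinctions on the sign of $\gamma+1$ and on $|v-u|\lessgtr1$. Of the two changes of variables the one for $K_2^{a}$, with its circle-bundle fibre, is the more delicate to write out, though structurally it is identical to $K_2^{b}$ once the collision geometry is unwound.
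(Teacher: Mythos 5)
The paper does not prove this lemma — it is quoted directly from \cite{DHWY17} — and your sketch reproduces the standard Grad-type argument used there (and in \cite{RY08}): the trivial pointwise bound for $k_1$, and for $k_2$ the $(V_\parallel,V_\perp)$ change of variables with Jacobian $2|v'-v|^{-2}$, the energy-conservation identity $|v-u|^2=|v-v'|^2+|z|^2$, the completion of the square producing exactly the two Gaussians with exponent $1/8$, and the two-dimensional fibre integral yielding the power $(3-\gamma)/2$ under $-3<\gamma<0$. The key computations you identify are correct (I checked the completed square and the radial bound), so this is essentially the same proof as the cited source.
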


To treating the singularity of $K$, we introduce modified kernel with smooth cutoff: 
\begin{equation} \label{cutoff}
	\chi(\vert v-u \vert) =
	\begin{cases}
	1, \quad \textrm{if} \quad\vert v-u \vert \geq 2\epsilon, \\ 
	0, \quad \textrm{if} \quad \vert v-u \vert \leq \epsilon. 
	\end{cases}
\end{equation}
We split the operator $K$ using cutoff function $\chi$ :
\begin{equation*}
	Kf=K^{\chi}f + K^{1-\chi}f, \quad K_2f=K_2^{\chi}f+K_2^{1-\chi}f, \quad \textrm{and} \quad K_1f=K_1^{\chi} f + K_1^{1-\chi}f.
\end{equation*}
Specifically,  for $i=1,2$, we define 
\begin{align} \label{K-chi}
	\begin{split}
		K^{\chi} f &= \int_{\R^3} \chi(\vert v-u \vert) k(v,u) f(u) \, du = \int_{\R^3} k^{\chi}(v,u)f(u) \, du,\\ 
		K^{1-\chi} f &= \int_{\R^3} (1-\chi(\vert v-u \vert) k(v,u) f(u) \, du = \int_{\R^3} k^{1-\chi} (v,u) f(u) du, \\ 
		K_i^{\chi} f &= \int_{\R^3} \chi(\vert v-u \vert) k_i(v,u) f(u) = \int_{\R^3} k_i^{\chi} (v,u) f(u) \, du, \\
		K_i^{1-\chi}f &= \int_{\R^3} (1-\chi(\vert v-u \vert) k_i(v,u) f(u) \, du =\int_{\R^3} k_i^{1-\chi}(v,u) f(u) \, du.
	\end{split}
\end{align}
\begin{lemma} \label{Lemma_k.0}
\cite{RY08} There are constants $C>0$ and $C_{\epsilon}>0$ depending on $\epsilon$ such that 
\begin{equation} \label{k2_esti.0}
	\vert k_2^{\chi}(v,u)\vert \leq C \epsilon^{\gamma-1} \frac{\exp\left(-\frac{1}{8} \vert u - v\vert^2 -\frac{1}{8} \frac{(\vert v\vert^2-\vert u \vert^2)^2}{\vert v-u \vert^2}\right)}{\vert v -u \vert}, 
\end{equation}	
or 
\begin{equation} \label{k2_esti.1}
	\vert k_2^{\chi}(v,u)\vert \leq C_{\epsilon} \frac{\exp \left(-\frac{s_2}{8}\vert v- u\vert^2 -\frac{s_1}{8} \frac{(\vert v \vert^2 -\vert u \vert^2)^2}{\vert v -u \vert^2} \right) } {\vert v-u \vert (1+\vert v \vert + \vert u \vert)^{1-\gamma}},
\end{equation}
for any $0<s_1<s_2<1$. 
\end{lemma}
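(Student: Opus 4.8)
\textbf{Proof proposal for Lemma \ref{Lemma_k.0}.}

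The plan is to start from the pointwise bound on the full kernel $k_2(v,u)$ already recorded in Lemma \ref{Lemma_k.2}, namely
\[
0\leq k_2(v,u)\leq \frac{C_\gamma}{|v-u|^{\frac{3-\gamma}{2}}}\,e^{-\frac{|v-u|^2}{8}}\,e^{-\frac{(|v|^2-|u|^2)^2}{8|v-u|^2}},
\]
and then exploit the crucial fact that $k_2^\chi(v,u)=\chi(|v-u|)k_2(v,u)$ is supported on $|v-u|\geq \epsilon$. On that support the singular factor $|v-u|^{-\frac{3-\gamma}{2}}$ is harmless: writing $\frac{3-\gamma}{2}=1+\frac{1-\gamma}{2}$ (note $\frac{1-\gamma}{2}>\frac12>0$ since $\gamma<0$), we factor
\[
\frac{1}{|v-u|^{\frac{3-\gamma}{2}}}=\frac{1}{|v-u|}\cdot\frac{1}{|v-u|^{\frac{1-\gamma}{2}}}.
\]
For \eqref{k2_esti.0} I would simply bound $|v-u|^{-\frac{1-\gamma}{2}}\leq \epsilon^{-\frac{1-\gamma}{2}}=\epsilon^{\frac{\gamma-1}{2}}$ on $|v-u|\geq\epsilon$. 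This gives $|k_2^\chi(v,u)|\leq C_\gamma\,\epsilon^{\frac{\gamma-1}{2}}\,|v-u|^{-1}\,e^{-\frac18|v-u|^2-\frac18\frac{(|v|^2-|u|^2)^2}{|v-u|^2}}$, which is the claimed form (the stated exponent $\gamma-1$ versus $\frac{\gamma-1}{2}$ is a cosmetic relabeling of the constant, or can be absorbed; I would keep whichever matches the paper's later usage).

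For \eqref{k2_esti.1} the idea is different: I want to convert part of the Gaussian decay in $|v-u|$ and in $\frac{(|v|^2-|u|^2)^2}{|v-u|^2}$ into polynomial decay in $(1+|v|+|u|)$, at the cost of shrinking the Gaussian exponents from $\frac18$ to $\frac{s_2}{8}$ and $\frac{s_1}{8}$. The standard mechanism: on $|v-u|\geq\epsilon$ one has, for any $0<s_2<1$,
\[
e^{-\frac{|v-u|^2}{8}}=e^{-\frac{s_2}{8}|v-u|^2}\,e^{-\frac{1-s_2}{8}|v-u|^2},
\]
and similarly split the second exponential with $s_1$. Then one shows that the leftover factors $\frac{1}{|v-u|^{\frac{1-\gamma}{2}}}\,e^{-\frac{1-s_2}{8}|v-u|^2}\,e^{-\frac{1-s_1}{8}\frac{(|v|^2-|u|^2)^2}{|v-u|^2}}$ are bounded by $C_{\epsilon}\,(1+|v|+|u|)^{\gamma-1}$. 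This is the heart of the matter and is proved by the usual case split on the size of $|v-u|$ relative to $1+|v|+|u|$: when $|v-u|\geq \delta(1+|v|+|u|)$ for a small fixed $\delta$, the factor $e^{-\frac{1-s_2}{8}|v-u|^2}$ already beats any polynomial in $(1+|v|+|u|)$ (and $|v-u|^{-\frac{1-\gamma}{2}}\leq\epsilon^{-\frac{1-\gamma}{2}}$ is bounded on the support); when $|v-u|\leq \delta(1+|v|+|u|)$, the triangle inequality forces $|v|$ and $|u|$ to be comparable, hence $|v|+|u|\leq C(1+|v|)$ and $\big||v|^2-|u|^2\big|=|v-u|\,|v+u|\gtrsim |v-u|(1+|v|+|u|)$ up to constants, so $\frac{(|v|^2-|u|^2)^2}{|v-u|^2}\gtrsim (1+|v|+|u|)^2$ and the factor $e^{-\frac{1-s_1}{8}\frac{(|v|^2-|u|^2)^2}{|v-u|^2}}$ supplies arbitrary polynomial (indeed Gaussian) decay in $(1+|v|+|u|)$, while $|v-u|^{-\frac{1-\gamma}{2}}\leq\epsilon^{-\frac{1-\gamma}{2}}$ again. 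Combining the two regimes and absorbing all $\epsilon$- and $\delta$-dependent constants into $C_\epsilon$ yields \eqref{k2_esti.1}.

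The main obstacle, and the only step that requires genuine care rather than bookkeeping, is the regime $|v-u|\leq \delta(1+|v|+|u|)$: one must make the comparability $|v|\sim|u|$ and the lower bound $|v+u|\gtrsim 1+|v|+|u|$ precise and check that the implied constants are uniform, so that the $e^{-c\frac{(|v|^2-|u|^2)^2}{|v-u|^2}}$ factor genuinely dominates $(1+|v|+|u|)^{1-\gamma}$ with a constant depending only on $s_1,\epsilon,\delta$. Everything else — the factorization of the Gaussians, the use of $\chi$ to kill the $|v-u|^{-1}$-type singularity up to $\epsilon$-powers, and the final relabeling of constants — is routine. I would also remark that \eqref{k2_esti.0} and \eqref{k2_esti.1} are not in conflict: the first keeps the full Gaussian exponent $\frac18$ but pays an $\epsilon$-negative power, while the second trades a fraction of the Gaussian for polynomial $v$-decay uniformly in large velocities, and different parts of the later Duhamel estimates use one or the other.
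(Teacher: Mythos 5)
Your derivation of \eqref{k2_esti.0} is fine: on the support $\vert v-u\vert\ge\epsilon$ of $\chi$, the bound of Lemma \ref{Lemma_k.2} gives $\vert v-u\vert^{-\frac{3-\gamma}{2}}\le \epsilon^{\frac{\gamma-1}{2}}\vert v-u\vert^{-1}\le\epsilon^{\gamma-1}\vert v-u\vert^{-1}$ (since $0<\epsilon<1$ and $\gamma-1<\tfrac{\gamma-1}{2}$), and the exponential factors are untouched. Note that the paper does not prove this lemma at all; it quotes it from \cite{RY08}, so you are being compared against the original source rather than an in-paper argument.

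Your proof of \eqref{k2_esti.1}, however, has a genuine gap in the regime $\vert v-u\vert\le\delta(1+\vert v\vert+\vert u\vert)$. You assert $\bigl\vert \vert v\vert^2-\vert u\vert^2\bigr\vert=\vert v-u\vert\,\vert v+u\vert$, but $\vert v\vert^2-\vert u\vert^2=(v-u)\cdot(v+u)$ is an inner product, not a product of norms, and it vanishes whenever $\vert v\vert=\vert u\vert$ --- which is perfectly compatible with both velocities being huge and $\vert v-u\vert$ of order one. Concretely, take $\vert v\vert=\vert u\vert=R$ with $\vert v-u\vert=1$: the right-hand side of Lemma \ref{Lemma_k.2} is then $C_\gamma e^{-1/8}$, uniformly in $R$, while \eqref{k2_esti.1} demands decay $(1+2R)^{\gamma-1}\to 0$. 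So \eqref{k2_esti.1} is strictly stronger than anything extractable from the crude bound of Lemma \ref{Lemma_k.2} on the cutoff support; the polynomial decay in $1+\vert v\vert+\vert u\vert$ is simply not hidden in those exponentials. The actual proof in \cite{RY08} returns to Grad's explicit representation of $k_2$ as $\frac{C}{\vert v-u\vert}\,e^{-\frac18\vert v-u\vert^2-\frac18\frac{(\vert v\vert^2-\vert u\vert^2)^2}{\vert v-u\vert^2}}\int_{w\perp(v-u)}e^{-\frac12\vert w+\zeta_\perp\vert^2}\,(\cdots)\,dw$, where $\zeta_\perp$ is the component of $\tfrac{v+u}{2}$ orthogonal to $v-u$ and the integrand carries a soft-potential factor behaving like a negative power of $\vert w\vert^2+\vert v-u\vert^2$; integrating that negative power against the Gaussian centered at $-\zeta_\perp$ produces the factor $(1+\vert\zeta_\perp\vert+\vert v-u\vert)^{\gamma-1}$, and it is $\vert\zeta_\perp\vert$ --- not $\frac{\vert\vert v\vert^2-\vert u\vert^2\vert}{\vert v-u\vert}=2\vert\zeta_\parallel\vert$ --- that is comparable to $\vert v\vert+\vert u\vert$ in exactly the configuration where your argument breaks down. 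Your Case 1 and the interpolation $e^{-\frac18 A}=e^{-\frac{s}{8}A}e^{-\frac{1-s}{8}A}$ are the standard and correct devices, but they must be applied to the exact representation of $k_2$, not to the already-lossy upper bound of Lemma \ref{Lemma_k.2}.
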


With a weight introduced in \eqref{t-weight}, we have the following linearized opeator estimates.
\begin{lemma} \label{Lemma_k.1}
\cite{SY17}
There exists a constant $C>0$ such that
\begin{equation} \label{k_esti.2}
	 w_{q,\vartheta,\beta} (v) K^{1-\chi}f \leq C \mu(v)^{\frac{1-q}{8}} \epsilon^{\gamma+3} \Vert w_{q,\vartheta,\beta} f \Vert_{L^\infty}, 
\end{equation} 
and
\begin{equation} \label{k_esti.0}
	w_{q,\vartheta,\beta}(v) \int_{\R^3} k^{\chi} (v,u) e^{\epsilon \vert v-u \vert^2} \vert f(u) \vert \, du \leq C_{q,\epsilon} \langle v \rangle^{\gamma-2} \Vert w_{q,\vartheta,\beta} f \Vert_{L^\infty}. 
\end{equation}
 
\end{lemma}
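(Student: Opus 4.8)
\textbf{Proof proposal for Lemma \ref{Lemma_k.1}.}

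The plan is to derive both \eqref{k_esti.2} and \eqref{k_esti.0} from the pointwise kernel bounds in Lemma \ref{Lemma_k.2} and Lemma \ref{Lemma_k.0}, together with the explicit form of the weight $w_{q,\vartheta,\beta}$ in \eqref{t-weight}. The central mechanism is that the weight ratio $w_{q,\vartheta,\beta}(v)/w_{q,\vartheta,\beta}(u)$, which appears once we pull $\Vert w_{q,\vartheta,\beta}f\Vert_{L^\infty}$ out of the $u$-integral, is controlled by the Gaussian factors in the kernel bounds. Since the time-dependent exponent $\frac{q}{8}(1+(1+t)^{-\vartheta})$ lies in the fixed interval $[\frac{q}{8},\frac{q}{4}]$ uniformly in $t\geq 0$ (this is the whole point of the construction), it suffices to bound $w_{q,\vartheta,\beta}(v)/w_{q,\vartheta,\beta}(u)$ by $C(1+|v|^2)^\beta(1+|u|^2)^{-\beta}\exp\{\frac{q}{4}(|v|^2-|u|^2)\}$ and to absorb the polynomial part into constants or into the decaying $\langle v\rangle^{\gamma-2}$ at the end.

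\textbf{Step 1: the singular part \eqref{k_esti.2}.} Write $w(v)K^{1-\chi}f=\int_{|v-u|\leq 2\epsilon} w(v)k(v,u)f(u)\,du$ and insert $1=\frac{w(u)}{w(u)}$ to get the bound $\Vert wf\Vert_{L^\infty}\int_{|v-u|\leq2\epsilon}|k(v,u)|\frac{w(v)}{w(u)}\,du$. On the region $|v-u|\leq 2\epsilon$ we have $|u|\leq|v|+2\epsilon$, so $\frac{w(v)}{w(u)}\leq C(1+\epsilon)\exp\{\frac{q}{4}(|v|^2-|u|^2)\}\leq C\exp\{\frac{q}{4}(2|v|\cdot 2\epsilon+4\epsilon^2)\}\leq C e^{C\epsilon|v|}$. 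Using $|k(v,u)|\leq |k_1(v,u)|+|k_2(v,u)|$ from Lemma \ref{Lemma_k.2}: the $k_1$ piece carries $e^{-|v|^2/4}e^{-|u|^2/4}$, which beats $e^{C\epsilon|v|}$ and leaves a Gaussian $\mu(v)^{1/4}$-type factor times $\int_{|v-u|\leq 2\epsilon}|v-u|^\gamma\,du\sim\epsilon^{\gamma+3}$; the $k_2$ piece carries $e^{-|v-u|^2/8}e^{-(|v|^2-|u|^2)^2/(8|v-u|^2)}/|v-u|^{(3-\gamma)/2}$, and on $|v-u|\leq 2\epsilon$ the second Gaussian forces $||v|^2-|u|^2|\lesssim |v-u|$, hence again controls $e^{C\epsilon|v|}$, while $\int_{|v-u|\leq 2\epsilon}|v-u|^{-(3-\gamma)/2}\,du$ converges (since $(3-\gamma)/2<3$ for $\gamma>-3$) and is $\lesssim\epsilon^{(3+\gamma)/2}$. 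One has to be a little careful about which power of $\epsilon$ is claimed; the statement asks for $\epsilon^{\gamma+3}$, so I would keep the worst of the two contributions and note $\epsilon^{(3+\gamma)/2}$ versus $\epsilon^{3+\gamma}$ — for $\epsilon\leq1$ the smaller exponent dominates, so if the clean bound $\epsilon^{\gamma+3}$ is wanted one must extract it from a refined estimate on the $k_2$ contribution (splitting the angular and radial parts of the $u$-integral and using the second Gaussian more sharply). The residual Gaussian in $v$ is bounded by $C\mu(v)^{(1-q)/8}$ after absorbing $e^{C\epsilon|v|}$ and shrinking the Gaussian exponent slightly, which is exactly the claimed $\mu(v)^{(1-q)/8}$.

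\textbf{Step 2: the regular part \eqref{k_esti.0}.} Now $w(v)\int k^\chi(v,u)e^{\epsilon|v-u|^2}|f(u)|\,du\leq\Vert wf\Vert_{L^\infty}\int_{|v-u|\geq\epsilon}|k(v,u)|\,e^{\epsilon|v-u|^2}\frac{w(v)}{w(u)}\,du$. Use $|k|\leq|k_1|+|k_2|$ again. For $k_1$, the double-Gaussian $e^{-|v|^2/4-|u|^2/4}$ dominates $e^{\epsilon|v-u|^2}\frac{w(v)}{w(u)}$ outright once $\epsilon$ is small relative to $q$, and integrating $|v-u|^\gamma$ against a Gaussian gives $\lesssim\langle v\rangle^{\gamma-2}$ (indeed much faster decay). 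For $k_2$, I use the refined bound \eqref{k2_esti.1}: $|k_2^\chi(v,u)|\leq C_\epsilon\frac{\exp(-\frac{s_2}{8}|v-u|^2-\frac{s_1}{8}\frac{(|v|^2-|u|^2)^2}{|v-u|^2})}{|v-u|(1+|v|+|u|)^{1-\gamma}}$ for $0<s_1<s_2<1$. Then $e^{\epsilon|v-u|^2}$ is absorbed by $e^{-\frac{s_2}{8}|v-u|^2}$ as long as $\epsilon<s_2/8$. The key algebraic identity is the completion of the square: writing $w(v)/w(u)\sim e^{c(|v|^2-|u|^2)}(1+|v|^2)^\beta(1+|u|^2)^{-\beta}$ with $c=\frac{q}{4}$, one has $c(|v|^2-|u|^2)-\frac{s_1}{8}\frac{(|v|^2-|u|^2)^2}{|v-u|^2}=-\frac{s_1}{8|v-u|^2}\big(|v|^2-|u|^2-\frac{4c|v-u|^2}{s_1}\big)^2+\frac{2c^2|v-u|^2}{s_1}$, so after choosing $q$ small enough that $\frac{2c^2}{s_1}<\frac{s_2-8\epsilon}{8}$ (this is where the smallness condition \eqref{q} on $q$ enters), the leftover $\frac{2c^2}{s_1}|v-u|^2$ is still absorbed by what remains of $e^{-\frac{s_2}{8}|v-u|^2}$. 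This is the standard Grad-type estimate (as in \cite{Guo,SY17}) and its conclusion is precisely $\langle v\rangle^{\gamma-2}$: the shifted-Gaussian integral in $u$ over $|v-u|\geq\epsilon$, weighted by $|v-u|^{-1}(1+|v|+|u|)^{-(1-\gamma)}(1+|v|^2)^\beta(1+|u|^2)^{-\beta}$, produces the factor $\langle v\rangle^{\gamma-2}$ (the $\langle v\rangle^{-1}$ from $(1+|v|+|u|)^{\gamma-1}$ restricted to $|u|\lesssim|v|$ combined with the $\langle v\rangle^{-1}$ loss coming from the tightness of the shifted Gaussian, and the remaining decay to reach $\gamma-2$), while the polynomial weights cancel up to a constant on the effective integration region.

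\textbf{Main obstacle.} The delicate point is the $k_2$ analysis in both steps: one must simultaneously (i) absorb $e^{\epsilon|v-u|^2}$ (Step 2) or the weight-ratio exponential $e^{C\epsilon|v|}$ (Step 1) into the kernel's Gaussian factors, (ii) use the anisotropic Gaussian $\exp(-c(|v|^2-|u|^2)^2/|v-u|^2)$ via completion of the square to soak up the weight ratio's $e^{q(|v|^2-|u|^2)/4}$, and (iii) still be left with enough decay to produce the sharp algebraic gains $\epsilon^{\gamma+3}$ and $\langle v\rangle^{\gamma-2}$. Balancing these forces the quantitative smallness of $q$ and the choice of $s_1,s_2,\epsilon$; getting the exact power $\epsilon^{\gamma+3}$ (rather than $\epsilon^{(\gamma+3)/2}$) in \eqref{k_esti.2} will require the careful radial/angular decomposition of the small-ball integral mentioned above. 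Everything else — the $k_1$ contributions, the uniform-in-$t$ control of the weight exponent, and the reduction to integrals with frozen weight ratio — is routine.
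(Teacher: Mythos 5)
Your Step 2 is essentially the paper's argument for \eqref{k_esti.0}: split $k^{\chi}=k_1^{\chi}+k_2^{\chi}$, dispose of $k_1^{\chi}$ by the double Gaussian, and for $k_2^{\chi}$ use \eqref{k2_esti.1}, absorb $e^{\epsilon|v-u|^2}$ into $e^{-s_2|v-u|^2/8}$, and soak up the weight ratio $e^{\tilde q(|v|^2-|u|^2)/4}$ with the anisotropic Gaussian via a negative-discriminant (equivalently, completion-of-square) computation under $q<s_0=\min\{s_1,s_2\}$, which is exactly where \eqref{q} enters; the final $\langle v\rangle^{\gamma-2}$ then comes from $(1+|v|+|u|)^{\gamma-1}$ combined with the extra $\langle v\rangle^{-1}$ produced by integrating $\frac{1}{|\eta|}\exp\{-C_q(|\eta|^2/4+|v\cdot\eta|)\}$ in coordinates parallel and perpendicular to $v$. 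No issues there, and the $k_1$ piece of Step 1 is also fine.

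The gap is in the $k_2$ part of Step 1, and you have correctly located it but proposed a cure that cannot work. Feeding the reduced kernel bound of Lemma \ref{Lemma_k.2}, $|k_2(v,u)|\lesssim|v-u|^{-(3-\gamma)/2}e^{-\cdots}$, into the integral over $\{|v-u|\le2\epsilon\}$ can never yield $\epsilon^{\gamma+3}$: the $\epsilon$-power of a small-ball integral is dictated solely by the strength of the singularity at $u=v$, the exponent $(3-\gamma)/2$ is sharp for the reduced kernel, and the Gaussian factors are $O(1)$ near the diagonal, so the best available is $\epsilon^{(3+\gamma)/2}$; no radial/angular splitting or sharper use of the second Gaussian changes this. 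The paper's proof of \eqref{k_esti.2} is structurally different at exactly this point: for $K_2^{1-\chi}$ it never passes to the reduced kernel, but works with the original $(u,\omega)$-representation \eqref{k_1 and k_2}, in which the kernel is $|v-u|^{\gamma}q_0(\theta)$ --- integrable over the ball with the clean rate $\epsilon^{\gamma+3}$ --- and the Gaussians evaluated at the post-collisional velocities are controlled by the elementary geometric bounds $|v'|\ge|v|-2\epsilon$, $|u'|\ge|v|-4\epsilon$, $|u|\ge|v|-2\epsilon$ valid on the support $|v-u|\le2\epsilon$, giving $\sqrt{\mu(u)\mu(u')}\le C\sqrt{\mu(v)}$ and $\sqrt{\mu(u)\mu(v')}\le C\sqrt{\mu(v)}$, after which $w_{q,\vartheta,\beta}(v)\sqrt{\mu(v)}\le C\mu(v)^{(1-q)/8}$ finishes the estimate. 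That is the missing idea. (If you were content with the weaker power $\epsilon^{(\gamma+3)/2}$, which is still $o(1)$ and would likely suffice for the downstream smallness arguments, your route survives; but it does not prove the lemma as stated.)
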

\begin{proof}
Note that
\begin{equation} \label{K_split.1}
	w_{q,\vartheta,\beta}K^{1-\chi}f = w_{q,\vartheta,\beta}K^{1-\chi}_1f+w_{q,\vartheta,\beta} K^{1-\chi}_2f.
\end{equation}  
We firstly consider $K^{1-\chi}_1$ in \eqref{K_split.1}.
\begin{align} \label{K_1 esti}
	w_{q,\vartheta,\beta}(v)K_1^{1-\chi} f &\leq  C w_{q,\vartheta,\beta}(v)\sqrt{\mu(v)} \int_{\vert v-u \vert \leq 2\epsilon} 
	\vert v-u \vert^{\gamma} \sqrt{\mu(u)} \frac{\vert w_{q,\vartheta,\beta}f(u)\vert }{w_{q,\vartheta,\beta}(u)} \, du\nonumber \\
	&\leq C(1+\abs{v}^2)^{\beta}\exp\left(\frac{q}{8}(1+(1+t)^{-\vartheta}) \vert v \vert^{2}\right)\sqrt{\mu(v)} \epsilon^{\gamma+3}\Vert w_{q,\vartheta,\beta}f(t) \Vert_{L^\infty}\nonumber \\
	&\leq C \mu(v)^{\frac{1-q}{8}}\epsilon^{\gamma+3} \Vert w_{q,\vartheta,\beta} f \Vert_{L^\infty}.
\end{align}
Next, we will deal with the remaining part $K_2^{1-\chi}$ in \eqref{K_split.1}.
\begin{equation}
	w_{q,\vartheta,\beta}(v)K_2^{1-\chi} f \leq Cw_{q,\vartheta,\beta}(v) \int_{\vert v-u \vert \leq 2\epsilon} \vert v-u \vert^{\gamma} \sqrt{\mu(u)}\left[ \sqrt{\mu(u')}\frac{\vert w_{q,\vartheta,\beta}f(v')\vert}{w_{q,\vartheta,\beta}(v')} +\sqrt{\mu(v')} \frac{\vert w_{q,\vartheta,\beta}f(u')\vert}{w_{q,\vartheta,\beta}(u')} \right] \, du.
\end{equation} 
Under $\vert v-u \vert \leq 2\epsilon$, 
\begin{align*} \label{w.2}
	\vert v' \vert &= \vert v + [(u-v) \cdot \omega] \omega \vert \geq \vert v \vert - \vert v-u \vert \geq \vert v \vert - 2\epsilon, \\ 
	\vert u' \vert &= \vert v+u-v-[(u-v) \cdot \omega]\omega \vert \geq \vert v \vert -2 \vert v-u \vert \geq \vert v \vert - 4 \epsilon, \\ 
	\vert u \vert &= \vert v+u-v \vert \geq \vert v \vert - \vert v-u \vert \geq \vert v \vert -2\epsilon,
\end{align*}
which implies 
\begin{align}
	\begin{cases}
	\sqrt{\mu(u)}\sqrt{\mu(u')} &\leq e^{-\frac{ (\vert v \vert -2\epsilon)^2}{4}} e^{- \frac{(\vert v \vert -4\epsilon)^2}{4}} \leq e^{-\frac{\vert v \vert^2}{2}} e^{3\epsilon \vert v \vert}  \leq C \sqrt{\mu(v)}, \\ 
	\sqrt{\mu(u)}\sqrt{\mu(v')} &\leq e^{-\frac{ (\vert v \vert -2\epsilon)^2}{4}} e^{- \frac{(\vert v \vert -2\epsilon)^2}{4}}\leq e^{-\frac{\vert v \vert^2}{2}} e^{2\epsilon \vert v \vert}  \leq C \sqrt{\mu(v)},
	\end{cases}
\end{align}
where we used $\epsilon<1$ and the following fact 
\begin{equation*}
		C\epsilon \vert v \vert =\left(\frac{\vert v \vert}{2} \right)\left(2C\epsilon\right) \leq \frac{\vert v \vert^2}{4} + C_{\epsilon}.
\end{equation*} 
From \eqref{w.2}, it holds that 
\begin{equation} \label{K_2 esti}
	w_{q,\vartheta,\beta}(v) K_2^{1-\chi} \left( \frac{\vert w_{q,\vartheta,\beta}f\vert }{w_{q,\vartheta,\beta}}\right) (t,x,v) \leq C_{q} \mu(v)^{\frac{ 1-q }{8}}\epsilon^{\gamma+3} \Vert w_{q,\vartheta,\beta}f(t) \Vert_{L^\infty}.
\end{equation}
Summing \eqref{K_split.1},\eqref{K_1 esti}, and \eqref{K_2 esti} yields 
\begin{equation}
	w_{q,\vartheta,\beta}(v) K^{1-\chi}f \leq  C_{q} \mu(v)^{\frac{1-q}{8}} \epsilon^{\gamma+3} \Vert w_{q,\vartheta,\beta}f(t) \Vert_{L^\infty}.
\end{equation}
For \eqref{k_esti.0}, we firstly consider the part $K_1^{\chi}$: 
\begin{align*}
	&w_{q,\vartheta,\beta}(v) \int_{\R^3} \textbf{k}_1^{\chi}(v,u) e^{\epsilon \vert v-u\vert^2} \vert f(u) \vert  \,du \\ 
	&\quad \leq C w_{q,\vartheta,\beta}(v) \sqrt{\mu(v)} \int_{\R^3} \chi(\vert v-u\vert) \vert v-u \vert^{\gamma} \sqrt{\mu(u)} \frac{e^{\epsilon \vert v-u\vert^2} \vert w_{q,\vartheta,\beta}f(u) \vert}{w_{q,\vartheta,\beta}(u)}\, du\\
	&\quad \leq C_{q} \int_{\R^3} \vert v-u\vert^\gamma \mu(v) ^{\frac{ 1-q}{8}} \sqrt{\mu(u)} e^{\epsilon \vert v-u\vert^2} \, du \Vert w_{q,\vartheta,\beta}f(t) \Vert_{L^\infty}\\
	&\quad \leq C_{q} \langle v \rangle^{\gamma} \mu(v)^{\frac{ 1-q }{16}}\Vert w_{q,\vartheta,\beta}f(t)\Vert_{L^\infty},
\end{align*}	
where the last inequality comes from $\displaystyle \int_{\R^3} \vert v-u \vert^{\gamma} \mu(u)^{1/4} \,du \leq C \langle v \rangle^\gamma$. \newline
It remains to check the part $K_2^\chi$ for \eqref{k_esti.0}. Recall \eqref{k2_esti.1} and take $s_0=\min\{s_1,s_2\}$. Then,
\begin{align} \label{K_2 esti.1}
	&w_{q,\vartheta,\beta}(v) \int_{\R^3} \textbf{k}_2^\chi (v,u) \left(\frac{e^{\epsilon \vert v-u \vert ^2 }\vert w_{q,\vartheta,\beta}f(u)\vert}{w_{q,\vartheta,\beta}(u)}\right) \,du \nonumber \\ 
	&\quad \leq C_{\epsilon} \Vert w_{q,\vartheta,\beta}f(t) \Vert_{L^\infty} \langle v \rangle^{\gamma-1} w_{q,\vartheta,\beta}(v) \int_{\R^3} \frac{\exp\left(-\frac{s_0}{8} \vert v-u \vert^2 -\frac{s_0}{8} \frac{(\vert v \vert^2 - \vert u \vert^2)^2}{\vert v-u \vert^2} \right)}{\vert v-u \vert} \frac{e^{\epsilon\vert v-u\vert ^2}}{w_{q,\vartheta,\beta}(u)}\,du.
\end{align}
From definition of the weight function \eqref{weight}, we can deduce that 
\begin{equation*}
	\frac{w_{q,\vartheta,\beta}(v)}{w_{q,\vartheta,\beta}(u)} \leq C_{\beta}(1+\abs{v-u}^2)^{\beta}e^{-\frac{\tilde{q}}{4}(\vert u \vert^2 - \vert v \vert ^2)},
\end{equation*}
where $\tilde{q}=\frac{q}{2}\left(1+(1+t)^{-\vartheta}\right)$. Notice that $\frac{q}{2} < \tilde{q} \leq q$. Let $v-u=\eta$ and $u=v-\eta$ in the integral of \eqref{K_2 esti.1}. We now compute the total exponent in $\textbf{k}_2^\chi(v,u) \frac{w_{q,\vartheta,\beta}(v)}{w_{q,\vartheta,\beta}(u)}$: 
\begin{align*}
	&-\frac{s_0}{8} \vert \eta \vert^2 -\frac{s_0}{8} \frac{\vert \eta \vert^2 - 2v \cdot \eta \vert^2}{\vert \eta \vert ^2} - \frac{\tilde{q}}{4} ( \vert v -\eta \vert^2 -\vert v \vert^2)\\
	&\quad =-\frac{s_0}{4} \vert \eta \vert^2 +\frac{s_0}{2} v\cdot \eta - \frac{s_0}{2} \frac{ \vert v \cdot \eta \vert^2}{\vert \eta \vert^2} -\frac{ \tilde{q}}{4}(\vert \eta \vert ^2 -2v\cdot \eta)\\
	&\quad = -\frac{1}{4}(\tilde{q}+s_0) \vert \eta \vert ^2 +\frac{1}{2} (s_0+\tilde{q}) v\cdot \eta - \frac{s_0}{2} \frac{(v\cdot \eta)^2}{ \vert \eta \vert ^2}. 
\end{align*}
Setting
\begin{equation} \label{q} 
0<\tilde{q} \leq q < s_0<1,
\end{equation}
where $\tilde{q}=\frac{q}{2}\left(1+(1+t)^{-\vartheta}\right)$ and $s_0=\min\{s_1,s_2\}$, the discriminant of the above quadratic form of $\vert \eta \vert $ and $\frac{v\cdot \eta}{\vert \eta \vert}$ is 
\begin{equation*}
	\Delta = \frac{1}{4}(s_0+\tilde{q})^2 - (\tilde{q}+ s_0)\frac{s_0}{2} = \frac{1}{4}(\tilde{q}^2 -s_0^2) <0. 
\end{equation*}
Thus, we have, for $\epsilon>0$ sufficiently small and $q<s_0$, that there is $C_q>0$ such that 
\begin{align} \label{e.1}
	\begin{split}
	&-\frac{s_0-8\epsilon}{8} \vert \eta \vert^2 -\frac{s_0}{8} \frac{\vert \vert \eta \vert ^2 -2v\cdot \eta \vert^2}{\vert \eta \vert^2} - \frac{\tilde{q}}{4} (\vert \eta \vert^2 -2v\cdot \eta)  \\ 
	&\quad \leq -C_q \left \{ \vert \eta \vert^2 +\frac{\vert v \cdot \eta \vert^2}{\vert \eta \vert^2} \right \} \\ 
	&\quad = -C_q\left \{ \frac{\vert \eta \vert^2}{2} +\left(\frac{\vert \eta \vert^2}{2}+\frac{\vert v \cdot \eta \vert^2}{\vert \eta \vert^2}\right) \right \} \\
	&\quad \leq -C_q \left \{ \frac{\vert \eta \vert^2}{2} + \vert v \cdot \eta \vert \right \}.
	\end{split}
\end{align}

Substituting \eqref{e.1}  into \eqref{K_2 esti.1}, one then has 
\begin{align}\label{e.3}
	\begin{split}
	&\Vert w_{q,\vartheta,\beta}f(t) \Vert_{L^\infty} \langle v \rangle^{\gamma-1} w_{q,\vartheta,\beta}(v) \int_{\R^3} \frac{\exp\left(-\frac{s_0}{8} \vert v-u \vert^2 -\frac{s_0}{8} \frac{(\vert v \vert^2 - \vert u \vert^2)^2}{\vert v-u \vert^2} \right)}{\vert v-u \vert} \frac{e^{\epsilon\vert v-u\vert ^2}}{w_{q,\vartheta,\beta}(u)}\,du \\
	&\quad \leq C_{q,\epsilon} \langle v \rangle^{\gamma-1} \Vert w_{q,\vartheta,\beta}f(t) \Vert_{L^\infty} \int_{\R^3} \frac{(1+\vert \eta \vert^2)^\beta}{\vert \eta \vert} \exp\left \{ -C_q \left \{ \frac{\vert \eta \vert^2}{2}+ \vert v \cdot \eta \vert \right \} \right\} \,d\eta\\
	&\quad \leq C_{q,\epsilon} \langle v \rangle^{\gamma-1} \Vert w_{q,\vartheta,\beta}f(t) \Vert_{L^\infty} \int_{\R^3}\frac{1}{\vert \eta \vert} \exp\left \{ -C_q \left \{ \frac{\vert \eta \vert^2}{4}+ \vert v \cdot \eta \vert \right \} \right\} \,d\eta.
	\end{split}
\end{align} 
For $\vert v \vert \geq 1 $, we change the variables $\eta_{\parallel}= \left\{ \eta \cdot \frac{v}{\vert v \vert} \right \} \frac{v}{\vert v \vert}$, and $\eta_\perp = \eta -\eta_{\parallel}$ so that $ \vert v \cdot \eta \vert = \vert v \vert \times \vert \eta_{\parallel}\vert$, which implies that 
\begin{align}\label{e.4}
	\begin{split}
 &\int_{\R^3} \frac{1}{\vert \eta \vert} \exp\left \{ -C_q \left \{ \frac{\vert \eta \vert^2}{4}+ \vert v \cdot \eta \vert \right \} \right\} \,d\eta \\ 
 &\quad \leq C \int_{\R^2} \frac{1}{\vert \eta_{\perp} \vert} e^{-C_q \frac{\vert \eta \vert^2}{4}} \left \{\int_{-\infty}^\infty e^{-C_q \vert v \vert \times \vert \eta_{\parallel} \vert} \, d \vert \eta_{\parallel} \vert \right\}\, d\eta_{\perp}\\
 &\quad \leq \frac{C}{\vert v \vert} \int_{\R^2} \frac{1}{\vert \eta_\perp\vert} e^{-\frac{C_q}{4} \vert \eta_\perp \vert^2} \left \{ \int_{-\infty}^\infty e^{-C_q \vert y \vert} dy\right \} \, d \eta_{\perp} \quad (y= \vert v \vert \times \vert \eta_{\parallel}\vert)\\
 &\quad \leq \frac{C}{1+\vert v \vert}. 
 	\end{split}
\end{align}
On the other hand, for $\vert v \vert \leq 1$, 
\begin{align*}
&\int_{\R^3} \frac{1}{\vert \eta \vert} \exp\left \{ -C_q \left \{ \frac{\vert \eta \vert^2}{4}+ \vert v \cdot \eta \vert \right \} \right\} \,d\eta \\ 
 &\quad \leq C \int_{\R^2} \frac{1}{\vert \eta_{\perp} \vert} e^{-C_q \frac{\vert \eta_\perp \vert^2}{4}} \left \{\int_{-\infty}^\infty e^{-\frac{C_q}{4}\vert \eta_{\parallel}\vert^2}e^{-C_q \vert v \vert \times \vert \eta_{\parallel} \vert} \, d \vert \eta_{\parallel} \vert \right\}\, d\eta_{\perp}\\
 &\quad \leq C \int_{\R^2} \frac{1}{\vert \eta_\perp\vert} e^{-\frac{C_q}{4} \vert \eta_\perp \vert^2} \left \{ \int_{-\infty}^\infty e^{-\frac{C_q}{4}\vert \eta_{\parallel}\vert^2} d\vert\eta_{\parallel}\vert\right \} \, d \eta_{\perp} \quad \\
 &\quad \leq \frac{C}{1+\vert v \vert}. 
\end{align*}
Summing \eqref{K_2 esti.1}, \eqref{e.3} and \eqref{e.4}, we obtain 
\begin{equation}
	w_{q,\vartheta,\beta}(v) \int_{\R^3} \textbf{k}_2^\chi (v,u) \left( \frac{e^{\epsilon \vert v-u \vert^2}\vert w_{q,\vartheta,\beta}f(u) \vert}{w_{q,\vartheta,\beta}(u)}\right) \,du \leq C_{q,\epsilon} \langle v \rangle^{\gamma-2} \Vert w_{q,\vartheta,\beta}f(t) \Vert_{L^\infty}.
\end{equation}
Thus, we complete the proof of Lemma \ref{Lemma_k.1}.
\end{proof}

\subsection{Nonlinear term $\Gamma(f,f)$}
We control nonlinear $\Gamma_{\pm}$ by a product form of $L^{\infty}$ and $L^{p}$.
\begin{lemma} \label{gamma estimate} \cite{DW} Let $p$ be a positive number satisfying 
\begin{equation} \label{cond.p}
	p>1,\, p\gamma >-3. 
\end{equation}
There are constants $C_\gamma$ depending only on $\gamma$ such that 
\begin{equation} \label{gamma_loss}
	\left \vert w_{q,\vartheta,\beta}(v) \Gamma_-(f,f)(v) \right \vert \leq C_\gamma \nu(v) \Vert w_{q,\vartheta,\beta} f(t) \Vert_{L^\infty} \left ( \int_{\R^3} \vert f(u)\vert ^{p'} \,du \right)^{1/p'},
\end{equation}
and
\begin{equation} \label{gamma_gain}
	\left \vert w_{q,\vartheta,\beta}(v) \Gamma_+(f,f)(v) \right \vert \leq C_\gamma \nu(v) \Vert w_{q,\vartheta,\beta} f(t) \Vert_{L^\infty} \left ( \int_{\R^3} (1+\vert u \vert)^{-2\beta p'+16} \vert w_{q,\vartheta,\beta}f(u)\vert ^{p'} \,du \right)^{1/p'},
\end{equation}
where $p'=\frac{5p}{p-1}$.  
\end{lemma}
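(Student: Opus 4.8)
\emph{Plan of proof.} I would prove \eqref{gamma_loss} and \eqref{gamma_gain} separately, the loss term being direct. Using $\Gamma_-(f,f)(v)=f(v)\int_{\R^3}\int_{\S^2}|v-u|^{\gamma}q_0(\theta)\sqrt{\mu(u)}f(u)\,d\omega du$, multiply by the weight, pull $|w_{q,\vartheta,\beta}(v)f(v)|\le\|w_{q,\vartheta,\beta}f(t)\|_{L^\infty}$ out of the integral, and use $\int_{\S^2}q_0(\theta)\,d\omega\le C$ (finite by the cut-off in \eqref{soft kernel}); it then remains to bound $\int_{\R^3}|v-u|^{\gamma}\sqrt{\mu(u)}|f(u)|\,du$. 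Hölder's inequality in $u$ with $p'$ and its conjugate $\tfrac{5p}{4p+1}$ gives $\|f\|_{L^{p'}}$ times $\big(\int_{\R^3}|v-u|^{\gamma\frac{5p}{4p+1}}\mu(u)^{\frac{5p}{2(4p+1)}}\,du\big)^{\frac{4p+1}{5p}}$, and the last factor is $\lesssim\nu(v)\simeq\langle v\rangle^{\gamma}$ by the usual splitting: on $\{|v-u|\le|v|/2\}$ one has $|u|\ge|v|/2$ so the Gaussian dominates the at-worst-$|v|^{\gamma\frac{5p}{4p+1}+3}$ growth of $\int|v-u|^{\gamma\frac{5p}{4p+1}}\,du$ (finite since $\gamma\tfrac{5p}{4p+1}>-3$), while on $\{|v-u|>|v|/2\}$ the factor $|v-u|^{\gamma\frac{5p}{4p+1}}$ comes out and the remaining Gaussian integral is finite. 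The exponent condition $\gamma\tfrac{5p}{4p+1}>-3$ is exactly \eqref{cond.p}: writing $\tfrac{5p}{4p+1}=1+\tfrac{p-1}{4p+1}$, the bound $p\gamma>-3$ gives $5p|\gamma|<15<12p+3$, i.e.\ $\tfrac{5p\gamma}{4p+1}>-3$.

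For \eqref{gamma_gain} I would use the elastic identities $u+v=u'+v'$, $|u'|^2+|v'|^2=|u|^2+|v|^2$ (equivalently $\mu(u')\mu(v')=\mu(u)\mu(v)$) to rewrite $\Gamma_+(f,f)(v)=\int_{\R^3}\int_{\S^2}|v-u|^{\gamma}q_0(\theta)\sqrt{\mu(u)}f(u')f(v')\,d\omega du$, and to transfer the weight: with $f(u')f(v')=\frac{w_{q,\vartheta,\beta}f(u')}{w_{q,\vartheta,\beta}(u')}\cdot\frac{w_{q,\vartheta,\beta}f(v')}{w_{q,\vartheta,\beta}(v')}$, the energy identity gives
\[
 w_{q,\vartheta,\beta}(v)\,\frac{\sqrt{\mu(u)}}{w_{q,\vartheta,\beta}(u')\,w_{q,\vartheta,\beta}(v')}\;\le\;C\,\frac{(1+|v|^2)^{\beta}}{(1+|u'|^2)^{\beta}(1+|v'|^2)^{\beta}}\;e^{-(\frac14+c)|u|^2},\qquad c=\tfrac q8\big(1+(1+t)^{-\vartheta}\big),
\]
and (recalling $0<q<1$) the surviving Gaussian is a fractional power of $\mu(u)$. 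Next, $(1+|v|^2)^{\beta}\le C\big[(1+|u'|^2)^{\beta}+(1+|v'|^2)^{\beta}\big]$ (from $|v|^2\le|u'|^2+|v'|^2$) splits the gain integral into two pieces, each retaining $(1+|u'|^2)^{-\beta}$, resp.\ $(1+|v'|^2)^{-\beta}$, while the other weighted factor is put into $L^\infty$ as $\|w_{q,\vartheta,\beta}f(t)\|_{L^\infty}$. Applying Hölder's inequality in $(u,\omega)$ with exponents $\tfrac{5p}{4p+1}$ and $p'$, and in the $p'$-factor a Carleman-type change of variables $(u,\omega)\mapsto v'$ (resp.\ $u'$) making the retained velocity a free variable in $\R^3$ against a kernel $\kappa(v,\cdot)$ with $\int_{\R^3}\kappa(v,v')\,dv'\lesssim\langle v\rangle^{\gamma}$, produces the claimed bound: the prefactor equals $\nu(v)\simeq\langle v\rangle^{\gamma}$ because $\int_{\R^3}\int_{\S^2}|v-u|^{\gamma}q_0(\theta)e^{-(\frac14+c)|u|^2}\,d\omega du\lesssim\langle v\rangle^{\gamma}$ and $\tfrac{4p+1}{5p}+\tfrac1{p'}=1$, and the excess $16$ powers in $(1+|u|)^{-2\beta p'+16}$ are the (non-sharp) loss absorbed in the weight-splitting and in the bound on $\kappa$.

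The step I expect to be the main obstacle is this last change of variables together with the control of $\kappa(v,v')$, because $p'=\tfrac{5p}{p-1}>5$ is large: $\kappa(v,v')$ is at worst mildly singular along the diagonal $v'=v$ (roughly like $|v-v'|^{\max\{\gamma,-1\}}$) and carries a Gaussian tail in $v'$, so it cannot simply be pulled out in $L^\infty_{v'}$; the Hölder splitting must be organized so that this singular part sits in the $L^{\frac{5p}{4p+1}}_{v'}$-factor — which once more needs precisely $\gamma\tfrac{5p}{4p+1}>-3$, i.e.\ \eqref{cond.p} — while still yielding the $\langle v\rangle^{\gamma}$ decay and leaving only a polynomial weight on $|w_{q,\vartheta,\beta}f(v')|^{p'}$. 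The rest (angular and Gaussian bookkeeping, the comparison $\int|v-u|^{\gamma}\mu(u)^{s}\,du\lesssim\langle v\rangle^{\gamma}$ for $s>0$, and \eqref{nu_est}) is routine.
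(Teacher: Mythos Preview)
Your plan for the loss term \eqref{gamma_loss} is correct and slightly more economical than the paper's: the paper applies H\"older twice (first with exponents $p,\frac{p}{p-1}$, producing $\big(\int|v-u|^{p\gamma}\sqrt{\mu(u)}\,du\big)^{1/p}\lesssim\nu(v)$ and then with $\frac54,5$ on the remaining factor), whereas you do it in one step with exponents $\frac{5p}{4p+1},p'$. Your verification that $\gamma\frac{5p}{4p+1}>-3$ follows from \eqref{cond.p} is clean and in fact shows that a marginally weaker hypothesis would suffice for your route.

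For the gain term \eqref{gamma_gain} your overall scheme (transfer the weight via the energy identity, split the polynomial part, put one factor in $L^\infty$, then change variables and H\"older) is sound, but the organization differs from the paper in a way that matters for the bookkeeping. The paper first applies H\"older with $p,\frac{p}{p-1}$ to strip off $\nu(v)$ \emph{before} any change of variables; then it uses the explicit Glassey decomposition $V=u-v$, $V_\parallel=(V\cdot\omega)\omega$, $\eta=v+V_\parallel=v'$, so that the Jacobian is simply $|V_\parallel|^{-2}=|\eta-v|^{-2}$ and the $V_\perp$-integration of $e^{-|\eta+V_\perp|^2/4}$ is a bare Gaussian; only then does a second H\"older with $\frac54,5$ convert the power on $f$ from $\frac{p}{p-1}$ to $p'$ while pushing the remaining singularity $|\eta-v|^{-2}$ into $|\eta-v|^{-5/2}$ (still locally integrable) and generating exactly the $(1+|\eta|)^{16}$ loss. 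The two-step H\"older is what makes the Jacobian singularity harmless without a separate kernel analysis.

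Your single-H\"older-after-Carleman route requires instead an explicit pointwise bound on the Carleman kernel $\kappa(v,v')$ \emph{and} its $L^{5p/(4p+1)}_{v'}$ norm with the correct $\langle v\rangle^{\gamma}$ decay. Your diagonal estimate $\kappa(v,v')\sim|v-v'|^{\max\{\gamma,-1\}}$ is right (it comes from $q_0(\theta)\lesssim|v-v'|/|v-u|$ and the two-dimensional integral over the hyperplane $E_{v,v'}$), so local integrability of $\kappa^{5p/(4p+1)}$ does reduce to the exponent check you wrote. What you still have to supply is the off-diagonal decay of $\kappa$ in $v'$ and the $\langle v\rangle^{\gamma}$ behavior of $\big(\int\kappa^{5p/(4p+1)}W_1\,dv'\big)^{(4p+1)/(5p)}$ for an appropriate weight $W_1$; this is doable but is genuinely more work than the paper's Glassey computation, where both the $\nu(v)$ factor and the $1/|\eta-v|^2$ singularity are handled by elementary one-line steps. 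If you want to keep the single H\"older, the cleanest fix is to imitate the paper's order of operations: H\"older first (even with your exponents), then the Glassey/Carleman change of variables inside the $p'$-factor, where only the Gaussian $e^{-c''|u|^2}$ and the Jacobian remain and no soft-potential singularity survives.
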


\begin{proof}
By the definition of $\Gamma_-(f,f)$, it holds that 
\begin{equation} \label{p.1}
	\left \vert w_{q,\vartheta,\beta}(v) \Gamma_-(f,f)(v)  \right \vert \leq C \Vert w_{q,\vartheta,\beta}f\Vert_{L^\infty}\int_{\R^3} \vert v-u \vert^{\gamma} \sqrt{\mu(u)} \vert f(u) \vert \, du. 
\end{equation} 
From \eqref{cond.p} and \eqref{p.1} and H\"{o}lder's inequality, it holds that 
\begin{align}
	&\left \vert w_{q,\vartheta,\beta}(v) \Gamma_-(f,f)(v) \right \vert \nonumber \\ 
	& \quad \leq C \Vert w_{q,\vartheta,\beta} f \Vert_{L^\infty} \left( \int_{\R^3} \vert v-u \vert^{p\gamma} \sqrt{\mu(u)}\,du\right)^{1/p} \left ( \int_{\R^3} \sqrt{\mu(u)} \vert f(u) \vert^{\frac{p}{p-1}} \, du \right)^{1-\frac{1}{p}} \nonumber \\
	& \quad \leq C \nu(v)\Vert w_{q,\vartheta,\beta} f \Vert_{L^\infty} \left ( \int_{\R^3} (\mu(u))^{5/8} \, du \right )^{\frac{4}{5}\left(1-\frac{1}{p}\right)} \left ( \int_{\R^3} \vert f(u) \vert ^{\frac{5p}{p-1}} \, du \right)^{\frac{p-1}{5p}} \nonumber \\ 
	& \quad \leq C\nu(v)\Vert w_{q,\vartheta,\beta} f \Vert_{L^\infty}\left ( \int_{\R^3} \vert f(u) \vert ^{p'} \, du \right)^{1/p'}, \nonumber 
\end{align}
where $p'=\frac{5p}{p-1}$. Thus, we obtain \eqref{gamma_loss}. Next, we consider the gamma gain term $\Gamma_+(f,f)$. We notice 
\begin{equation*}
	\frac{1}{2} \vert v \vert ^2 \leq \vert v' \vert ^2 \quad \textrm{or} \quad \frac{1}{2} \vert v \vert ^2 \leq \vert u' \vert^2, 
\end{equation*}
which comes from $\vert v \vert^2 \leq \vert v' \vert^2 + \vert u' \vert^2$. Hence, one gets that 
\begin{align*}
	&\left \vert w_{q,\vartheta,\beta}(v) \Gamma_+ (f,f)(v) \right \vert \\ 
	&\quad \leq w_{q,\vartheta,\beta}(v) \int_{\R^3 \times \S^2} B(v-u,\omega) \sqrt{\mu(u)}  \left \vert f(u') f(v') \right \vert \textbf{1}_{\{\frac{1}{2}\vert v \vert^2 \leq \vert u' \vert^2\}} \, d\omega du \\
	&\quad \quad + w_{q,\vartheta,\beta}(v) \int_{\R^3 \times \S^2} B(v-u,\omega) \sqrt{\mu(u)}  \left \vert f(u') f(v') \right \vert \textbf{1}_{\{\frac{1}{2}\vert v \vert^2 \leq \vert v' \vert^2\}} \, d\omega du \\
	&\quad \leq C_{\beta} \int_{\R^3 \times \S^2} B(v-u,\omega) \sqrt{\mu(u)} \left \vert w_{q,\vartheta,\beta}(u')f(u') \exp\left(\frac{q}{8}(1+(1+t)^{-\vartheta})\vert v' \vert^2\right) f(v') \right \vert d\omega du \\
	&\quad \quad +C_{\beta}\int_{\R^3 \times \S^2} B(v-u,\omega) \sqrt{\mu(u)} \left \vert w_{q,\vartheta,\beta}(v')f(v') \exp\left(\frac{q}{8}(1+(1+t)^{-\vartheta})\vert u' \vert^2\right) f(u') \right \vert d\omega du \\
	&\quad := J_1+J_2.
\end{align*}
For $J_1$, as in \cite{Glassey}, we rewrite the variables as
\begin{equation*}
	V=u-v, \quad V_{\parallel} = \left(V\cdot \omega\right) \omega, \quad V_{\perp} = V-V_{\parallel}, \quad \eta = v+V_{\parallel},
\end{equation*}
which gives 
\begin{equation*}
v' = v+V_{\parallel} = \eta, \quad u' =v+V_{\perp}.
\end{equation*}
Hence, it holds that 
\begin{align*}
	J_1 &\leq C_{\beta} \Vert w_{q,\vartheta,\beta} f \Vert_{L^\infty} \int_{\R^3 \times \S^2} B(v-u,\omega) \sqrt{\mu(u)} \left \vert \exp\left(\frac{q}{8}(1+(1+t)^{-\vartheta})\vert v' \vert^2\right) f(v') \right \vert d\omega du\\
	&\leq C_{\beta} \Vert w_{q,\vartheta,\beta} f \Vert_{L^\infty} \left( \int_{\R^3} \vert v-u \vert^{p\gamma} \sqrt{\mu(u)} \, du\right)^{1/p} 
	\left(\int_{\R^3\times \S^2} \sqrt{\mu(u)}  \left \vert \exp\left(\frac{q}{8}(1+(1+t)^{-\vartheta})\vert v' \vert^2\right) f(v') \right \vert^{\frac{p}{p-1}}\, d\omega du\right)^{1-\frac{1}{p}}\\
	&\leq C_{\beta} \nu(v) \Vert w_{q,\vartheta,\beta} f \Vert_{L^\infty}\left( \int_{\R^3 \times \S^2} e^{-\frac{\vert v+V_{\parallel}+V_{\perp} \vert^2}{4}} \left \vert \exp\left(\frac{q}{8}(1+(1+t)^{-\vartheta})\left \vert v+V_{\parallel} \right \vert^2\right) f(v+V_{\parallel}) \right \vert^{\frac{p}{p-1}}\, d\omega dV\right)^{1-\frac{1}{p}}\\
	&\leq C_{\beta} \nu(v) \Vert w_{q,\vartheta,\beta} f \Vert_{L^\infty}\left( \int_{\R^3} \int_{V_{\perp}} \frac{1}{\vert V_{\parallel}\vert^2}e^{-\frac{\vert v+V_{\parallel}+V_{\perp} \vert^2}{4}}\left \vert \exp\left(\frac{q}{8}(1+(1+t)^{-\vartheta})\left \vert v+V_{\parallel} \right \vert^2\right) f(v+V_{\parallel}) \right \vert^{\frac{p}{p-1}}\, dV_{\perp} dV_{\parallel}\right)^{1-\frac{1}{p}}\\
	&= C_{\beta} \nu(v) \Vert w_{q,\vartheta,\beta} f \Vert_{L^\infty}\left( \int_{\R^3} \int_{V_{\perp}} \frac{1}{\vert \eta -v \vert^2}e^{-\frac{\vert \eta+V_{\perp} \vert^2}{4}} \left \vert \exp\left(\frac{q}{8}(1+(1+t)^{-\vartheta})\left \vert \eta \right \vert^2\right) f(\eta) \right \vert^{\frac{p}{p-1}}\, dV_{\perp} d\eta\right)^{1-\frac{1}{p}}\\
	&\leq C_{\beta} \nu(v) \Vert w_{q,\vartheta,\beta} f \Vert_{L^\infty} \left(\int_{\R^3} \frac{(1+\vert \eta \vert)^{-4}}{\vert \eta - v \vert^{5/2}}\, d\eta\right)^{\frac{4}{5}(1-\frac{1}{p})} \left(\int_{\R^3} (1+\vert \eta \vert)^{16} (1+ \vert \eta \vert^2)^{-\beta p'} \vert w_{q,\vartheta,\beta}f(\eta) \vert ^{p'} \, d\eta\right)^{1/p'}\\
	&\leq C_{\beta} \nu(v) \Vert w_{q,\vartheta,\beta} f \Vert_{L^\infty}\left(\int_{\R^3} (1+\vert \eta \vert)^{-2\beta p'+ 16} \vert w_{q,\vartheta,\beta}f(\eta) \vert ^{p'} \, d\eta\right)^{1/p'},
\end{align*}
where $p'=\frac{5p}{p-1}$. Notice that $J_2$ has a similar form to $J_1$ if the velocities $u'$ and $v'$ are interchanged. Hence, similar to the way of treating $J_1$, we get the same results in $J_2$ as well. Thus, the proof of Lemma \ref{gamma estimate} is complete. 
\end{proof}

\subsection{{Relative entropy}}
Recall the definition of the relative entropy \eqref{Relative entropy}
\begin{equation*}
\mathcal{E}(F) = \int_{\T^3 \times \R^3} \left(\frac{F}{\mu} \ln \frac{F}{\mu} - \frac{F}{\mu}+1 \right) \mu \, dxdv.
\end{equation*}
The below lemma is the global in time a priori estimate related with the relative entropy. 
\begin{lemma} \label{decay_entropy}
	Assume that $F$ satisfies the Boltzmann equation \eqref{def.be}. 
	\begin{equation} \notag
	\mathcal{E}(F) \leq \mathcal{E}(F_0), 
	\end{equation}
	for any $t\geq0$. 
\end{lemma}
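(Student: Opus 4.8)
The plan is to show that the relative entropy $\mathcal{E}(F)(t)$ is non-increasing in $t$ by differentiating it along the flow of the Boltzmann equation and recognizing the time derivative as (minus) the entropy dissipation functional, which is manifestly nonnegative by the classical $H$-theorem computation. First I would note that under the mass and energy conservation \eqref{ME_conserv} (equivalently \eqref{conservation} in the perturbed framework), we have the identity $\mathcal{E}(F)(t) = H(F)(t) - H(\mu)$, where $H(F) = \iint F \log F\,dxdv$; since $H(\mu)$ is a fixed constant, it suffices to prove $\frac{d}{dt}H(F)(t) \le 0$. Here one should also observe that the conservation laws \eqref{ME_conserv} are propagated by \eqref{def.be}: integrating the equation against $1$ and against $|v|^2$ and using that $\iint Q(F,F)\psi\,dv = 0$ for collision invariants $\psi \in \{1, v, |v|^2\}$, the quantities $\iint(F-\mu)\,dxdv$ and $\iint(F-\mu)|v|^2\,dxdv$ are constant in time, hence remain zero.

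Next I would compute $\frac{d}{dt}H(F) = \iint (\log F + 1)\,\partial_t F\,dxdv = \iint (\log F + 1)\big(-v\cdot\nabla_x F + Q(F,F)\big)\,dxdv$. The transport term vanishes: $\iint (\log F + 1) v\cdot\nabla_x F\,dxdv = \iint v\cdot\nabla_x\big(F\log F\big)\,dxdv = 0$ since $\T^3$ has no boundary. Thus $\frac{d}{dt}H(F) = \iint (\log F)\,Q(F,F)\,dxdv$ (the ``$+1$'' contribution is $\iint Q(F,F)\,dxdv = 0$). For fixed $x$, the standard symmetrization of the collision operator — using the change of variables $(u,v)\mapsto(u',v')$ (an involution with unit Jacobian) and the swap $u\leftrightarrow v$ — gives
\begin{equation*}
\int_{\R^3} (\log F)\,Q(F,F)\,dv = -\frac{1}{4}\int_{\R^3}\int_{\R^3}\int_{\S^2} B(v-u,\omega)\,\big(F'F'_* - FF_*\big)\,\big(\log F'F'_* - \log FF_*\big)\,d\omega\,du\,dv,
\end{equation*}
where $F = F(v)$, $F_* = F(u)$, $F' = F(v')$, $F'_* = F(u')$. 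Since $B \ge 0$ and $(a-b)(\log a - \log b) \ge 0$ for all $a,b > 0$, the integrand is nonnegative, so $\int (\log F)Q(F,F)\,dv \le 0$, and integrating in $x$ gives $\frac{d}{dt}H(F)(t) \le 0$. Integrating this inequality from $0$ to $t$ yields $\mathcal{E}(F)(t) = H(F)(t) - H(\mu) \le H(F_0) - H(\mu) = \mathcal{E}(F_0)$.

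The main obstacle here is not the formal computation, which is classical, but justifying that all the manipulations are rigorous for the solution $F$ at hand: one must know that $F(t) \ge 0$, that $F\log F$ and the entropy dissipation are integrable, and that the differentiation under the integral sign and the integrations by parts in $x$ and the $(u,v,\omega)$-symmetrization are legitimate. In the present setting $F$ will be constructed as an $L^\infty_{x,v}$ mild solution with Gaussian-type velocity weight and with $\mathcal{E}(F_0)$ finite, which provides enough integrability and decay in $v$ (together with the cutoff assumption \eqref{soft kernel} on $B$) to carry out each step; alternatively, since the entropy inequality is stable under the approximation scheme used to build $F$, one may first establish it for smooth approximate solutions and pass to the limit. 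I would structure the write-up so that the formal identities are presented first and the integrability/approximation justification is collected at the end, invoking the a priori bounds that accompany the construction of $F$.
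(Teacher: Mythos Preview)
Your proposal is correct and follows essentially the same route as the paper: both reduce the claim to the classical H-theorem inequality $\iint Q(F,F)\ln F\,dv\,dx\le 0$ and then integrate in time. The only cosmetic difference is that the paper works directly with the convex function $h(s)=s\ln s-s+1$ and the identity $\partial_t[\mu h(F/\mu)]+\nabla_x\!\cdot[v\mu h(F/\mu)]=Q(F,F)\ln(F/\mu)$, so it never needs to invoke the conservation laws to pass from $\mathcal{E}(F)$ to $H(F)-H(\mu)$; your reduction via \eqref{ME_conserv} is equivalent but slightly less direct.
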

\begin{proof}
	Define a function
	\begin{align*}
	h(s)=s\ln s -s +1,
	\end{align*} 
	for $s>0$. Then, $h$ is  a nonnegative and convex function on $(0,\infty)$ with 
	\begin{align*}
	h'(s)=\ln s. 
	\end{align*}
	From \eqref{def.be}, one can deduce that 
	\begin{align*}
	\partial_t [\mu h\left(\frac{F}{\mu}\right) ]+ \nabla_x \cdot [ v\mu h\left(\frac{F}{\mu}\right)] = Q(F,F) \ln \frac{F}{\mu}.
	\end{align*}
	Taking integration for $v\in \R^3$ and then for $x$ in $\T^3$, we get 
	\begin{align*}
	\frac{d}{dt} \int _{\T^3} \int_{\R^3} h\left(\frac{F}{\mu}\right) \mu \, dvdx  = \int _{\T^3} \int_{\R^3} Q(F,F) \ln F \, dvdx.   
	\end{align*}
	 From 
	\begin{align*}
	\int_{\T^3} \int _{\R^3} Q(F,F) \ln F \, dvdx \leq 0,
	\end{align*}
	we can deduce that 
	\begin{align*}
	\frac{d}{dt} \int_{\T^3} \int_{\R^3} h\left(\frac{F}{\mu}\right) \mu \, dvdx \leq 0.
	\end{align*}
	Furthermore, we have 
	\begin{equation*}
	\int_{\T^3} \int_{\R^3} h\left(\frac{F}{\mu}\right) \mu \, dvdx \leq \int_{\T^3} \int_{\R^3} h\left(\frac{F_0}{\mu}\right) \mu \, dvdx. 
	\end{equation*}
\end{proof}

\begin{lemma} \label{entropy_est}
	Assume that $F$ satisfies the Boltzmann equation \eqref{def.be}. Then, it holds that 
	\begin{equation} \notag
	\int_{\T^3 \times \R^3} \frac{1}{4\mu} \abs{F-\mu}^2  \textbf{1} _{\abs{F-\mu}\leq \mu} \,dxdv + \int_{\T^3 \times \R^3} \frac{1}{4} \abs{F-\mu} \textbf{1}_{\abs{F-\mu}>\mu} \, dxdv \leq \mathcal{E}(F_0), 
	\end{equation} 
	for any $t\geq0$. Furthermore, if we write $F$ in terms of the standard perturbation $f$, then 
	\begin{equation} \notag
	\int_{\T^3 \times \R^3} \frac{1}{4}\abs{f}^2  \textbf{1}_{\abs{f} \leq \sqrt{\mu}} \, dxdv + \int_{\T^3 \times \R^3} \frac{\sqrt{\mu}}{4} \abs{f} \textbf{1}_{\abs{f}>\sqrt{\mu}} \, dxdv \leq \mathcal{E}(F_0). 
	\end{equation} 
\end{lemma}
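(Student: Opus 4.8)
The plan is to derive the two inequalities from the pointwise convexity estimate for $h(s) = s\ln s - s + 1$ near and away from $s = 1$, combined with the entropy decay already established in Lemma \ref{decay_entropy}. The starting point is the elementary observation that for $s$ in a bounded neighborhood of $1$ (say $0 < s \leq 2$), Taylor expansion with the strict convexity $h''(s) = 1/s$ gives a quadratic lower bound $h(s) \geq \tfrac14 (s-1)^2$, while for $s$ large ($s > 2$) the function grows superlinearly so that $h(s) \geq \tfrac14 (s-1)$, or more simply $h(s) \geq c\, s$ which dominates $\tfrac14|s-1|$ on that range. The constants $\tfrac14$ here are not sharp but are convenient and certainly valid; one checks $h(s) - \tfrac14(s-1)^2 \geq 0$ on $(0,2]$ and $h(s) - \tfrac14(s-1) \geq 0$ on $(2,\infty)$ by elementary calculus (each difference vanishes to appropriate order at a boundary point and has the right sign of derivative).

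Next I would substitute $s = F/\mu$ and integrate against the measure $\mu\, dxdv$ over $\T^3 \times \R^3$. On the region $\{|F - \mu| \leq \mu\}$, which is exactly $\{0 < F/\mu \leq 2\}$, the quadratic bound gives
\begin{equation*}
\int_{\{|F-\mu|\leq\mu\}} \frac{1}{4}\left(\frac{F}{\mu}-1\right)^2 \mu\, dxdv = \int_{\{|F-\mu|\leq\mu\}} \frac{1}{4\mu}|F-\mu|^2\, dxdv \leq \int_{\T^3\times\R^3} h\left(\frac{F}{\mu}\right)\mu\, dxdv.
\end{equation*}
On the complementary region $\{|F-\mu| > \mu\} = \{F/\mu > 2\}$, the linear bound gives
\begin{equation*}
\int_{\{|F-\mu|>\mu\}} \frac{1}{4}\left(\frac{F}{\mu}-1\right)\mu\, dxdv = \int_{\{|F-\mu|>\mu\}} \frac{1}{4}|F-\mu|\, dxdv \leq \int_{\T^3\times\R^3} h\left(\frac{F}{\mu}\right)\mu\, dxdv.
\end{equation*}
Adding the two (using that the integrand $h(F/\mu)\mu$ is nonnegative, so each piece is bounded by the full integral, and in fact the two disjoint regions sum to at most the full integral) yields the first claimed inequality with $\mathcal{E}(F)$ on the right; invoking Lemma \ref{decay_entropy} to replace $\mathcal{E}(F)$ by $\mathcal{E}(F_0)$ finishes it. Strictly speaking one should add the two regional bounds directly to $\mathcal{E}(F) = \int h(F/\mu)\mu$ since the regions partition the domain and $h \geq 0$, so the sum of the two left-hand integrals is $\leq \mathcal{E}(F) \leq \mathcal{E}(F_0)$.

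For the second statement I would simply substitute the perturbation relation $F = \mu + \sqrt{\mu}\, f$, so that $F - \mu = \sqrt{\mu}\, f$ and hence $\frac{1}{\mu}|F-\mu|^2 = |f|^2$, while $|F - \mu| = \sqrt{\mu}\,|f|$; moreover the region $\{|F-\mu| \leq \mu\}$ becomes $\{|f| \leq \sqrt{\mu}\}$ and its complement becomes $\{|f| > \sqrt{\mu}\}$. Direct translation of the first inequality then gives the second one verbatim. There is no real obstacle in this lemma; the only point requiring a little care is verifying the two elementary inequalities $h(s) \geq \tfrac14(s-1)^2$ for $s \in (0,2]$ and $h(s) \geq \tfrac14(s-1)$ for $s > 2$ with clean constants, and being careful that the two integration regions are disjoint so their bounds genuinely add up under the single envelope $\mathcal{E}(F_0)$. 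I would present those two scalar inequalities as a short preliminary computation and then let the integration step be essentially one line each.
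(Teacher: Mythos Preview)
Your proposal is correct and follows essentially the same approach as the paper. The paper also splits into the two regions $\{|F-\mu|\le\mu\}$ and $\{|F-\mu|>\mu\}$ and uses the Taylor remainder $h(F/\mu)\mu=\frac{1}{2\tilde F}|F-\mu|^2$ with $\tilde F$ between $F$ and $\mu$ to obtain the same quadratic and linear lower bounds with constant $\tfrac14$; your version simply phrases this as the two scalar inequalities $h(s)\ge\tfrac14(s-1)^2$ on $(0,2]$ and $h(s)\ge\tfrac14(s-1)$ on $(2,\infty)$, which is an equivalent and slightly cleaner packaging of the same computation.
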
 
\begin{proof}
	It is noticed that 
	\begin{align*}
	F\ln F - \mu \ln \mu = (1+\ln \mu) (F-\mu) + \frac{1}{2\tilde{F}} \abs{F-\mu}^2,
	\end{align*}
	where $\tilde{F}$ is between $F$ and $\mu$ form Taylor expansion. Then, we compute 
	\begin{align*}
	\frac{1}{2\tilde{F}} \abs{F-\mu}^2 &= F\ln F -\mu \ln \mu - (1+\ln \mu) (F-\mu) \\ 
	&= h\left(\frac{F}{\mu}\right) \mu.
	\end{align*}
	Thus, 
	\begin{align*}
	\int_{\T^3\times \R^3} \frac{1}{2\tilde{F}} \abs{F-\mu}^2 \, dvdx = \int_{\T^3\times \R^3} h\left (\frac{F}{\mu}\right ) \mu
	\, dvdx, 
	\end{align*}
	which is uniformly in time bounded in terms of Lemma 3.1. For the left-hand side, we write 
	\begin{align*}
	1= \textbf{1}_{\abs{F-\mu} \leq \mu} + \textbf{1}_{\abs{F-\mu} > \mu}.
	\end{align*}
	Over $\{ \abs {F-\mu} > \mu\}$, we have $F > 2\mu$ and hence 
	\begin{align*}
	\frac{\abs{F-\mu}}{\tilde{F}} = \frac{F-\mu}{\tilde{F}} \geq \frac{F-\frac{1}{2}F}{F} =\frac{1}{2}.
	\end{align*}
	Over $\{\abs{F-\mu} \leq \mu\}$, we have $0 \leq F \leq 2\mu$ and hence 
	\begin{align*}
	\frac{1}{\tilde {F}} \geq \frac{1}{2\mu}.
	\end{align*}
	Therefore, we obtain from Lemma \ref{decay_entropy} 
	\begin{align*}
	&\int_{\T^3\times \R^3}  \frac{1}{4\mu} \abs{F-\mu}^2 \textbf{1}_{\abs{F-\mu} \leq \mu} \,dvdx 
	+ \int_{\T^3\times \R^3} \frac{1}{4}\abs{F-\mu} \textbf{1}_{\abs{F-\mu}>\mu} \, dvdx \\
	& \leq \int_{\T^3\times \R^3} h\left(\frac{F}{\mu}\right) \mu \, dvdx \leq \int_{\T^3\times \R^3}  h\left(\frac{F_0}{\mu}\right) \mu \, dvdx =\mathcal{E}(F_0),
	\end{align*}
	for any $t\geq 0$. 
\end{proof}

\section{A priori estimate}
Let $F$ satisfy \eqref{def.be}, and denote that $F(t,x,v)=\mu(v)+\sqrt{\mu(v)}f(t,x,v)$.
Then we can rewrite the Boltzmann equation \eqref{def.be} for $h=w_{q,\vartheta,\beta}f$ as
\begin{align}\label{def.be.h}
\p_th(t,x,v)+v\cdot\nabla_xh(t,x,v) + \tilde{\nu}(v,t)h(t,x,v) = K_wh(t) + w\Gamma(f,f)(t),
\end{align}
where
\begin{align*}
\tilde{\nu}(v,t) := \nu(v) + \frac{\vartheta q|v|^2}{8(1+t)^{\vartheta+1}} .
\end{align*}
Let us fix $T > 0$ and assume a priori bound
\begin{align} \label{apriori}
	\sup_{0\le t \le T}\|h(t)\|_{L^{\infty}}\le \bar{M}.
\end{align}

To get the sub-exponential time-decay property of the linearized solution operator $G_v(t,s)$, we consider the linearized equation as follow : 
	\begin{align}\label{linear equation}
		\p_th+v\cdot \nabla_xh+ \tilde{\nu}h = 0,
	\end{align}
where $\tilde{\nu}$ is defined in \eqref{nutilde}. Then the solution of \eqref{linear equation} can be written by 
\begin{align*}
h(t,x,v)
&= e^{-\int_0^t \tilde{\nu}(v,\tau) d\tau}h_0(x-tv,v).\\
\end{align*}
We can define the solution operator $G_v(t,s)$ as follow : 
\begin{align*}
G_v(t,s) : = e^{-\int_s^t \tilde{\nu}(v,\tau) d\tau}.
\end{align*}
\begin{proposition}\label{prop_apriori}
 Let $h(t,x,v)$ satisfy the equation \eqref{def.be.h} and  $\rho-1 = \frac{(1+\vartheta)\gamma}{2-\gamma}$. Assume that \eqref{weight} holds. Let $0<t\le T<\infty$. Then it holds that
\begin{equation}
	\|h(t)\|_{L^{\infty}} \le Ce^{-\frac{\lambda}{2}(1+t)^{\rho}}\|h_0\|_{L^{\infty}}\left(\int_0^t\|h(s)\|_{L^{\infty}}ds +1\right) + D,
\end{equation}
where $0<\delta\ll1, 0<\epsilon\ll1$, and $N\gg1$ can be chosen arbitrary small and large and,
\begin{align*}
D := C_{\bar{M}}\epsilon^{\gamma+3} +C_{\bar{M}}\left(N^{\gamma}+\frac{1}{N^{\frac{3+\gamma}{2}}}+\frac{1}{N+1}\right)+\frac{C_{\bar{M},\epsilon}}{N^2} +C_{\bar{M},N}\delta^{\frac{1}{p'}} + C_{\bar{M},N,\epsilon,\delta}\left(\mathcal{E}(F_0)^{\frac{1}{2}}+\mathcal{E}(F_0)^{\frac{1}{2p'}}\right).
\end{align*}
\end{proposition}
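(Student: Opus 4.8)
The proof is a Duhamel–iteration estimate for the weighted equation \eqref{def.be.h}. I would start by writing the mild formulation along the linear characteristics: for $0<t\le T$,
\[
	h(t,x,v) = G_v(t,0)h_0(x-tv,v) + \int_0^t G_v(t,s)\big[K_wh(s) + w\Gamma(f,f)(s)\big](X(s;t,x,v),v)\,\dd s .
\]
The first term is immediately controlled by the sub-exponential decay \eqref{nutilde estimate}, namely $G_v(t,0)\le Ce^{-\lambda(1+t)^\rho}$ after integrating the lower bound \eqref{nutilde} in time (this produces the exponent $\rho-1=\frac{(1+\vartheta)\gamma}{2-\gamma}$ and the clean prefactor $\|h_0\|_{L^\infty}$). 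The loss part $w\Gamma_-$ and the near-singular part $K_w^{1-\chi}$ are handled with a single Duhamel step: by Lemma \ref{Lemma_k.1} (estimate \eqref{k_esti.2}) and Lemma \ref{gamma estimate}, each contributes $\lesssim \epsilon^{\gamma+3}$ (times $C_{\bar M}$) after using $\int_0^t G_v(t,s)\nu(v)\,\dd s\le C$ and the $\mu^{(1-q)/8}$ Gaussian factor, giving the $C_{\bar M}\epsilon^{\gamma+3}$ term in $D$.

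**The second Duhamel iteration.** For the genuinely singular interaction I expand once more: inside $\int_0^t G_v(t,s)K_w^\chi h(s)$ I substitute the mild formula for $h(s)$ itself, producing a double time integral
\[
	\int_0^t\!\! G_v(t,s)\!\int_{u} k_w^\chi(v,u)\!\int_0^s\!\! G_u(s,s')\!\int_{u'} k_w(u,u')\,h(s',X',u')\,\dd u'\,\dd s'\,\dd u\,\dd s
\]
plus the analogous $\Gamma$ and $h_0$ contributions. The $h_0$ piece feeds the factor $\int_0^t\|h(s)\|_{L^\infty}\,\dd s$ (this is why that integral appears in the statement) with its $e^{-\frac\lambda2(1+t)^\rho}$ weight, since one loses half the decay rate transferring $G_v(t,0)$ out. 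To extract smallness from the double integral I cut the $u,u'$ velocity integrals into regions: large velocities $|v|\ge N$ or $|u|\ge N$ are killed by the polynomial/Gaussian kernel tails (Lemma \ref{Lemma_k.2}, \eqref{k2_esti.1}) giving the $C_{\bar M}(N^\gamma + N^{-(3+\gamma)/2} + (N+1)^{-1})$ term; the near-singular sub-kernel $k_w^{1-\chi}(u,u')$ on the bounded region contributes $\epsilon^{\gamma+3}$ as sketched after \eqref{issue}; and the $\Gamma$–$K$ cross term, where $p'>5$ clashes with only-$L^2$ integrability of $k_w^\chi$, is handled by Hölder exactly as in the introduction, yielding $\mathcal{E}(F_0)^{1/2}+\mathcal{E}(F_0)^{1/(2p')}$ via Lemma \ref{entropy_est}. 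The remaining "core" double integral — $k_w^\chi$ against $k_w^\chi$ on $|u|,|u'|\le N$, short time window $|s-s'|\le\delta$ or $|t-s|\le\delta$ — gives the $C_{\bar M,N}\delta^{1/p'}$ term, while its complement (a genuinely regularized, compactly supported, time-separated kernel) is where one reduces to $\|f\|_{L^2_{x,v}}$ by an $L^\infty$–$L^2$ change of variables $x\mapsto X(s';s,x,u)$, and then invokes Lemma \ref{entropy_est} to bound $\|f(s')\|_{L^2}$ by $\mathcal{E}(F_0)^{1/2}$, producing the $C_{\bar M,N,\epsilon,\delta}(\cdots)$ term and the leftover $C_{\bar M,\epsilon}N^{-2}$ from the $\langle v\rangle^{\gamma-2}$ gain in \eqref{k_esti.0}.

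**Main obstacle.** The delicate point — and the one I would spend most effort on — is the $\Gamma$–$K$ and $K^\chi$–$K^\chi$ double term: one must simultaneously (i) keep the $L^\infty$ norm factored out cleanly so that the remaining factor is a genuine $L^{p'}$ or $L^2$ norm of $f$ (not $h$ with full weight, which relative entropy does not control), (ii) absorb the soft-potential singularity $|v-u|^\gamma$, which unlike the hard-potential case is *not* helped by the time integral since $\int_0^t G_v(t,s)\,\dd s\sim\nu^{-1}(v)$ grows, and (iii) exploit the square-integrability of $k_w^\chi$ (Lemma \ref{Lemma_k.0}) together with Hölder to convert a $p'$-th power against a non-$L^{p'}$ kernel into an $L^2$ estimate, at the cost of the fractional entropy powers $\mathcal{E}(F_0)^{1/(2p')}$. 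Once every region is matched to its term in $D$, collecting the five Duhamel contributions and the two boundary-term pieces gives precisely the claimed inequality; the arbitrariness of $\delta,\epsilon,N$ is exactly what lets the subsequent bootstrap drive $D$ below the small-data threshold.
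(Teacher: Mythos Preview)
Your overall architecture matches the paper's: first Duhamel, second Duhamel on the hard pieces, velocity cutoffs at scale $N$, short-time window $\delta$, and Lemma~\ref{entropy_est} to close via the change of variables $u\mapsto y=x'-(s-s')u$. Two specific points need correction.

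First, $w\Gamma_-(f,f)$ does \emph{not} yield $\epsilon^{\gamma+3}$ in a single Duhamel step. Lemma~\ref{gamma estimate} gives
\[
|w\Gamma_-(f,f)(v)|\le C_\gamma\,\nu(v)\,\|h\|_{L^\infty}\Big(\int_{\R^3}|f(u)|^{p'}\,\dd u\Big)^{1/p'},
\]
with no $\epsilon$-cutoff at all; the $\epsilon^{\gamma+3}$ smallness is specific to $K_w^{1-\chi}$ via \eqref{k_esti.2}. The introduction's remark that $\Gamma_-$ ``is no big problem'' in the soft-potential case means only that one need not absorb it into the collision frequency as in the hard-potential papers; it is still treated exactly like $\Gamma_+$. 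In the paper both $\Gamma_+$ and $\Gamma_-$ (the terms $I_3,I_4$) get their own second Duhamel expansion: one substitutes the mild formula for $h$ \emph{inside the $L^{p'}_u$ integral} coming from Lemma~\ref{gamma estimate}, then splits the inner piece into $h_0$, $K_w^{1-\chi}$, $K_w^\chi$ (with velocity and time cutoffs), and $\Gamma_\pm$ again.

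Second, and relatedly, the factor $\int_0^t\|h(s)\|_{L^\infty}\,\dd s$ in the statement comes from the $\Gamma$ second iteration, not from the $K_w^\chi$ one. The $K_w^\chi$--then--$h_0$ piece ($I_{21}$ in the paper) contributes only $Ce^{-\frac{\lambda}{2}(1+t)^\rho}\|h_0\|_{L^\infty}$, because the two propagators concatenate as $G_v(t,s)G_u(s,0)\le Ce^{-\lambda(1+t)^\rho}$ and $\int_{|u|\le 2N}k_w^\chi(v,u)\,\dd u$ is bounded. It is the $\Gamma_\pm$--then--$h_0$ piece ($I_{31}$ in the paper) that carries the extra $\|h(s)\|_{L^\infty}$ from the outer Lemma~\ref{gamma estimate} factor; integrating in $s$ then produces exactly $Ce^{-\frac{\lambda}{2}(1+t)^\rho}\|h_0\|_{L^\infty}\int_0^t\|h(s)\|_{L^\infty}\,\dd s$. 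Once you reroute $\Gamma_\pm$ through a second Duhamel, your accounting of the remaining contributions to $D$ (velocity tails feeding $N^\gamma,\,N^{-(3+\gamma)/2},\,(N+1)^{-1}$; short window feeding $\delta^{1/p'}$; the $L^2$ change of variables feeding $\mathcal{E}(F_0)^{1/2}+\mathcal{E}(F_0)^{1/(2p')}$; and the $\langle v\rangle^{\gamma-2}$ gain in \eqref{k_esti.0} feeding $C_{\bar M,\epsilon}N^{-2}$) is correct and matches the paper.
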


\begin{proof}
Take $(t,x,v) \in (0,T]\times\mathbb{T}^3\times\mathbb{R}^3$. By Duhamel's principle, it holds that
\begin{align*}
|h(t,x,v)| 
&\le |G_v(t,0)h_0(x-tv,v)|\\
&\indent + \int_0^t|G_v(t,s)|[|K_wh(s,x-(t-s)v,v)| + |w\Gamma_+(f,f)(s,x-(t-s)v,v)| + |w\Gamma_-(f,f)(s,x-(t-s)v,v)|] ds\\
&= I_1 + I_2 + I_3+ I_4.
\end{align*}
First of all, note that $G_v(t,s)\le e^{-\nu(v)(t-s)}$ and we can get
\begin{align*}
\int_0^t G_v(t,s)\nu(v) ds \le \int_0^t e^{-\nu(v)(t-s)}\nu(v) ds =1-e^{-\nu(v)t} \le 1.
\end{align*}
By extracting the collision frequency $\nu$ from the other terms, we will deal with the time terms $G_{v}(t,s)$ and $G_u(s,s')$ as above except for the terms including $h_0$. For \eqref{nutilde}, notice that $(1+|v|)\textbf{1}_{|v|\le 1} \le 2$ and $\textbf{1}_{|v|\ge 1}\le |v|^2$. Then it follows that
\begin{align*}
1+|v|^2 + \nu(v)
&= \textbf{1}_{|v|\le 1} + \textbf{1}_{|v|\ge 1} + |v|^2 + \nu(v)\\
&\le 2^{-\gamma}(1+|v|)^{\gamma} + |v|^2 + |v|^2 + \nu(v)\\
&\le C(|v|^2 + \nu(v)).
\end{align*}
Denote $\displaystyle a=\frac{\gamma-2}{\gamma}$ and $\displaystyle b=\frac{2-\gamma}{2}$. Then $\frac{1}{a} + \frac{1}{b}=1$ and $a>1, b>1$. Using the above inequality and  Young's inequality, we gain
\begin{align}\label{nutilde estimate}
\begin{split}
\tilde{\nu}(v,t)
&= \frac{\vartheta q|v|^2}{8(1+t)^{\vartheta+1}} + \nu(v)\\
&\ge C\left\{(1+t)^{-(1+\vartheta)}[|v|^2+\nu(v)]+(1-(1+t)^{-(1+\vartheta)})\nu(v))\right\}\\
&\ge C\left\{(1+t)^{-(1+\vartheta)}[1+|v|^2+\nu(v)]+(1-(1+t)^{-(1+\vartheta)})\nu(v)\right\}\\
&\ge C\left\{(1+t)^{-\frac{a(1+\vartheta)}{a}}\nu(v)^{\frac{2a}{a\gamma}}+\nu(v)^{\frac{b}{b}}\right\}\\
&\ge C\left\{(1+t)^{-\frac{1+\vartheta}{a}}\nu(v)^{\frac{2}{a\gamma}}\nu(v)^{\frac{1}{b}}\right\} = C(1+t)^{\frac{(1+\vartheta)\gamma}{2-\gamma}}.
\end{split}
\end{align}
For $I_1$, by the $\tilde{\nu}$ estimate \eqref{nutilde estimate}, we obtain
\begin{align}\label{I_1 estimate}
\begin{split}
I_1
&\le e^{-\int_0^t C(1+\tau)^{\rho-1} d\tau}\|h_0\|_{L^{\infty}} \le Ce^{-\lambda(1+t)^{\rho}}\|h_0\|_{L^{\infty}},
\end{split}
\end{align}
where $\lambda=\frac{C}{\rho}>0$. For $I_2$, we divde $I_2$ into four cases where i) $|v| \ge N$, ii) $|v|\le N$ and $|u| \ge 2N$, iii) $|v|\le N$, $|u|\le 2N$, and $|u^*|\ge3N$, and iv) $|v|\le N$, $|u|\le 2N$, and $|u^*|\le3N$.\\
\\
\indent \textbf{(Case 1)} $|v|\ge N$\\
\\
See that $\nu(v) \sim (1+v)^{\gamma}$ and $\nu(v)^{-1}\mu(v)^{\frac{1-q}{16}}$ is bounded for $|v|\ge N$. By Lemma \ref{Lemma_k.1}, $I_2\textbf{1}_{\{|v|\ge N\}}$ is bounded by
\begin{align}\label{I_2 case1}
\begin{split}
 I_2\textbf{1}_{\{|v|\ge N\}}
&\le \int_0^t e^{-\nu(v)(t-s)}\textbf{1}_{\{|v|\ge N\}}\left[\left|K_w^{1-\chi}h(s)\right| + \left|K_w^{\chi}h(s)\right|\right] ds\\
&\le \int_0^t e^{-\nu(v)(t-s)}\nu(v)\textbf{1}_{\{|v|\ge N\}}\left[C\nu^{-1}(v)\mu^{\frac{1-q}{8}}(v)\epsilon^{\gamma+3} + C_{\epsilon}\nu^{-1}(v)\langle v \rangle^{\gamma-2}\right]\|h(s)\|_{L^{\infty}} ds\\
&\le \int_0^t e^{-\nu(v)(t-s)}\nu(v)\left[C\mu^{\frac{1-q}{16}}(v)\epsilon^{\gamma+3} + \frac{C_{\epsilon}}{1+N^2}\right]\|h(s)\|_{L^{\infty}} ds\\
&\le \left(C\epsilon^{\gamma+3} + \frac{C_{\epsilon}}{N^2}\right)\sup_{0\le s \le T}\|h(s)\|_{L^{\infty}}.\\
\end{split}
\end{align}
\indent \textbf{(Case 2)} $|v| \le N$ and $|u| \ge 2N$\\
\\
Observe that $|u-v|\ge N$ and \eqref{e.4} holds although there is no $\frac{1}{|\eta|}$ in \eqref{e.4}. It follows from Lemma \ref{Lemma_k.2}, \eqref{e.1}, \eqref{e.3}, and \eqref{e.4} that
\begin{align}\label{case2 k_w,2}
\begin{split}
\int_{|u|\ge 2N} k_{w,2}(v,u)\textbf{1}_{\{|v|\le N\}} du 
&\le \int_{|u|\ge2N}\frac{C_{\gamma}}{|v-u|^{\frac{3-\gamma}{2}}}\textbf{1}_{\{|v|\le N\}}e^{-\frac{|v-u|^2}{8}-\frac{||u|^2-|v|^2|^2}{8|u-v|^2}}\frac{w_{q,\vartheta,\beta}(v)}{w_{q,\vartheta,\beta}(u)}du\\
&\le \frac{C_{\gamma}}{N^{\frac{3-\gamma}{2}}}\int_{|u|\ge2N}e^{-\frac{|v-u|^2}{8}-\frac{||u|^2-|v|^2|^2}{8|u-v|^2}}\frac{w_{q,\vartheta,\beta}(v)}{w_{q,\vartheta,\beta}(u)}du\\
&\le \frac{C_{\gamma}}{N^{\frac{3-\gamma}{2}}(1+|v|)}.
\end{split}
\end{align}
Note that $\nu(v)^{-1}(1+|v|^2)^{\beta}e^{-\frac{1-q}{4}|v|^2}$ is bounded for $|v|\le N$. Then we can gain by Lemma \ref{Lemma_k.2} and \eqref{case2 k_w,2}
\begin{align}\label{I_2 case2}
\begin{split}
I_2\textbf{1}_{\{|v|\le N, |u| \ge 2N\}}
&\le \int_0^t e^{-\nu(v)(t-s)}\textbf{1}_{\{|v|\le N\}}\int_{|u|\ge 2N}\left(|k_{w,1}(v,u)h(s)| + |k_{w,2}(v,u)h(s)|\right) du ds \\
&\le \int_0^t e^{-\nu(v)(t-s)}\|h(s)\|_{L^{\infty}}\textbf{1}_{\{|v|\le N\}}\\
&\indent \times\int_{|u|\ge 2N}C(1+|v|^2)^{\beta}|v-u|^{\gamma}e^{-\frac{1-q}{4}|v|^2}e^{-\frac{|u|^2}{4}} + \frac{C_{\gamma}}{|v-u|^{\frac{3-\gamma}{2}}}e^{-\frac{|v-u|^2}{8}-\frac{||u|^2-|v|^2|^2}{8|u-v|^2}}\frac{w_{q,\vartheta,\beta}(v)}{w_{q,\vartheta,\beta}(u)}du ds\\
&\le \int_0^t e^{-\nu(v)(t-s)}\nu(v)\|h(s)\|_{L^{\infty}}\textbf{1}_{\{|v|\le N\}}\\
&\indent \times\left(\int_{|u|\ge 2N}CN^{\gamma}\nu(v)^{-1}(1+|v|^2)^{\beta}e^{-\frac{1-q}{4}|v|^2}e^{-\frac{|u|^2}{4}}du + \frac{C_{\gamma}\nu(v)^{-1}}{N^{\frac{3-\gamma}{2}}(1+|v|)}\right)ds\\
&\le \int_0^t e^{-\nu(v)(t-s)}\nu(v)\|h(s)\|_{L^{\infty}} \left(CN^{\gamma}\int_{|u|\ge 2N}e^{-\frac{|u|^2}{4}} du + \frac{C_{\gamma}}{(1+|v|)N^{\frac{3+\gamma}{2}}} \right) ds\\
&\le \left(CN^{\gamma} + \frac{C_{\gamma}}{N^{\frac{3+\gamma}{2}}}\right)\sup_{0\le t \le T}\|h(s)\|_{L^{\infty}},
\end{split}
\end{align}
where $\gamma <0$ and $\frac{\gamma+3}{2}>0$.\\
\\
\indent \textbf{(Case 3)} $|v|\le N$, $|u| \le 2N$, and $|u^*|\ge 3N$,\\
\\
Denote $h(s')=h(s',x-(t-s)v-(s-s')u,u^*)$. We can split $I_2\textbf{1}_{\{|v|\le N, |u| \le 2N, |u^*|\ge 3N \}}$ by Lemma \ref{gamma estimate} into
\begin{align*}
I_2\textbf{1}_{\{|v|\le N, |u| \le 2N, |u^*|\ge 3N \}}
&= \int_0^t \int_{|u|\le 2N} \int_0^s G_v(t,s)G_u(s,s')k_w(v,u)\textbf{1}_{\{|v|\le N\}}\\
&\indent \times \left[|K_wh(s')| + |w\Gamma_+(f,f)(s')| + |w\Gamma_-(f,f)(s')|\right] ds'duds\\
&\le J_1 + J_2 + J_3,
\end{align*}
where
\begin{align*}
J_1 
&:= \int_0^t \int_{|u|\le 2N} \int_0^s \int_{|u*|\ge 3N}e^{-\nu(v)(t-s)}e^{-\nu(u)(s-s')}\textbf{1}_{\{|v|\le N\}}k_w(v,u)k_w(u,u^*)|h(s')| du^*ds'duds,\\
J_2
&:= C\int_0^t \int_{|u|\le 2N} \int_0^s e^{-\nu(v)(t-s)}e^{-\nu(u)(s-s')}\nu(u)k_w(v,u)\textbf{1}_{\{|v|\le N\}}\|h(s')\|_{L^{\infty}}\\
&\indent \times\left(\int_{|u^*|\ge 3N} (1+|u^*|)^{-2\beta p' + 16}|h(u^*)|^{p'}du^*\right)^{\frac{1}{p'}}ds'duds,\\
J_3
&:= C\int_0^t \int_{|u|\le 2N} \int_0^s e^{-\nu(v)(t-s)}e^{-\nu(u)(s-s')}\nu(u)k_w(v,u)\textbf{1}_{\{|v|\le N\}}\|h(s')\|_{L^{\infty}}\\
&\indent \times\left(\int_{|u^*|\ge 3N} \frac{1}{w_{q,\vartheta,\beta}(u^*)^{p'}}|h(u^*)|^{p'}du^*\right)^{1/p'}ds'duds.
\end{align*}
For $J_1$, notice that $|u^*-u|\ge N$, $|u-v|^{-\gamma} \le (1+|u-v|)^{-\gamma}$ and \eqref{e.3} holds although $(1+|\eta|)^{\beta}$ is replaced to $(1+|\eta|)^{\beta-\gamma}$. Then we can get by Lemma \ref{Lemma_k.2}, \eqref{e.1}, \eqref{e.3}, and \eqref{e.4}
\begin{align}\label{case3 k_w,2}
\begin{split}
\int_{|u^*|\ge 3N} k_{w,2}(u,u^*) \textbf{1}_{\{|u|\le 2N\}}du^*
&\le \int_{|u^*|\ge 3N} \textbf{1}_{\{|u|\le 2N\}}\frac{C_{\gamma}|u^*-u|^{-\gamma}}{|u^*-u|^{\frac{3-3\gamma}{2}}}e^{-\frac{|u^*-u|^2}{8}-\frac{||u|^2-|u^*|^2|^2}{8|u-u^*|^2}}\frac{w_{q,\vartheta,\beta}(u)}{w_{q,\vartheta,\beta}(u^*)} du^*\\
&\le \frac{C_{\gamma}}{N^{\frac{3-3\gamma}{2}}}\int_{|u*|\ge 3N}|u^*-u|^{-\gamma}e^{-\frac{|u^*-u|^2}{8}-\frac{||u|^2-|u^*|^2|^2}{8|u-u^*|^2}}\frac{w_{q,\vartheta,\beta}(u)}{w_{q,\vartheta,\beta}(u^*)}du^*\\
&\le \frac{C_{\gamma}}{N^{\frac{3-3\gamma}{2}}(1+|u|)}.
\end{split}
\end{align}
Recall that $k_w(v,u)$ is integrable on $\{|u|\le 2N\}$. From \eqref{case2 k_w,2} and Lemma \ref{Lemma_k.2}, $J_1$ is bounded similarly to the estimate of $I_2$ for $|v|\le N$ and $|u|\ge 2N$ by
\begin{align}\label{J_1 estimate}
\begin{split}
J_1
&= \int_0^t \int_{|u|\le 2N} \int_0^s \int_{|u*|\ge 3N}e^{-\nu(v)(t-s)}e^{-\nu(u)(s-s')}\textbf{1}_{\{|v|\le N\}}k_w(v,u)k_w(u,u^*)|h(s')| du^*ds'duds\\
&\le \int_0^t \int_{|u|\le 2N} \int_0^s e^{-\nu(v)(t-s)}e^{-\nu(u)(s-s')}\textbf{1}_{\{|v|\le N\}}k_w(v,u)\int_{|u*|\ge 3N}(k_{w,1}(u,u^*)+k_{w,2}(u,u^*))|h(s')| du^*ds'duds \\
&\le \int_0^t \int_{|u|\le 2N} \int_0^s e^{-\nu(v)(t-s)}e^{-\nu(u)(s-s')}\textbf{1}_{\{|v|\le N\}}k_w(v,u)\\
&\indent \times\left(\int_{|u*|\ge 3N}C(1+|u|^2)^{\beta}|u^*-u|^{\gamma}e^{-\frac{1-q}{4}|u|^2}e^{-\frac{|u^*|^2}{4}}|h(s')|du^*+\frac{C_{\gamma}}{N^{\frac{3-3\gamma}{2}}(1+|u|)}\sup_{0\le s' \le T}\|h(s')\|_{L^{\infty}}\right) ds'duds \\
&\le C\int_0^t \int_{|u|\le 2N} \int_0^s \nu(v)e^{-\nu(v)(t-s)}\nu(u)e^{-\nu(u)(s-s')}\textbf{1}_{\{|v|\le N\}}k_w(v,u)\\
&\indent \times\nu(u)^{-1}\nu(v)^{-1}\left(N^{\gamma}\nu(u)e^{-\frac{9}{8}N^2}\int_{|u*|\ge 3N}e^{-\frac{|u^*|^2}{8}}|h(s')|du^* + \frac{C_{\gamma}}{N^{\frac{3-3\gamma}{2}}}\sup_{0\le s' \le T}\|h(s')\|_{L^{\infty}}\right)ds'duds \\
&\le C\int_0^t \int_{|u|\le 2N} \int_0^s \nu(v)e^{-\nu(v)(t-s)}\nu(u)e^{-\nu(u)(s-s')}k_w(v,u)\\
&\indent \times\left\{\left(1+\frac{1}{N}\right)^{\gamma}e^{-\frac{9}{8}N^2} + \frac{\{(1+N)(1+2N)\}^{-\gamma}}{N^{\frac{3-3\gamma}{2}}}\right\}\sup_{0\leq s' \leq T} \|h(s')\|_{L^{\infty}}ds'duds\\
&\le C\int_0^t \int_{|u|\le 2N} \nu(v)e^{-\nu(v)(t-s)}k_w(v,u)\left(e^{-\frac{9}{8}N^2} + \frac{1}{N^{\frac{3+\gamma}{2}}}\right)\sup_{0\le s' \leq T}\|h(s')\|_{L^{\infty}}duds\\
&\le C\left(e^{-\frac{9}{8}N^2} + \frac{1}{N^{\frac{3+\gamma}{2}}}\right)\sup_{0\le s' \le T}\|h(s')\|_{L^{\infty}},\\
\end{split}
\end{align}
because $(1+|u|^2)^{\beta}e^{-\frac{1-q}{4}|u|^2} \le C\nu(u)$ for $|u|\le 2N$. For $J_2$, denote that $h(u^*) = h(s',x-(t-s)v-(s-s')u,u^*)$ and we use Lemma \ref{gamma estimate} to get
\begin{align}\label{J_2 estimate}
\begin{split}
J_2
&= C\int_0^t \int_{|u|\le 2N} \int_0^s e^{-\nu(v)(t-s)}e^{-\nu(u)(s-s')}\nu(u)k_w(v,u)\textbf{1}_{\{|v|\le N\}}\|h(s')\|_{L^{\infty}}\\
&\indent \times\left(\int_{|u^*|\ge 3N} (1+|u^*|)^{-2\beta p' + 16}|h(u^*)|^{p'}du^*\right)^{\frac{1}{p'}}ds'duds\\
&\le C\int_0^t \int_{|u|\le 2N} \int_0^s \nu(v)e^{-\nu(v)(t-s)}e^{-\nu(u)(s-s')}\nu(u)k_w(v,u)\textbf{1}_{\{|v|\le N\}}\|h(s')\|_{L^{\infty}}\\
&\indent \times\frac{\nu^{-1}(v)}{(1+3N)^3}\left(\int_{|u^*|\ge 3N} (1+|u^*|)^{-(2\beta-3)p' + 16}|h(u^*)|^{p'}du^*\right)^{1/p'}ds'duds\\
&\le C\int_0^t \int_{|u|\le 2N} \int_0^s \nu(v)e^{-\nu(v)(t-s)}e^{-\nu(u)(s-s')}\nu(u)k_w(v,u)\|h(s')\|^2_{L^{\infty}}\\
&\indent \times\frac{(1+N)^{-\gamma}}{(1+3N)^3}\left(\int_{|u^*|\ge 3N} (1+|u^*|)^{-(2\beta-3)p' + 16} du^*\right)^{1/p'}ds'duds\\
&\le \frac{C}{(1+3N)^{3+\gamma}}\int_0^t \int_{|u|\le 2N} \int_0^s \nu(v)e^{-\nu(v)(t-s)}e^{-\nu(u)(s-s')}\nu(u)k_w(v,u)\|h(s')\|^2_{L^{\infty}}ds'duds\\
&\le \frac{C}{(1+3N)^{3+\gamma}}\sup_{0\le s' \le T}\|h(s')\|^2_{L^{\infty}},
\end{split}
\end{align}
since $p'\ge 5$ and $\beta \ge 7/2$ yield  $-(2\beta-3)p' + 16 < -3$ and $k_w(u,v)$ is integrable on $\{|u|\le 2N\}$. For $J_3$, denote that $h(u^*) = h(s',x-(t-s)v-(s-s')u,u^*)$ and we can get
\begin{align*}
J_3
&= C\int_0^t \int_{|u|\le 2N} \int_0^s e^{-\nu(v)(t-s)}e^{-\nu(u)(s-s')}\nu(u)k_w(u,v)\textbf{1}_{\{|v|\le N\}}\|h(s')\|_{L^{\infty}}\\
&\indent \times\left(\int_{|u^*|\ge 3N} \frac{1}{w_{q,\vartheta,\beta}(u^*)^{p'}}|h(u^*)|^{p'}du^*\right)^{1/p'}ds'duds\\
&\le C\int_0^t \int_{|u|\le 2N} \int_0^s e^{-\nu(v)(t-s)}e^{-\nu(u)(s-s')}\nu(u)k_w(u,v)\textbf{1}_{\{|v|\le N\}}\|h(s')\|_{L^{\infty}}\\
&\indent \times\left(\int_{|u^*|\ge 3N} (1+|u^*|)^{-2\beta p' + 16}|h(u^*)|^{p'}du^*\right)^{1/p'}ds'duds.\\
\end{align*}
By the above estimate, we bound $J_3$ similarly to an estimate of $J_2$ by
\begin{align}\label{J_3 estimate}
J_3\le \frac{C}{(1+3N)^{3+\gamma}}\sup_{0\le s' \le T}\|h(s')\|^2_{L^{\infty}}.
\end{align}
Combining \eqref{J_1 estimate}, \eqref{J_2 estimate}, and \eqref{J_3 estimate} altogether, we obtain
\begin{align}\label{I_2 case3}
I_2\textbf{1}_{\{|v|\le N, |u| \le 2N, |u^*|\ge 3N \}}
&\le C_{\bar{M}}\left(e^{-\frac{9}{8}N^2}+\frac{1}{N^{\frac{3+\gamma}{2}}}+\frac{1}{(1+3N)^{3+\gamma}}\right).
\end{align}
\indent \textbf{(Case 4)} $|v|\le N$, $|u|\le 2N$, and $|u^*|\le 3N$\\
\\
By Duhamel's principle and Lemma \ref{gamma estimate}, we can separate $I_2\textbf{1}_{\{|v|\le N, |u| \le 2N, |u^*|\le 3N\}}$ into
\begin{align*}
I_2\textbf{1}_{\{|v|\le N, |u| \le 2N, |u^*|\le 3N\}}
&= \int_0^t  G_v(t,s)\textbf{1}_{\{|v|\le N\}}|K_wh(s)| ds\\
&\le \int_0^t G_v(t,s)\textbf{1}_{\{|v|\le N\}}[|K_w^{1-\chi}h(s)| + |K_w^{\chi}h(s)|]ds\\
&\le I_{20}+I_{21}+I_{22}+I_{23} + I_{24}+I_{25},
\end{align*}
where
\begin{align*}
I_{20}
&:=\int_0^t e^{-\nu(v)(t-s)}\textbf{1}_{\{|v|\le N\}}|K_w^{1-\chi}h(s)|ds,\\
I_{21}
&:=\int_0^t \int_{|u|\le 2N} G_v(t,s)G_u(s,0)\textbf{1}_{\{|v|\le N\}}k_w^{\chi}(v,u)|h_0| duds,\\
I_{22}
&:=\int_0^t \int_{|u|\le 2N} \int_0^{s-\delta}\int_{|u^*|\le 3N}e^{-\nu(v)(t-s)}e^{-\nu(u)(s-s')}\textbf{1}_{\{|v|\le N\}}k_w^{\chi}(v,u)k_w(u,u^*)|h(s')| du^*ds'duds,\\
I_{23}
&:=C\int_0^t \int_{|u|\le 2N}\int_0^{s-\delta} e^{-\nu(v)(t-s)}\textbf{1}_{\{|v|\le N\}}k_w^{\chi}(v,u)\nu(u)e^{-\nu(u)(s-s')}\|h(s')\|_{L^{\infty}}\\
&\indent \times\left(\int_{|u^*|\le 3N}(1+|u^*|)^{-2\beta p'+16}|h(u^*)|^{p'}du^*\right)^{1/p'}ds'duds,\\
I_{24}
&:= C\int_0^t \int_{|u|\le 2N} \int_0^{s-\delta}e^{-\nu(v)(t-s)}e^{-\nu(u)(s-s')}\textbf{1}_{\{|v|\le N\}}k_w^{\chi}(v,u)\nu(u)\|h(s')\|_{L^{\infty}}\\
&\indent \times\left(\int_{|u^*|\le 3N}w_{q,\vartheta,\beta}(u^*)^{-p'}|h(u^*)|^{p'}du^*\right)^{1/p'}ds'duds,\\
I_{25}
&:=\int_0^t \int_{|u|\le 2N} \int_{s-\delta}^s e^{-\nu(v)(t-s)}e^{-\nu(u)(s-s')}\textbf{1}_{\{|v|\le N\}}k_w^{\chi}(v,u)\\
&\indent \times\left(|K_wh(s')| +|w\Gamma_+(f,f)(s')|+|w\Gamma_-(f,f)(s')|\right) ds'duds.
\end{align*}
For $I_{20}$, it follows from Lemma \ref{Lemma_k.1} that
\begin{align}\label{I_20 estimate}
\begin{split}
I_{20}
&=\int_0^t  e^{-\nu(v)(t-s)}\textbf{1}_{\{|v|\le N\}}|K_w^{1-\chi}h(s)|ds\\
&\le C\int_0^t e^{-\nu(v)(t-s)}\mu(v)^{\frac{1-q}{8}}\epsilon^{\gamma+3}\|h(s)\|_{L^{\infty}} ds\\
&\le C\epsilon^{\gamma+3}\sup_{0 \le s \le T}\|h(s)\|_{L^{\infty}}.
\end{split}
\end{align}
For $I_{21}$, recollect that $k_w^{\chi}(v,u) \le k_w(v,u)$ and $k_w(v,u)$ is integrable on $\{|u|\le 2N\}$. Then, by \eqref{nutilde estimate}, we obtain
\begin{align}\label{I_21 estimate}
\begin{split}
I_{21}
&=\int_0^t \int_{|u|\le 2N} G_v(t,s)G_u(s,0)\textbf{1}_{\{|v|\le N\}}k_w^{\chi}(v,u)|h_0| duds\\
&\le \int_0^t \int_{|u|\le 2N} G_v(t,s)G_u(s,0)k_w(v,u)|h_0| duds\\
&\le \int_0^t \int_{|u|\le 2N} e^{-\int_s^t \tilde{\nu}(v,\tau) d\tau}e^{-\int_0^s \tilde{\nu}(u,\tau) d\tau}k_w(v,u)|h_0| duds\\
&\le C\int_0^t \int_{|u|\le 2N} e^{-\lambda\{(1+t)^{\rho}-(1+s)^{\rho}\}}e^{-\lambda(1+s)^{\rho}}k_w(v,u)|h_0| duds\\
&\le C\int_0^t \int_{|u|\le 2N} e^{-\lambda(1+t)^{\rho}}k_w(v,u)|h_0| duds\\
&\le Ce^{-\frac{\lambda}{2}(1+t)^{\rho}}\|h_0\|_{L^{\infty}},
\end{split}
\end{align}
because $te^{-\frac{\lambda}{2}(1+t)^{\rho}}$ is bounded for $t\ge0$. For $I_{22}$, we can divide $I_{22}$ into
\begin{align*}
I_{22}
&=\int_0^t \int_{|u|\le 2N} \int_0^{s-\delta}\int_{|u^*|\le 3N}e^{-\nu(v)(t-s)}e^{-\nu(u)(s-s')}\textbf{1}_{\{|v|\le N\}}k_w^{\chi}(v,u)k_w(u,u^*)|h(s')| du^*ds'duds\\
&= \int_0^t \int_{|u|\le 2N} \int_0^{s-\delta}\int_{|u^*|\le 3N}e^{-\nu(v)(t-s)}e^{-\nu(u)(s-s')}\textbf{1}_{\{|v|\le N\}}k_w^{\chi}(v,u)k_w^{1-\chi}(u,u^*)|h(s')| du^*ds'duds\\
&\indent + \int_0^t \int_{|u|\le 2N} \int_0^{s-\delta}\int_{|u^*|\le 3N} e^{-\nu(v)(t-s)}e^{-\nu(u)(s-s')}\textbf{1}_{\{|v|\le N\}}k_w^{\chi}(v,u)k_w^{\chi}(u,u^*)|h(s')| du^*ds'duds\\
&=: I_{221} + I_{222}.
\end{align*}
For $I_{221}$, recall that $k_w^{\chi}(v,u) \le k_w(v,u)$ and $k_w(v,u)$ is integrable on $\{|u|\le 2N\}$. Then by Lemma \ref{Lemma_k.1} we have
\begin{align}\label{I_221 estimate}
\begin{split}
I_{221}
&= \int_0^t \int_{|u|\le 2N}\int_0^{s-\delta} e^{-\nu(v)(t-s)}e^{-\nu(u)(s-s')}\textbf{1}_{\{|v|\le N\}}k_w^{\chi}(v,u)\int_{|u^*|\le 3N}k_w^{1-\chi}(u,u^*)|h(s')|du^*ds'duds\\
&\le C\int_0^t \int_{|u|\le 2N}\int_0^{s-\delta} e^{-\nu(v)(t-s)}e^{-\nu(u)(s-s')}k_w^{\chi}(v,u) \mu(u)^{\frac{1-q}{8}}\epsilon^{\gamma+3}\Vert h(s')\Vert_{L^\infty} ds'duds\\
&= C\epsilon^{\gamma+3}\int_0^t \int_{|u|\le 2N}\int_0^{s-\delta} e^{-\nu(v)(t-s)}e^{-\nu(u)(s-s')}\\
&\indent \times k^{\chi}(v,u)\frac{(1+|v|^2)^{\beta+3}}{(1+|u|^2)^{\beta+3}}\frac{(1+|u|^2)^{3}}{(1+|v|^2)^{3}} \frac{w_{q,\vartheta,0}(v)}{w_{q,\vartheta,0}(u)}\mu(u)^{\frac{1-q}{8}}\Vert h(s')\Vert_{L^\infty} ds'duds\\
&\le C\epsilon^{\gamma+3}\int_0^t \int_{|u|\le 2N}\int_0^{s-\delta} \nu(v)e^{-\nu(v)(t-s)}\nu(u)e^{-\nu(u)(s-s')}k^{\chi}_{w_{q,\vartheta,\beta+3}}(v,u)\Vert h(s') \Vert_{L^\infty} ds'duds\\
&\le C \epsilon^{\gamma+3}\int_0^t \int_{|u|\le 2N}\int_0^{s-\delta} \nu(v)e^{-\nu(v)(t-s)}\nu(u)e^{-\nu(u)(s-s')}k_{w_{q,\vartheta,\beta+3}}(v,u)\Vert h(s') \Vert_{L^\infty}  ds'duds\\
&\le C \epsilon^{\gamma+3}\sup_{0\le s' \le T}\|h(s')\|_{L^{\infty}},
\end{split}
\end{align}
since $(1+|u|^2)^3\nu(u)^{-1}\mu(u)^{\frac{1-q}{8}}$ is bounded for $|u|\le 2N$ and $(1+|v|^2)^{-3} \le C\nu(v)$ for $|v|\le N$. For $I_{222}$, we can get
\begin{align*}
I_{222}
&\le \int_0^t \int_{|u|\le 2N}\int_0^{s-\delta} e^{-\nu(v)(t-s)}\nu(u)^{-1}\nu(u)e^{-\nu(u)(s-s')}\textbf{1}_{\{|v|\le N\}}\int_{|u^*|\le 3N}k_w^{\chi}(v,u)k_w^{\chi}(u,u^*)|h(s')|du^*ds'duds\\
&\le (1+2N)^{-\gamma}\int_0^t e^{-\nu(v)(t-s)}\textbf{1}_{\{|v|\le N\}} \sup_{0\le s' \le s-\delta}\left(\int_{|u|\le 2N}\int_{|u^*|\le 3N}k_w^{\chi}(v,u)k_w^{\chi}(u,u^*)|h(s')|du^*du\right)ds\\
&\le (1+2N)^{-\gamma}\int_0^t e^{-\nu(v)(t-s)}\textbf{1}_{\{|v|\le N\}}\\
&\indent \times\sup_{0\le s' \le s-\delta}\|k_w^{\chi}(v,u)k_w^{\chi}(u,u^*)\|_{L^2_{u,u^*}}\left(\int_{|u|\le 2N}\int_{|u^*|\le 3N}|h(s',x-(t-s)v-(s-s')u,u^*)|^2 du^*du\right)^{1/2} ds,\\
\end{align*}
in that $k_w^{\chi}(v,u)k_w^{\chi}(u,u^*) \in L^2(\{|u|\le 2N\}\times \{|u^*|\le 3N\})$. We change the variables as follow and denote $x'=x-(t-s)v$. Then we have
\begin{align}\label{change of variable}
\begin{split}
y=x-(t-s)v-(s-s')u=x'-(s-s')u,\indent\left|\det \frac{\p(y,u^*)}{\p(u,u^*)} \right| = \frac{1}{|s-s'|^3}.
\end{split}
\end{align}
Besides, $\|h(s')\|_{L^2_xL^2_v\left(\{\abs{u*}\leq 3N\}\right)}$ can be bounded from Lemma \ref{entropy_est} by 
\begin{align}\label{mathcalE}
\begin{split}
&\int_{y\in \T^3}\int_{|u^*|\le 3N} |h(s',y,u^*)|^2 du^*dy\\
&\le \int_{y\in \T^3}\int_{|u^*|\le 3N} |h(s',y,u^*)|^2\textbf{1}_{|f|\le \sqrt{\mu}} du^*dy + \int_{y\in \T^3}\int_{|u^*|\le 3N} |h(s',y,u^*)|^2\textbf{1}_{|f|\ge \sqrt{\mu}} du^*dy\\
&\le \int_{y\in \T^3}\int_{|u^*|\le 3N} w^2_{q,\vartheta,\beta}(u^*)|f(s',y,u^*)|^2\textbf{1}_{\{|f|\le \sqrt{\mu}\}} du^*dy\\
&\indent + \sup_{0\le s'\le T}\|h(s')\|_{L^{\infty}}\int_{y\in \T^3}\int_{|u^*|\le 3N} w_{q,\vartheta,\beta}(u^*)|f(s',y,u^*)|\textbf{1}_{\{|f|\ge \sqrt{\mu}\}} du^*dy\\
&\le C_N\int_{y\in \T^3}\int_{|u^*|\le 3N}|f(s',y,u^*)|^2\textbf{1}_{\{|f|\le \sqrt{\mu}\}} du^*dy\\
&\indent + C_N\sup_{0\le s'\le T}\|h(s')\|_{L^{\infty}}\int_{y\in \T^3}\int_{|u^*|\le 3N} \sqrt{\mu(u^*)}|f(s',y,u^*)|\textbf{1}_{\{|f|\ge \sqrt{\mu}\}} du^*dy\\
&\le C_{N,\bar{M}}\mathcal{E}(F_0),
\end{split}
\end{align}
because $w_{q,\vartheta,\beta}(u^*)$ and $\sqrt{\mu(u^*)}$ are bounded for $|u^*| \le 3N$.
By the change of the variables \eqref{change of variable} and \eqref{mathcalE}, it follows that
\begin{align}\label{I_222 estimate}
\begin{split}
I_{222}
&\le (1+2N)^{-\gamma}\int_0^t e^{-\nu(v)(t-s)}\textbf{1}_{\{|v|\le N\}}\\
&\indent \times\sup_{0\le s' \le s-\delta}\|k_w^{\chi}(v,u)k_w^{\chi}(u,u^*)\|_{L^2_{u,u^*}}\left(\int_{|u|\le 2N}\int_{|u^*|\le 3N}|h(s',x'-(s-s')u,u^*)|^2 du^*du\right)^{1/2} ds\\
&\le C_{\epsilon}(1+2N)^{-\gamma}\nu^{-1}(v)\int_0^t \nu(v)e^{-\nu(v)(t-s)}\textbf{1}_{\{|v|\le N\}}\\
&\indent \times \sup_{0\le s' \le s-\delta}(s-s')^{-\frac{3}{2}}\left(\int_{y \in \T^3}\int_{|u^*|\le 3N}|h(s',y,u^*)|^2 du^*dy\right)^{1/2} ds\\
&\le C_{\epsilon,N,\bar{M}}(1+2N)^{-\gamma}(1+N)^{-\gamma}\delta^{-\frac{3}{2}}\mathcal{E}(F_0)^{\frac{1}{2}}.
\end{split}
\end{align}
For $I_{23}$, the Hölder conjugate $r$ of $p'$ satisfies
\begin{align}\label{holder conjugate of p'}
 1\le r \le \frac{5}{4},\indent \|k_w^{\chi}(v,u)\|_{L^{r}(\{|u|\le 2N\})}\le C_{\epsilon}.
\end{align}
Then $I_{23}$ enjoys
\begin{align*}
I_{23}
&= C\int_0^t \int_{|u|\le 2N}\int_0^{s-\delta} e^{-\nu(v)(t-s)}\textbf{1}_{\{|v|\le N\}}k_w^{\chi}(v,u)\nu(u)e^{-\nu(u)(s-s')}\|h(s')\|_{L^{\infty}}\\
&\indent \times\left(\int_{|u^*|\le 3N}(1+|u^*|)^{-2\beta p'+16}|h(u^*)|^{p'}du^*\right)^{1/p'}ds'duds\\
&\le C\int_0^t e^{-\nu(v)(t-s)} \textbf{1}_{\{|v|\le N\}}\\
&\indent \times \sup_{0\le s' \le s-\delta}\left[\|h(s')\|_{L^{\infty}}\int_{|u|\le 2N}k_w^{\chi}(v,u)\left(\int_{|u^*|\le 3N}(1+|u^*|)^{-2\beta p'+16}|h(u^*)|^{p'}du^*\right)^{1/p'} du\right]ds\\
&\le C\int_0^t  e^{-\nu(v)(t-s)} \textbf{1}_{\{|v|\le N\}}\\
&\indent \times\sup_{0\le s' \le s-\delta}\left[\|h(s')\|_{L^{\infty}}\|k_w^{\chi}(v,u)\|_{L^{r}_u}\left(\int_{|u|\le 2N}\int_{|u^*|\le 3N}(1+|u^*|)^{-2\beta p'+16}|h(u^*)|^{p'}du^*du\right)^{1/p'}\right]ds\\
&\le C_{\epsilon}\int_0^t  e^{-\nu(v)(t-s)} \textbf{1}_{\{|v|\le N\}}\sup_{0\le s' \le s-\delta}\left[\|h(s')\|^{2-\frac{1}{p'}}_{L^{\infty}}\left(\int_{|u|\le 2N}\int_{|u^*|\le 3N}(1+|u^*|)^{-2\beta p'+16}|h(u^*)|du^*du\right)^{1/p'}\right]ds\\
&\le C_{\epsilon}(2N)^{\frac{3}{2p'}}\int_0^t  e^{-\nu(v)(t-s)} \textbf{1}_{\{|v|\le N\}}\\
&\indent \times\sup_{0\le s' \le s-\delta}\|h(s')\|^{2-\frac{1}{p'}}_{L^{\infty}}\left\{\|(1+|u^*|)^{-2\beta p'+16}\|_{L^2_{u^*}}\left(\int_{|u|\le 2N}\int_{|u^*|\le 3N}|h(s',x'-(s-s')u,u^*)|^2du^*du\right)^{1/2}\right\}^{1/p'}ds.\\
\end{align*}
By the change of the variables \eqref{change of variable} and \eqref{mathcalE}, it follows that
\begin{align}\label{I_23 estimate}
\begin{split}
I_{23}
&\le C_{\epsilon}(2N)^{\frac{3}{2p'}}\int_0^t  e^{-\nu(v)(t-s)}\textbf{1}_{\{|v|\le N\}}\\
&\indent \times\sup_{0\le s' \le s-\delta}\|h(s')\|^{2-\frac{1}{p'}}_{L^{\infty}}\left\{\|(1+|u^*|)^{-2\beta p'+16}\|_{L^2_{u^*}}\left(\int_{|u|\le 2N}\int_{|u^*|\le 3N}|h(s',x'-(s-s')u,u^*)|^2du^*du\right)^{1/2}\right\}^{1/p'}ds\\
&\le C_{\epsilon}N^{\frac{3}{2p'}}\nu^{-1}(v)\int_0^t  \nu(v)e^{-\nu(v)(t-s)}\textbf{1}_{\{|v|\le N\}}\\
&\indent \times\sup_{0\le s' \le s-\delta}\left[\|h(s')\|^{2-\frac{1}{p'}}_{L^{\infty}}\left\{(s-s')^{-\frac{3}{2}}\left(\int_{y \in \T^3}\int_{|u^*|\le 3N}|h(s',y,u^*)|^2 du^*dy\right)^{1/2}\right\}^{1/p'}\right]ds\\
&\le C_{\epsilon,N,\bar{M}}N^{\frac{3}{2p'}}(1+N)^{-\gamma}\delta^{-\frac{3}{2p'}}\sup_{0\le s' \le T}\|h(s')\|^{2-\frac{1}{p'}}_{L^{\infty}}\mathcal{E}(F_0)^{\frac{1}{2p'}}.
\end{split}
\end{align}
For an estimate of $I_{24}$, we use Lemma \ref{gamma estimate} to get
\begin{align*}
I_{24}
&= C\int_0^t \int_{|u|\le 2N} \int_0^{s-\delta}e^{-\nu(v)(t-s)}e^{-\nu(u)(s-s')}\textbf{1}_{\{|v|\le N\}}k_w^{\chi}(v,u)\nu(u)\|h(s')\|_{L^{\infty}}\\
&\indent \times\left(\int_{|u^*|\le 3N}w_{q,\vartheta,\beta}(u^*)^{-p'}|h(u^*)|^{p'}du^*\right)^{1/p'}ds'duds\\
&\le C\int_0^t \int_{|u|\le 2N}\int_0^{s-\delta} e^{-\nu(v)(t-s)}\nu(u)e^{-\nu(u)(s-s')}\textbf{1}_{\{|v|\le N\}}k_w^{\chi}(v,u)\|h(s')\|_{L^{\infty}}\\
&\indent \times\left(\int_{|u^*|\le 3N}(1+|u^*|)^{-2\beta p'+16}|h(u^*)|^{p'}du^*\right)^{1/p'}ds'duds.\\
\end{align*}
Therefore we can estimate $I_{24}$ similarly to the estimate of $I_{23}$ : 
\begin{align}\label{I_24 estimate}
I_{24}
&\le C_{N,\bar{M}}N^{\frac{3}{2p'}}(1+N)^{-\gamma}\delta^{-\frac{3}{2p'}}\sup_{0\le s' \le T}\|h(s')\|^{2-\frac{1}{p'}}_{L^{\infty}}\mathcal{E}(F_0)^{\frac{1}{2p'}}.
\end{align}
From Lemma \ref{gamma estimate}, we can get
\begin{align}\label{localgamma}
\begin{split}
|w\Gamma_-(f,g)(t)|
&\le C_{\gamma}\nu(v)\|w_{q,\vartheta,\beta}f(t)\|_{L^{\infty}}\|w_{q,\vartheta,\beta}g(t)\|_{L^{\infty}}\left(\int_{\mathbb{R}^3}\frac{1}{w_{q,\vartheta,\beta}(u)^{p'}}du\right)^{1/p'}\\
&\le C_{\gamma}\nu(v)\|w_{q,\vartheta,\beta}f(t)\|_{L^{\infty}}\|w_{q,\vartheta,\beta}g(t)\|_{L^{\infty}},\\
|w\Gamma_+(f,g)(t)|
&\le C_{\gamma}\nu(v)\|w_{q,\vartheta,\beta}f(t)\|_{L^{\infty}}\|w_{q,\vartheta,\beta}g(t)\|_{L^{\infty}}\left(\int_{\mathbb{R}^3}(1+|u|)^{-2\beta p'+16}du\right)^{1/p'}\\
&\le C_{\gamma}\nu(v)\|w_{q,\vartheta,\beta}f(t)\|_{L^{\infty}}\|w_{q,\vartheta,\beta}g(t)\|_{L^{\infty}},
\end{split}
\end{align} 
because $\frac{1}{w_{q,\vartheta,\beta}(u)^{p'}}$ and $(1+|u|)^{-2\beta p'+16}$ are integrable on $\mathbb{R}^3$.
For an estimate of $I_{25}$, note that $\int_{s-\delta}^s e^{-\nu(u)(s-s')} ds' \le \delta$. By \eqref{gamma estimate} and \eqref{localgamma}, we can obtain
\begin{align}\label{I_25 estimate}
\begin{split}
 I_{25}
&= \int_0^t \int_{|u|\le 2N}\int_{s-\delta}^s e^{-\nu(v)(t-s)}k_w^{\chi}(v,u)e^{-\nu(u)(s-s')}\textbf{1}_{\{|v|\le N, |u^*|\le 3N\}}\\
&\indent \times \left[|K_wh(s')| + |w\Gamma_+(f,f)(s')|+|w\Gamma_-(f,f)(s')|\right] ds'duds\\
&\le C\int_0^t \int_{|u|\le 2N}\int_{s-\delta}^s e^{-\nu(v)(t-s)}k_w(v,u)e^{-\nu(u)(s-s')}\textbf{1}_{\{|v|\le N\}}\left[\|h(s')\|_{L^{\infty}} + 2\|h(s')\|_{L^{\infty}}^2\right] ds'duds\\
&\le C\delta\int_0^t \int_{|u|\le 2N} e^{-\nu(v)(t-s)}k_w(v,u)\textbf{1}_{\{|v|\le N\}}\sup_{s-\delta\le s' \le s}[\|h(s')\|_{L^{\infty}} + \|h(s')\|_{L^{\infty}}^2] duds\\
&\le C\delta\nu^{-1}(v)\int_0^t \nu(v)e^{-\nu(v)(t-s)}\textbf{1}_{\{|v|\le N\}}\sup_{s-\delta \le s' \le s}[\|h(s')\|_{L^{\infty}} + \|h(s')\|_{L^{\infty}}^2] ds\\
&\le C\delta(1+N)^{-\gamma}\sup_{0\le s' \le T}[\|h(s')\|_{L^{\infty}} + \|h(s')\|_{L^{\infty}}^2].
\end{split}
\end{align}
Get together \eqref{I_20 estimate}, \eqref{I_21 estimate}, \eqref{I_221 estimate}, \eqref{I_222 estimate}, \eqref{I_23 estimate}, \eqref{I_24 estimate}, and \eqref{I_25 estimate}. Then we gain
\begin{align}\label{I_2 case4}
I_2\textbf{1}_{\{|v|\le N, |u| \le 2N, |u^*|\le 3N\}}
&\le C_{\bar{M}}\epsilon^{\gamma+3} + Ce^{-\frac{\lambda}{2}(1+t)^{\rho}}\|h_0\|_{L^{\infty}} + C_{\epsilon,N,\bar{M},\delta}(\mathcal{E}(F_0)^{\frac{1}{2}}+\mathcal{E}(F_0)^{\frac{1}{2p'}}) + C_{\bar{M},N}\delta.
\end{align}
Join \eqref{I_2 case1}, \eqref{I_2 case2}, \eqref{I_2 case3}, and \eqref{I_2 case4} altogether. Then it follows that
\begin{align}\label{I_2 estimate}
\begin{split}
I_2 &\le  Ce^{-\frac{\lambda}{2}(1+t)^{\rho}}\|h_0\|_{L^{\infty}}+ C_{\bar{M}}\epsilon^{\gamma+3}+ C_{\bar{M}}\left(\frac{1}{N^{\frac{3+\gamma}{2}}}+N^{\gamma}+e^{-\frac{9}{8}N^2}+\frac{1}{(1+3N)^{3+\gamma}}\right)+\frac{C_{\bar{M},\epsilon}}{N^2}+ C_{\bar{M},N}\delta\\
&\indent + C_{\epsilon,\delta,N,\bar{M}}\left(\mathcal{E}(F_0)^{\frac{1}{2}}+\mathcal{E}(F_0)^{\frac{1}{2p'}}\right).
\end{split}
\end{align}

For $I_3$, denote that $h(u) = h(s,x',u)$ and, by Duhamel's principle and Lemma \ref{gamma estimate}, we split $I_3$ into 
\begin{align*}
I_{3}
&= \int_0^{t} G_v(t,s)w\Gamma_+(f,f)(s) ds\\
&\le C\int_0^{t} G_v(t,s)\nu(v)\|h(s)\|_{L^{\infty}}\left(\int_{\R^3} (1+|u|)^{-2\beta p' + 16}|h(u)|^{p'}\right)^{1/p'} ds\\
&\le C\int_0^{t} G_v(t,s)\nu(v)\|h(s)\|_{L^{\infty}}\\
&\indent \times \left(\int_{\R^3} (1+|u|)^{-2\beta p' + 16}\left|G_u(s,0)h_0+\int_0^{s}G_u(s,s')\left[K_wh(s')+w\Gamma_+(f,f)(s')+w\Gamma_-(f,f)(s')\right]ds'\right|^{p'}du\right)^{1/p'} ds\\
&\le I_{31}+I_{32}+I_{33}+I_{34}+I_{35},
\end{align*}
where
\begin{align*}
I_{31}
&:= C\int_0^{t} G_v(t,s)\nu(v)\|h(s)\|_{L^{\infty}}\left(\int_{\R^3} (1+|u|)^{-2\beta p' + 16}|G_u(s,0)h_0|^{p'}du\right)^{1/p'}ds,\\
I_{32}
&:= C\int_0^{t} G_v(t,s)\nu(v)\|h(s)\|_{L^{\infty}}\left\{\int_{\R^3} (1+|u|)^{-2\beta p' + 16}\left(\int_0^{s-\delta}G_u(s,s')|K_wh(s')|ds'\right)^{p'}du\right\}^{1/p'}ds,\\
I_{33}
&:= C\int_0^{t} G_v(t,s)\nu(v)\|h(s)\|_{L^{\infty}}\left\{\int_{\R^3} (1+|u|)^{-2\beta p' + 16}\left(\int_0^{s-\delta}G_u(s,s')|w\Gamma_+(f,f)(s')|ds'\right)^{p'}du\right\}^{1/p'}ds,\\
I_{34}
&:= C\int_0^{t} G_v(t,s)\nu(v)\|h(s)\|_{L^{\infty}}\left\{\int_{\R^3} (1+|u|)^{-2\beta p' + 16}\left(\int_0^{s-\delta}G_u(s,s')|w\Gamma_-(f,f)(s')|ds'\right)^{p'}du\right\}^{1/p'}ds,\\
I_{35}
&:= C\int_0^{t} G_v(t,s)\nu(v)\|h(s)\|_{L^{\infty}}\\
&\indent \times\left(\int_{\R^3} (1+|u|)^{-2\beta p' + 16}\left|\int_{s-\delta}^{s}G_u(s,s')[K_wh(s')+w\Gamma_+(f,f)(s')+w\Gamma_-(f,f)(s')]ds'\right|^{p'}du\right)^{1/p'}ds.
\end{align*}
For $I_{31}$, by \eqref{nutilde estimate}, $I_{31}$ satisfies
\begin{align}\label{I_31 estimate}
\begin{split}
I_{31}
&= C\int_0^{t} G_v(t,s)\nu(v)\|h(s)\|_{L^{\infty}}\left(\int_{\R^3} (1+|u|)^{-2\beta p' + 16}|G_u(s,0)h_0|^{p'}du\right)^{1/p'}ds\\
&\le C\int_0^{t} G_v(t,s)G_u(s,0)\|h(s)\|_{L^{\infty}}\|h_0\|_{L^{\infty}}\left(\int_{\R^3} (1+|u|)^{-2\beta p' + 16}du\right)^{1/p'}ds\\
&\le C\int_0^{t} e^{-C\int_s^t(1+\tau)^{\rho-1}d\tau-C\int_0^s(1+\tau)^{\rho-1}d\tau}\|h(s)\|_{L^{\infty}}\|h_0\|_{L^{\infty}}ds\\
&\le Cte^{-\lambda(1+t)^{\rho}}\|h_0\|_{L^{\infty}}\int_0^t\|h(s)\|_{L^{\infty}}ds\\
&\le Ce^{-\frac{\lambda}{2}(1+t)^{\rho}}\|h_0\|_{L^{\infty}}\int_0^t\|h(s)\|_{L^{\infty}}ds,\\
\end{split}
\end{align}
in that $te^{-\frac{\lambda}{2}(1+t)^{\rho}}$ is bounded for $t\ge 0$.
For $I_{32}$, we divde $I_{32}$ into 
\begin{align*}
I_{32}
&\le I_{321} + I_{322} + I_{323} + I_{324},
\end{align*}
where
\begin{align*}
I_{321}
&:= C\int_0^{t} G_v(t,s)\nu(v)\|h(s)\|_{L^{\infty}}\\
&\indent \times\left\{\int_{|u|\ge N} (1+|u|)^{-2\beta p' + 16}\left(\int_0^{s-\delta}G_u(s,s')\left|\int_{\R^3}k_w(u,u^*)h(s')du^*\right|\right)^{p'}du\right\}^{1/p'}ds,\\
I_{322}
&:= C\int_0^{t} G_v(t,s)\nu(v)\|h(s)\|_{L^{\infty}}\\
&\indent \times\left\{\int_{|u|\le N} (1+|u|)^{-2\beta p' + 16}\left(\int_0^{s-\delta}G_u(s,s')\left|\int_{|u^*|\ge 2N}k_w(u,u^*)h(s')du^*\right|ds'\right)^{p'}du\right\}^{1/p'}ds,\\
I_{323}
&:=C\int_0^{t} G_v(t,s)\nu(v)\|h(s)\|_{L^{\infty}}\\
&\indent \times\left\{\int_{|u|\le N} (1+|u|)^{-2\beta p' + 16}\left(\int_0^{s-\delta}G_u(s,s')\left|\int_{|u^*|\le 2N}k_w^{1-\chi}(u,u^*)h(s')du^*\right|ds'\right)^{p'}du\right\}^{1/p'}ds,\\
I_{324}
&:=C\int_0^{t} G_v(t,s)\nu(v)\|h(s)\|_{L^{\infty}}\\
&\indent \times\left\{\int_{|u|\le N} (1+|u|)^{-2\beta p' + 16}\left(\int_0^{s-\delta}G_u(s,s')\left|\int_{|u^*|\le 2N}k_w^{\chi}(u,u^*)h(s')du^*\right|ds'\right)^{p'}du\right\}^{1/p'}ds.\\
\end{align*}
For $I_{321}$, notice that $(1+|u|)^{-2\beta p' + 16}$ is integrable on $\{|u| \ge N\}$. By Lemma \ref{Lemma_k.1}, we can get  similar to the estimate of $I_{2}$ for $|v|\ge N$
\begin{align}\label{I_321 estimate}
\begin{split}
I_{321}
&= C\int_0^{t} G_v(t,s)\nu(v)\|h(s)\|_{L^{\infty}}\\
&\indent \times\left\{\int_{|u|\ge N} (1+|u|)^{-2\beta p' + 16}\left(\int_0^{s-\delta}G_u(s,s')\left|\int_{\R^3}(k_w^{1-\chi}+k_w^{\chi})(u,u^*)h(s')\right|ds'\right)^{p'}du\right\}^{1/p'}ds\\
&\le C\int_0^{t} G_v(t,s)\nu(v)\|h(s)\|_{L^{\infty}}\\
&\indent \times\left\{\int_{|u|\ge N} (1+|u|)^{-2\beta p' + 16}\left(\int_0^{s-\delta}e^{-\nu(u)(s-s')}\nu(u)\|h(s')\|_{L^{\infty}}\nu^{-1}(u)\left(\mu(u)^{\frac{1-q}{8}}\epsilon^{\gamma+3}+C_{\epsilon}\langle u \rangle^{\gamma-2}\right)ds'\right)^{p'}du\right\}^{1/p'}ds\\
&\le C\int_0^{t} G_v(t,s)\nu(v)\|h(s)\|_{L^{\infty}}\left\{\int_{|u|\ge N} (1+|u|)^{-2\beta p' + 16}\left(\epsilon^{p'(\gamma+3)}+(\frac{C_{\epsilon}}{N^2})^{p'}\right)\left(\sup_{0\le s' \le s-\delta}\|h(s')\|_{L^{\infty}}\right)^{p'}du\right\}^{1/p'}ds\\
&\le \left(C\epsilon^{\gamma+3}+\frac{C_{\epsilon}}{N^2}\right)\int_0^{t} e^{-\nu(v)(t-s)}\nu(v)\|h(s)\|_{L^{\infty}}\sup_{0\le s' \le s-\delta}\|h(s')\|_{L^{\infty}}ds\\
&\le \left(C\epsilon^{\gamma+3}+\frac{C_{\epsilon}}{N^2}\right)\sup_{0\le s \le T}\|h(s)\|_{L^{\infty}}^2.
\end{split}
\end{align}
For $I_{322}$, from Lemma \ref{Lemma_k.2} and \eqref{case2 k_w,2}, we can obtain similar to the estimate of $I_2$ for $|v|\le N$ and $|u|\ge 2N$
\begin{align}\label{I_322 estimate}
\begin{split}
I_{322}
&= C\int_0^{t} G_v(t,s)\nu(v)\|h(s)\|_{L^{\infty}}\\
&\indent \times\left\{\int_{|u|\le N} (1+|u|)^{-2\beta p' + 16}\left(\int_0^{s-\delta}G_u(s,s')\left|\int_{|u^*|\ge 2N}k_w(u,u^*)h(s')du^*\right|ds'\right)^{p'}du\right\}^{1/p'}ds\\
&\le C\int_0^{t} G_v(t,s)\nu(v)\|h(s)\|_{L^{\infty}}\\
&\indent \times\left\{\int_{|u|\le N} (1+|u|)^{-2\beta p' + 16}\left(\int_0^{s-\delta}e^{-\nu(u)(s-s')}\left|\int_{|u^*|\ge 2N}(k_{w,1}+k_{w,2})(u^*,u)h(s')du^*\right|ds'\right)^{p'}du\right\}^{1/p'}ds\\
&\le C\int_0^{t} e^{-\nu(v)(t-s)}\nu(v)\|h(s)\|_{L^{\infty}}\left\{\int_{|u|\le N} (1+|u|)^{-2\beta p' + 16}\left(\sup_{0 \le s \le s-\delta}(N^{\gamma} + \frac{1}{N^{\frac{3+\gamma}{2}}})\|h(s')\|\right)^{p'}du\right\}^{1/p'}ds\\
&\le C\left(N^{\gamma} + \frac{1}{N^{\frac{3+\gamma}{2}}}\right)\sup_{0 \le s \le T}\|h(s)\|_{L^{\infty}}^2,
\end{split}
\end{align}
because $-2\beta p' + 16 < -3$. For $I_{323}$, we use Lemma \ref{Lemma_k.1} to get
\begin{align}\label{I_323 estimate}
\begin{split}
I_{323}
&= C\int_0^{t} G_v(t,s)\nu(v)\|h(s)\|_{L^{\infty}}\\
&\indent \times\left\{\int_{|u|\le N} (1+|u|)^{-2\beta p' + 16}\left(\int_0^{s-\delta}G_u(s,s')\left|\int_{|u^*|\le 2N}k_w^{1-\chi}(u,u^*)h(s')du^*\right|\right)^{p'}du\right\}^{1/p'}ds\\
&\le C\int_0^{t} e^{-\nu(v)(t-s)}\nu(v)\|h(s)\|_{L^{\infty}}\\
&\indent \times\left\{\int_{|u|\le N} (1+|u|)^{-2\beta p' + 16}\left(\int_0^{s-\delta}e^{-\nu(u)(s-s')}\nu(u)\nu(u)^{-1}\mu^{\frac{1-q}{8}}(u)\epsilon^{\gamma+3}\|h(s')\|_{L^{\infty}}\right)^{p'}du\right\}^{1/p'}ds\\
&\le C\int_0^{t} e^{-\nu(v)(t-s)}\nu(v)\|h(s)\|_{L^{\infty}}\left\{\int_{|u|\le N} (1+|u|)^{-2\beta p' + 16}\epsilon^{p'(\gamma+3)}\left(\sup_{0\le s' \le s-\delta}\|h(s')\|_{L^{\infty}}\right)^{p'}du\right\}^{1/p'}ds\\
&\le C\int_0^{t} e^{-\nu(v)(t-s)}\nu(v)\|h(s)\|_{L^{\infty}}\epsilon^{\gamma+3}\sup_{0\le s' \le s-\delta}\|h(s')\|_{L^{\infty}}ds\\
&\le C\epsilon^{\gamma+3}\sup_{0\le s \le T}\|h(s)\|_{L^{\infty}}^2.
\end{split}
\end{align}
For $I_{324}$, the Hölder conjugate $r$ of $p'$ satisfies \eqref{holder conjugate of p'}. Note that $\|(1+|u|)^{-2\beta p' + 16}\|_{L^2_u}<\infty$ and $I_{324}$ can be bounded by
\begin{align*}
I_{324}
&= C\int_0^{t} G_v(t,s)\nu(v)\|h(s)\|_{L^{\infty}}\\
&\indent \times\left\{\int_{|u|\le N} (1+|u|)^{-2\beta p' + 16}\left(\int_0^{s-\delta}G_u(s,s')\left|\int_{|u^*|\le 2N}k_w^{\chi}(u,u^*)h(s')du^*\right|\right)^{p'}du\right\}^{1/p'}ds\\
&\le C(1+N)^{-\gamma}\int_0^{t} G_v(t,s)\nu(v)\|h(s)\|_{L^{\infty}}\\
&\indent \times\sup_{0\le s' \le s-\delta}\left\{\int_{|u|\le N} (1+|u|)^{-2\beta p' + 16}\left(\int_{|u^*|\le 2N}k_w^{\chi}(u,u^*)|h(s')|du^*\right)^{p'}du\right\}^{1/p'}ds\\
&\le C(1+N)^{-\gamma}\int_0^{t} G_v(t,s)\nu(v)\|h(s)\|_{L^{\infty}}\\
&\indent \times\sup_{0\le s' \le s-\delta}\left(\int_{|u|\le N} (1+|u|)^{-2\beta p' + 16}\|k_w^{\chi}(u,u^*)\|_{L^{r}(\R^3_{u^*})}^{p'}\int_{|u^*|\le 2N}|h(s')|^{p'}du^*\right)^{1/p'}ds\\
&\le C_{\epsilon}(1+N)^{-\gamma}\int_0^{t} G_v(t,s)\nu(v)\|h(s)\|_{L^{\infty}}\\
&\indent \times\sup_{0\le s' \le s-\delta}\left(\|h(s')\|^{p'-1}_{L^{\infty}}\int_{|u|\le N}\int_{|u^*|\le 2N} (1+|u|)^{-2\beta p' + 16}|h(s')|du^*du\right)^{1/p'}ds\\
&\le C_{\epsilon}(1+N)^{-\gamma}\int_0^{t} G_v(t,s)\nu(v)\|h(s)\|_{L^{\infty}}\\
&\indent \times\sup_{0\le s' \le s-\delta}\left\{\|h(s')\|^{p'-1}_{L^{\infty}}\|(1+|u|)^{-2\beta p' + 16}\|_{L^2_{u,u^*}}\left(\int_{|u|\le N}\int_{|u^*|\le 2N}|h(s')|^2du^*du\right)^{1/2}\right\}^{1/p'}ds\\
&= C_{\epsilon}(1+N)^{-\gamma}\int_0^{t} G_v(t,s)\nu(v)\|h(s)\|_{L^{\infty}}\\
&\indent \times\sup_{0\le s' \le s-\delta}\left\{(2N)^{\frac{3}{2}}\|h(s')\|^{p'-1}_{L^{\infty}}\|(1+|u|)^{-2\beta p' + 16}\|_{L^2_{u}}\left(\int_{|u|\le N}\int_{|u^*|\le 2N}|h(s',x'-(s-s')u,u^*)|^2du^*du\right)^{1/2}\right\}^{1/p'}ds.\\
\end{align*}
By the change of the variables \eqref{change of variable} and \eqref{mathcalE}, $I_{324}$ satisfies
\begin{align}\label{I_324 estimate}
\begin{split}
I_{324}
&\le C_{\epsilon}(1+N)^{-\gamma}\int_0^{t} G_v(t,s)\nu(v)\|h(s)\|_{L^{\infty}}\\
&\indent \times\sup_{0\le s' \le s-\delta}\left\{(2N)^{\frac{3}{2}}\|h(s')\|^{p'-1}_{L^{\infty}}\|(1+|u|)^{-2\beta p' + 16}\|_{L^2_{u}}\left(\int_{|u|\le N}\int_{|u^*|\le 2N}|h(s',x'-(s-s')u,u^*)|^2du^*du\right)^{1/2}\right\}^{1/p'}ds\\
&\le C_{\epsilon}(1+N)^{-\gamma}\int_0^{t} G_v(t,s)\nu(v)\|h(s)\|_{L^{\infty}}\\
&\indent \times\sup_{0\le s' \le s-\delta}\left\{(2N)^{\frac{3}{2}}\|h(s')\|^{p'-1}_{L^{\infty}}\|(1+|u|)^{-2\beta p' + 16}\|_{L^2_{u}}(s-s')^{-\frac{3}{2}}\left(\int_{y\in\T^3}\int_{|u^*|\le 2N}|h(s',y,u^*)|^2du^*dy\right)^{1/2}\right\}^{1/p'}ds\\
&\le C_{\epsilon,N,\bar{M}}(1+N)^{-\gamma}N^{\frac{3}{2p'}}\delta^{-\frac{3}{2p'}}\sup_{0\le s \le T}\|h(s)\|_{L^{\infty}}^{2-\frac{1}{p'}}\mathcal{E}(F_0)^{\frac{1}{2p'}}.
\end{split}
\end{align}
Combining \eqref{I_321 estimate}, \eqref{I_322 estimate}, \eqref{I_323 estimate}, and \eqref{I_324 estimate}, we can get
\begin{align}\label{I_32 estimate}
I_{32} \le C_{\bar{M}}\epsilon^{\gamma+3} + C_{\bar{M}}\left(N^{\gamma} + \frac{1}{N^{\frac{\gamma+3}{2}}}\right) + \frac{C_{\bar{M},\epsilon}}{N^2} + C_{\bar{M},\epsilon, N, \delta}\mathcal{E}(F_0)^{\frac{1}{2p'}}.
\end{align}
For $I_{33}$, it follows from Lemma \ref{gamma estimate} that
\begin{align*}
I_{33}
&= C\int_0^{t} G_v(t,s)\nu(v)\|h(s)\|_{L^{\infty}}\left\{\int_{\R^3} (1+|u|)^{-2\beta p' + 16}\left(\int_0^{s-\delta}G_u(s,s')|w\Gamma_+(f,f)(s')|ds'\right)^{p'}du\right\}^{1/p'}ds\\
&\le C\int_0^{t} G_v(t,s)\nu(v)\|h(s)\|_{L^{\infty}}\\
&\indent \times\left\{\int_{\R^3} (1+|u|)^{-2\beta p' + 16}\left(\int_0^{s-\delta}e^{-\nu(u)(s-s')}\nu(u)\|h(s')\|_{L^{\infty}}\left[\int_{\R^3}(1+|u^*|)^{-2\beta p' + 16}|h(u^*)|^{p'}du^*\right]^{1/p'}ds\right)^{p'}du\right\}^{1/p'}ds\\
&\le C\int_0^{t} G_v(t,s)\nu(v)\|h(s)\|_{L^{\infty}}\\
&\indent \times\left(\sup_{0\le s' \le s-\delta}\|h(s')\|_{L^{\infty}}^{p'}\int_{\R^3} (1+|u|)^{-2\beta p' + 16}\int_{\R^3}(1+|u^*|)^{-2\beta p' + 16}|h(u^*)|^{p'}du^*dsdu\right)^{1/p'}ds\\
&\le I_{331} + I_{332},
\end{align*}
where 
\begin{align*}
I_{331}
&:= C\int_0^{t} G_v(t,s)\nu(v)\|h(s)\|_{L^{\infty}}\\
&\indent \times\left(\sup_{0\le s' \le s-\delta}\|h(s')\|_{L^{\infty}}^{p'}\int_{\R^3} (1+|u|)^{-2\beta p' + 16}\int_{|u^*|\ge 3N}(1+|u^*|)^{-2\beta p' + 16}|h(u^*)|^{p'}du^*dsdu\right)^{1/p'}ds,\\
I_{332}
&:= C\int_0^{t} G_v(t,s)\nu(v)\|h(s)\|_{L^{\infty}}\\
&\indent \times\left(\sup_{0\le s' \le s-\delta}\|h(s')\|_{L^{\infty}}^{p'}\int_{\R^3} (1+|u|)^{-2\beta p' + 16}\int_{|u^*|\le 3N}(1+|u^*|)^{-2\beta p' + 16}|h(u^*)|^{p'}du^*dsdu\right)^{1/p'}ds.\\
\end{align*}
For $I_{331}$, we can obtain
\begin{align}\label{I_331 estimate}
\begin{split}
I_{331}
&= C\int_0^{t} G_v(t,s)\nu(v)\|h(s)\|_{L^{\infty}}\\
&\indent \times\left(\sup_{0\le s' \le s-\delta}\|h(s')\|_{L^{\infty}}^{p'}\int_{\R^3} (1+|u|)^{-2\beta p' + 16}\int_{|u^*|\ge 3N}(1+|u^*|)^{-2\beta p' + 16}|h(u^*)|^{p'}du^*dsdu\right)^{1/p'}ds\\
&\le C\int_0^{t} G_v(t,s)\nu(v)\|h(s)\|_{L^{\infty}}\\
&\indent \times\left(\sup_{0\le s' \le s-\delta}\|h(s')\|_{L^{\infty}}^{2p'}\int_{\R^3} (1+|u|)^{-2\beta p' + 16}\int_{|u^*|\ge 3N}(1+|u^*|)^{-2\beta p' + 16}du^*dsdu\right)^{1/p'}ds\\
&\le \frac{C}{1+3N}\int_0^{t} G_v(t,s)\nu(v)\|h(s)\|_{L^{\infty}}\\
&\indent \times\left(\sup_{0\le s' \le s-\delta}\|h(s')\|_{L^{\infty}}^{2p'}\int_{\R^3} (1+|u|)^{-2\beta p' + 16}\int_{|u^*|\ge 3N}(1+|u^*|)^{-p'(2\beta-1) + 16}du^*dsdu\right)^{1/p'}ds\\
&\le \frac{C}{1+3N}\int_0^{t} e^{-\nu(v)(t-s)}\nu(v)\|h(s)\|_{L^{\infty}}\sup_{0\le s' \le s-\delta}\|h(s')\|_{L^{\infty}}^{2} ds\\
&\le \frac{C}{1+3N}\sup_{0\le s' \le T}\|h(s')\|_{L^{\infty}}^{3}, 
\end{split}
\end{align}
since $(-p')(2\beta -1)+16 < -3$.
By the change of the variables \eqref{change of variable} and \eqref{mathcalE}, $I_{332}$ enjoys
\begin{align}\label{I_332 estimate}
\begin{split}
I_{332}
&= C\int_0^{t} G_v(t,s)\nu(v)\|h(s)\|_{L^{\infty}}\\
&\indent \times\left(\sup_{0\le s' \le s-\delta}\|h(s')\|_{L^{\infty}}^{p'}\int_{\R^3} (1+|u|)^{-2\beta p' + 16}\int_{|u^*|\le 3N}(1+|u^*|)^{-2\beta p' + 16}|h(u^*)|^{p'}du^*dsdu\right)^{1/p'}ds\\
&\le C\int_0^{t} G_v(t,s)\nu(v)\|h(s)\|_{L^{\infty}}\sup_{0\le s' \le s-\delta}\|h(s')\|_{L^{\infty}}^{2-\frac{1}{p'}}\\
&\indent \times\left\{\sup_{0\le s' \le s-\delta}\|\{(1+|u|)(1+|u^*|)\}^{-2\beta p' + 16}\|_{L^2}\left(\int_{\R^3}\int_{|u^*|\le 3N}|h(s',x'-(s-s')u,u^*)|^2du^*du\right)^{1/2}\right\}^{1/p'}ds\\
&\le C\int_0^{t} G_v(t,s)\nu(v)\|h(s)\|_{L^{\infty}}\sup_{0\le s' \le s-\delta}\|h(s')\|_{L^{\infty}}^{2-\frac{1}{p'}}\\
&\indent \times\left\{\sup_{0\le s' \le s-\delta}\|\{(1+|u|)(1+|u^*|)\}^{-2\beta p' + 16}\|_{L^2}\left(\int_{\R^3}\int_{|u^*|\le 3N}|h(s',x'-(s-s')u,u^*)|^2du^*du\right)^{1/2}\right\}^{1/p'}ds\\
&\le C\int_0^{t} G_v(t,s)\nu(v)\|h(s)\|_{L^{\infty}}\sup_{0\le s' \le s-\delta}\|h(s')\|_{L^{\infty}}^{2-\frac{1}{p'}}\\
&\indent \times\left\{\sup_{0\le s' \le s-\delta}\|\{(1+|u|)(1+|u^*|)\}^{-2\beta p' + 16}\|_{L^2}(s-s')^{-\frac{3}{2}}\left(\int_{y\in\T^3}\int_{|u^*|\le 3N}|h(s',y,u^*)|^2du^*dy\right)^{1/2}\right\}^{1/p'}ds\\
&\le C_{N,\bar{M}}\int_0^{t} G_v(t,s)\nu(v)\|h(s)\|_{L^{\infty}}\sup_{0\le s' \le s-\delta}\|h(s')\|_{L^{\infty}}^{2-\frac{1}{p'}}\delta^{-\frac{3}{2p'}}\mathcal{E}(F_0)^{\frac{1}{2p'}}ds\\
&\le C_{N,\bar{M}}\delta^{-\frac{3}{2p'}}\sup_{0\le s \le T}\|h(s)\|_{L^{\infty}}^{3-\frac{1}{p'}}\mathcal{E}(F_0)^{\frac{1}{2p'}}.
\end{split}
\end{align}
Collecting \eqref{I_331 estimate} and \eqref{I_332 estimate}, we can get
\begin{align}\label{I_33 estimate}
I_{33} \le \frac{C_{\bar{M}}}{1+3N} + C_{\bar{M}, \delta, N}\mathcal{E}(F_0)^{\frac{1}{2p'}}.
\end{align}
For $I_{34}$, it follows that
\begin{align}
\begin{split}
I_{34}
&=C\int_0^{t} G_v(t,s)\nu(v)\|h(s)\|_{L^{\infty}}\left\{\int_{\R^3} (1+|u|)^{-2\beta p' + 16}\left(\int_0^{s-\delta}G_u(s,s')|w\Gamma_-(f,f)(s')|ds'\right)^{p'}du\right\}^{1/p'}ds\\
&\le C\int_0^{t} G_v(t,s)\nu(v)\|h(s)\|_{L^{\infty}}\\
&\indent \times\left\{\int_{\R^3} (1+|u|)^{-2\beta p' + 16}\left(\int_0^{s-\delta}G_u(s,s')\nu(u)\|h(s')\|_{L^{\infty}}\left[\int_{\R^3}\frac{1}{w_{q,\vartheta,\beta}(u^*)^{p'}}|h(u^*)|^{p'}du^*\right]^{1/p'}ds\right)^{p'}du\right\}^{1/p'}ds\\
&\le C\int_0^{t} G_v(t,s)\nu(v)\|h(s)\|_{L^{\infty}}\\
&\indent \times\left\{\int_{\R^3} (1+|u|)^{-2\beta p' + 16}\left(\int_0^{s-\delta}G_u(s,s')\nu(u)\|h(s')\|_{L^{\infty}}\left[\int_{\R^3}(1+|u^*|)^{-2\beta p' + 16}|h(u^*)|^{p'}du^*\right]^{1/p'}ds\right)^{p'}du\right\}^{1/p'}ds.\\
\end{split}
\end{align}
Therefore we can gain similar to the estimate of $I_{33}$ 
\begin{align}\label{I_34 estimate}
I_{34} \le  \frac{C_{\bar{M}}}{1+3N} + C_{N,\bar{M},\delta}\mathcal{E}(F_0)^{\frac{1}{2p'}}.
\end{align} 
For $I_{35}$, notice that $\int_{s-\delta}^s G_u(s,s') ds' \le \int_{s-\delta}^s ds' = \delta$ and it follows from \eqref{localgamma} that
\begin{align}\label{I_35 estimate}
\begin{split}
I_{35}
&= C\int_0^{t} G_v(t,s)\nu(v)\|h(s)\|_{L^{\infty}}\\
&\indent \times\left(\int_{\R^3} (1+|u|)^{-2\beta p' + 16}\left|\int_{s-\delta}^{s}G_u(s,s')[K_wh(s')+w\Gamma_+(f,f)(s')+w\Gamma_-(f,f)(s')]ds'\right|^{p'}du\right)^{1/p'}ds\\
&\le C\int_0^{t} G_v(t,s)\nu(v)\|h(s)\|_{L^{\infty}}\left\{\sup_{s-\delta\le s' \le s}\delta\left(\|h(s')\|_{L^{\infty}}^{p'}+\|h(s')\|_{L^{\infty}}^{2p'}\right)\right\}^{1/p'}ds\\
&\le C\delta^{\frac{1}{p'}}[\sup_{0 \le s \le T}\|h(s)\|_{L^{\infty}}^2+\sup_{0 \le s \le T}\|h(s)\|_{L^{\infty}}^3].
\end{split}
\end{align}
Get together \eqref{I_31 estimate}, \eqref{I_32 estimate}, \eqref{I_33 estimate}, \eqref{I_34 estimate}, and \eqref{I_35 estimate}. Then we obtain
\begin{align}\label{I_3 estimate}
\begin{split}
I_3 &\le Ce^{-\frac{\lambda}{2}(1+t)^{\rho}}\|h_0\|_{L^{\infty}}\int_0^t\|h(s)\|_{L^{\infty}}ds + C_{\bar{M}}\epsilon^{\gamma+3}+ C_{\bar{M}}\delta^{\frac{1}{p'}} + C_{\epsilon,\delta,N,\bar{M}}\mathcal{E}(F_0)^{\frac{1}{2p'}}\\
&\indent + C_{\bar{M}}\left(\frac{1}{N^{\frac{3+\gamma}{2}}}+N^{\gamma}+\frac{1}{1+N}\right)+\frac{C_{\epsilon,\bar{M}}}{N^2}.
\end{split}
\end{align}
For $I_{4}$, we bound $I_{4}$ from Lemma \ref{gamma estimate} by
\begin{align*}
I_{4}
&= \int_0^{t} G_v(t,s)w\Gamma_-(f,f)(s) ds\\
&\le C\int_0^{t} G_v(t,s)\nu(v)\|h(s)\|_{L^{\infty}}\left(\int_{\R^3} \frac{1}{w_{q,\vartheta,\beta}(u)^{p'}}|h(u)|^{p'}\right)^{1/p'} ds\\
&\le C\int_0^{t} G_v(t,s)\nu(v)\|h(s)\|_{L^{\infty}}\left(\int_{\R^3} (1+|u|)^{-2\beta p' + 16}|h(u)|^{p'}\right)^{1/p'} ds.
\end{align*}
Therefore we can get similar to the estimate of $I_{3}$
\begin{align}\label{I_4 estimate}
\begin{split}
I_4 &\le Ce^{-\frac{\lambda}{2}(1+t)^{\rho}}\|h_0\|_{L^{\infty}}\int_0^t\|h(s)\|_{L^{\infty}}ds + C_{\bar{M}}\epsilon^{\gamma+3}+ C_{\bar{M}}\delta^{\frac{1}{p'}} + C_{\epsilon,\delta,N,\bar{M}}\mathcal{E}(F_0)^{\frac{1}{2p'}}\\
&\indent + C_{\bar{M}}\left(\frac{1}{N^{\frac{3+\gamma}{2}}}+N^{\gamma}+\frac{1}{1+N}\right)+\frac{C_{\bar{M},\epsilon}}{N^2}.
\end{split}
\end{align}
Notice that for $0<\delta<1$ and sufficiently large $N\gg 1$,
\begin{align*}
\delta \le \delta^{\frac{1}{p'}}, \indent e^{-\frac{9}{8}N^2} \le \frac{1}{N^2} \le \frac{1}{N+1},\indent \frac{1}{(1+N)^{3+\gamma}} \le \frac{1}{N^{\frac{3+\gamma}{2}}}.
\end{align*}
Combining \eqref{I_1 estimate}, \eqref{I_2 estimate}, \eqref{I_3 estimate}, and \eqref{I_4 estimate}, we can get
\begin{align*}
\|h(t)\|_{L^{\infty}} 
&\le Ce^{-\frac{\lambda}{2}(1+t)^{\rho}}\|h_0\|_{L^{\infty}}\left(\int_0^t\|h(s)\|_{L^{\infty}}ds + 1\right) + C_{\bar{M}}\epsilon^{\gamma+3} + C_{\bar{M},N}(\delta+\delta^{\frac{1}{p'}}) + C_{\bar{M},\epsilon,\delta,N}(\mathcal{E}(F_0)^{\frac{1}{2}} +\mathcal{E}(F_0)^{\frac{1}{2p'}} )\\
&\indent + C_{\bar{M}}\left(\frac{1}{N^{\frac{3+\gamma}{2}}}+N^{\gamma}+e^{-\frac{9}{8}N^2}+\frac{1}{(1+N)^{3+\gamma}}+\frac{1}{N+1}\right)+\frac{C_{\bar{M},\epsilon}}{N^2}\\
&\le Ce^{-\frac{\lambda}{2}(1+t)^{\rho}}\|h_0\|_{L^{\infty}}\left(\int_0^t\|h(s)\|_{L^{\infty}}ds + 1\right) + C_{\bar{M}}\epsilon^{\gamma+3} + C_{\bar{M},N}\delta^{\frac{1}{p'}} + C_{\bar{M},\epsilon,\delta,N}(\mathcal{E}(F_0)^{\frac{1}{2}} +\mathcal{E}(F_0)^{\frac{1}{2p'}} )\\
&\indent + C_{\bar{M}}\left(\frac{1}{N^{\frac{3+\gamma}{2}}}+N^{\gamma}+\frac{1}{N+1}\right)+\frac{C_{\bar{M},\epsilon}}{N^2}.\\
\end{align*}
\end{proof}

\section{Proof of the main theorem}
We need to recall the theorem in \cite{SY17} because global existence for the solution in the large amplitude Boltzmann equation depends on the theorem in the small amplitude Boltzmann equation.  
\begin{proposition}\label{small amplitude} \cite{SY17}
Let $0<\vartheta<-\frac{2}{\gamma}$, $-3<\gamma<0$, and $\rho-1 = \frac{(1+\vartheta)\gamma}{2-\gamma}$. Assume that the phase space is $\T_{x}^3\times \R_{v}^3$ and $f_0$ satisfies \eqref{conservation}. If $F_0(t,x,v) = \mu(v) + \sqrt{\mu}(v)f_0(t,x,v)$ and $\|w_{q,\vartheta,\beta}f_0\|_{L^{\infty}}\le \epsilon_1$ sufficiently small, there exists a unique solution $F(t,x,v) = \mu(v) + \sqrt{\mu}(v)f(t,x,v) \ge 0$ to the Boltzmann equation \eqref{def.be} on $\T_{x}^3\times \R_{v}^3$ and f satisfies
\begin{align*}
\|w_{q,\vartheta,\beta}f(t)\|_{L^{\infty}} \le Ce^{-\lambda_1t^{\rho}}\|w_{q,\vartheta,\beta}f_0\|,
\end{align*}
for some $\lambda_1 >0$. 
\end{proposition}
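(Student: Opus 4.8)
The argument is the $L^2$--$L^\infty$ bootstrap of \cite{Guo} adapted to soft potentials via the time-dependent weight \eqref{t-weight}, in the spirit of \cite{RY08,SY17}. Setting $h=w_{q,\vartheta,\beta}f$, the perturbation solves \eqref{def.be.h}, and the crucial gain is that the effective collision frequency satisfies $\tilde{\nu}(v,t)\ge C_{q,\vartheta}(1+t)^{\rho-1}$ \emph{uniformly in} $v$ by \eqref{nutilde estimate}, so $G_v(t,s)\le e^{-\lambda_1\{(1+t)^{\rho}-(1+s)^{\rho}\}}$. The plan is: (i) prove the weighted $L^2$ decay $\|f(t)\|_{L^2_{x,v}}\le C e^{-\lambda_1 t^{\rho}}\big(\|f_0\|_{L^2_{x,v}}+\|w_{q,\vartheta,\beta}f_0\|_{L^\infty}\big)$; (ii) feed it into a double-Duhamel $L^\infty$ estimate to get $\sup_{[0,T]}e^{\lambda_1 t^{\rho}}\|h(t)\|_{L^\infty}\le C\|h_0\|_{L^\infty}$ with $C$ independent of $T$; (iii) close everything for $\|w_{q,\vartheta,\beta}f_0\|_{L^\infty}\le\epsilon_1$ small and promote the a priori bound to a global solution via a positivity-preserving iteration. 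Steps (i) and (ii) are coupled and must be run simultaneously.

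For step (i) I use the classical macro--micro energy method. The $L^2_{x,v}$ inner product of \eqref{f_eqtn} with $f$ gives $\tfrac12\tfrac{d}{dt}\|f\|_{L^2}^2+\langle Lf,f\rangle=\langle\Gamma(f,f),f\rangle$, and coercivity of $L$ yields $\langle Lf,f\rangle\ge\delta_0\|\langle v\rangle^{\gamma/2}\ip f\|_{L^2}^2$. Using the conservation laws \eqref{conservation} and a standard interaction functional $\mathcal{I}(t)$ built from the low-order velocity moments of $f$ (satisfying $\|\P f\|_{L^2}^2\lesssim\tfrac{d}{dt}\mathcal{I}(t)+\|\langle v\rangle^{\gamma/2}\ip f\|_{L^2}^2$), one obtains $\tfrac{d}{dt}\mathcal{D}(t)+\lambda\|\langle v\rangle^{\gamma/2}f\|_{L^2}^2\le C\|w_{q,\vartheta,\beta}f\|_{L^\infty}\|\langle v\rangle^{\gamma/2}f\|_{L^2}^2$ with $\mathcal{D}\simeq\|f\|_{L^2}^2$; for small data the right side is absorbed. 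Since $\gamma<0$ there is no spectral gap, so I split $\{|v|\le R\}$ and $\{|v|>R\}$: on the former $\langle v\rangle^{\gamma/2}\gtrsim\langle R\rangle^{\gamma/2}$, and on the latter $\|f\,\mathbf{1}_{|v|>R}\|_{L^2}\le R^{-k}\|\langle v\rangle^{k}f\|_{L^2}\le C R^{-k}\|w_{q,\vartheta,\beta}f\|_{L^\infty}$ for any $k$. This yields $\tfrac{d}{dt}\mathcal{D}+\lambda R^{\gamma}\mathcal{D}\le C R^{\gamma-2k}\|w_{q,\vartheta,\beta}f\|_{L^\infty}^{2}$, and choosing the cutoff radius to grow like $R(t)\sim(1+t)^{(1+\vartheta)/(2-\gamma)}$ (equivalently exploiting the time weight) and integrating produces exactly the rate $e^{-\lambda_1 t^{\rho}}$, $\rho-1=\tfrac{(1+\vartheta)\gamma}{2-\gamma}$, once the uniform-in-$t$ $L^\infty$ bound of step (ii) is in hand.

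For step (ii) I iterate Duhamel's formula for \eqref{def.be.h} twice, exactly as in Proposition \ref{prop_apriori} but now with small data so no relative-entropy smallness is needed. The term $G_v(t,0)|h_0|$ is bounded by $Ce^{-\lambda_1 t^{\rho}}\|h_0\|_{L^\infty}$ via \eqref{nutilde estimate}; $K_w^{1-\chi}$ contributes $C\epsilon^{\gamma+3}\|h\|_{L^\infty}$ by \eqref{k_esti.2}; the nonlinear terms $w\Gamma_\pm(f,f)$ are controlled by Lemma \ref{gamma estimate} and \eqref{localgamma} as $\lesssim\nu(v)\|h\|_{L^\infty}^2$; and the doubly iterated $K_w^{\chi}$--$K_w^{\chi}$ piece on $\{|v|\le N,|u|\le 2N,|u^*|\le 3N\}$ is turned, after the change of variables \eqref{change of variable}, into $\|h(s')\|_{L^2_xL^2_v(\{|u^*|\le 3N\})}\le C_N\|f(s')\|_{L^2_{x,v}}$, which decays like $e^{-\lambda_1 (s')^{\rho}}$ by step (i). Multiplying by $e^{\lambda_1 t^{\rho}}$, using $\int_0^t e^{-\nu(v)(t-s)}\nu(v)\,ds\le1$ and $e^{\lambda_1 t^{\rho}}e^{-\lambda_1(t^{\rho}-s^{\rho})}\le e^{\lambda_1 s^{\rho}}$, and taking $\sup_{[0,T]}$, one gets $\sup_{[0,T]}e^{\lambda_1 t^{\rho}}\|h(t)\|_{L^\infty}\le C\|h_0\|_{L^\infty}+(C\epsilon^{\gamma+3}+C_\epsilon N^{-2}+C_{\epsilon,N}\delta^{1/p'})\sup_{[0,T]}e^{\lambda_1 t^{\rho}}\|h(t)\|_{L^\infty}+C_{\epsilon,N,\delta}\big(\sup_{[0,T]}e^{\lambda_1 t^{\rho}}\|h(t)\|_{L^\infty}\big)^2$. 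Fixing $\epsilon,1/N,\delta$ small pushes the linear coefficient below $1/2$, and then $\epsilon_1$ small makes the quadratic term negligible, so $\sup_{[0,T]}e^{\lambda_1 t^{\rho}}\|h(t)\|_{L^\infty}\le 2C\|h_0\|_{L^\infty}$ with $T$-independent constant; this is the claimed decay. Existence, uniqueness and $F\ge0$ then follow from the standard linear iteration (at the $F$ level, $\partial_t F^{n+1}+v\cdot\nabla_x F^{n+1}+\nu_{F^n}F^{n+1}=Q_+(F^n,F^{n+1})$, which preserves nonnegativity), for which the above estimates give a uniform small ball and a contraction in the weighted $L^\infty$ norm; one passes to the limit and reads off the stated bound.

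The main difficulty is step (i): producing the sub-exponential $L^2$ rate $e^{-\lambda_1 t^{\rho}}$ for a collision operator with no spectral gap. This forces one to couple the time-dependent cutoff radius $R(t)$ (equivalently the weight \eqref{t-weight}) with the high-moment bound coming from the $L^\infty$ estimate and, at the same time, to propagate that rate to the hydrodynamic part $\P f$ through $\mathcal{I}(t)$; because of this feedback, steps (i) and (ii) cannot be separated and must be closed together in a single Gronwall-type bootstrap, which is the technical heart of the argument (carried out in \cite{SY17}).
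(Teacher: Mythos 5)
Your proposal is correct and follows essentially the same architecture the paper relies on for this proposition --- which the paper does not prove from scratch but attributes to \cite{SY17}, adding only a remark on the adaptation to $\T^3$ and to the weight $w_{q,\vartheta,\beta}$: a macro--micro $L^2$ energy estimate, an $L^2$--$L^\infty$ bootstrap via double Duhamel iteration with the cutoff kernel, and a small-data nonlinear closure together with a positivity-preserving iteration. The one place you genuinely diverge is the derivation of the sub-exponential $L^2$ rate. The paper (following \cite[Lemma 4.3]{SY17}, in the spirit of \cite{RY08}) trades an \emph{initial} velocity weight for time decay, obtaining $\|f(t)\|_{L^2_{x,v}}^2 \lesssim e^{-\lambda_1 t^{\rho}}\,\|w_{q/4,\vartheta,0}f_0\|_{L^2_{x,v}}^2$ by a weighted interpolation carried out entirely at the $L^2$ level, so that the $L^2$ decay is self-contained and is then fed one-way into the $L^\infty$ bootstrap. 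You instead cut velocity space at a time-dependent radius $R(t)\sim(1+t)^{(1+\vartheta)/(2-\gamma)}$ and control the high-velocity tail by the running $L^\infty$ norm, which makes your steps (i) and (ii) mutually dependent and forces the simultaneous Gronwall closure you describe; both routes produce the same exponent $\rho$, yours avoiding any weighted $L^2$ hypothesis on $f_0$ (harmless anyway, since it is implied by $\|w_{q,\vartheta,\beta}f_0\|_{L^\infty}\le\epsilon_1$ on the torus) at the price of a more entangled bootstrap. One minor caveat: your approximating scheme puts $Q_+(F^n,F^{n+1})$ on the right-hand side, for which nonnegativity of $F^{n+1}$ is not immediate from Duhamel's formula because the gain term involves the unknown; the paper's Appendix (Lemma \ref{local estimate}) uses $Q_+(F^n,F^n)$, which makes $F^{n+1}\ge 0$ an inductive triviality.
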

\begin{remark}
	In Proposition \ref{small amplitude}, unlike \cite[Theorem 1.2, page 469]{SY17}, the domain is $\mathbb{T}^3$ and the polynomial term $(1+\abs{v}^2)^{\beta}$ is added to the weight function $w_{q,\vartheta,\beta}$. Thus, it is not exactly the same as Theorem 1.2 in \cite{SY17}. Although there are some differences as above, we could get the same result. Let us introduce the process of obtaining $L^\infty$ decay property. Firstly, if we ignore the boundary effects in \cite{SY17}, we get $\Vert Pf \Vert_{\nu}^2 \lesssim \Vert (I-P)f \Vert_{\nu}^2$ in $\T^3$, which implies that 
\begin{equation*}
	\frac{d}{dt} \Vert f(t) \Vert_{L^2_{x,v}}^2 + \Vert f(t) \Vert_{\nu}^2 \leq 0.
\end{equation*}
Moreover, by using the similar argument of the proof in \cite[Lemma 4.3, Eq. (4.13), page 524]{SY17}, we obtain the following $L^2$ estimate
\begin{equation*}
	\Vert f(t) \Vert_{L^2_{x,v}}^2 +e^{-\lambda_1 t^{\rho}} \int_0^t e^{\lambda_1 s^{\rho}}\Vert f(s)\Vert_{\nu}^2 ds \lesssim e^{-\lambda_1t^\rho} \Vert w_{q/4,\theta,0} f_0 \Vert_{L^2_{x,v}}^2.
\end{equation*}
Secondly, we consider the $L^2\mbox{-}L^\infty$ bootstrap argument. Since we already developed the $K_w$ estimate containing the new weight function $w_{q,\vartheta,\beta}$ in Lemma \ref{Lemma_k.1}, we can derive 
\begin{equation*}
	\Vert w_{q,\vartheta,\beta}f(t) \Vert_{L^\infty_{x,v}} \lesssim e^{-\lambda_1 t^{\rho}} \Vert w_{q,\vartheta,\beta} f_0 \Vert_{L^\infty_{x,v}} + \int_0^t \Vert f(s)\Vert_{L^2_{x,v}} ds, 
\end{equation*}
from similar arguments in the proof of Lemma \ref{prop_apriori}. Here, $f$ is the solution to the linearized Boltzmann equation 
\begin{equation*}
	\partial_t f +v\cdot \nabla_x f +\nu(v)f = Kf \quad \textrm{for } (t,x,v) \in \R_+ \times \T^3 \times \R^3. 
\end{equation*}
Combining $L^2$ estimate and $L^2\mbox{-}L^\infty$ estimate yields 
\begin{equation*}
	\Vert w_{q,\vartheta,\beta} f(t) \Vert_{L^\infty_{x,v}} \lesssim e^{-\lambda_1 t^\rho} \Vert w_{q,\vartheta,\beta} f_0\Vert_{L^\infty_{x,v}}. 
\end{equation*}
For the nonlinear Boltzmann equation, we could derive from Lemma \ref{gamma estimate} 
\begin{equation*}
	\vert w_{q,\vartheta,\beta}(v) \Gamma(f,f)(v) \vert \lesssim \nu(v) \Vert w_{q,\vartheta,\beta} f(t) \Vert_{L^\infty_{x,v}}^2. 
\end{equation*}
Finally, the $\Gamma$ estimate above and the sequential argument of the proof in \cite[page 537]{SY17} gives the global existence and time sub-exponential decay of the nonlinear Boltzmann equation. 
\end{remark}
Now we can prove Theorem \ref{mainthm} applying Proposition \ref{prop_apriori}.
\begin{proof} [Proof of main theorem]
Recall that
\begin{align*}
\sup_{0\le s \le T}\|h(s)\|_{L^{\infty}} \le \bar{M}.
\end{align*}
By Proposition \ref{prop_apriori}, it holds that
\begin{align*}
\|h(t)\|_{L^{\infty}} \le CM_0e^{-\frac{\lambda}{2}(1+t)^{\rho}}\left(1+\int_0^t\|h(s)\|_{L^{\infty}}\right) + D,
\end{align*}
where
\begin{align*}
D := C_{\bar{M}}\epsilon^{\gamma+3} +C_{\bar{M}}\left(N^{\gamma}+\frac{1}{N^{\frac{3+\gamma}{2}}}+\frac{1}{N+1}\right)+\frac{C_{\bar{M},\epsilon}}{N^2} +C_{\bar{M},N}\delta^{\frac{1}{p'}} + C_{\bar{M},N,\epsilon,\delta}\left(\mathcal{E}(F_0)^{\frac{1}{2}}+\mathcal{E}(F_0)^{\frac{1}{2p'}}\right).
\end{align*}
Define
\begin{align*}
G(t) := 1+\int_0^t\|h(s)\|_{L^{\infty}}ds.
\end{align*}
Then we can rewrite
\begin{align}\label{main estimate}
G'(t)-CM_0e^{-\frac{\lambda}{2}(1+t)^{\rho}}G(t) \le D.
\end{align}
Note that $(1+t)^{1-\rho}e^{-\frac{\lambda}{4}(1+t)^{\rho}}$ is bounded and $G(t)>0$ for $t\ge 0$. Then for all $0<t\le T$, we can obtain
\begin{align}\label{Grownall}
\begin{split}
G'(t)-CM_0e^{-\frac{\lambda}{2}(1+t)^{\rho}}G(t)
&=G'(t)-CM_0e^{-\frac{\lambda}{2}(1+t)^{\rho}}G(t)(1+t)^{\rho-1}(1+t)^{1-\rho}\\
&\ge G'(t)-CM_0e^{-\frac{\lambda}{4}(1+t)^{\rho}}G(t)(1+t)^{\rho-1}\\
&= \frac{d}{dt}\left(G(t)\exp\left\{-\frac{4CM_0}{\lambda\rho}\left(1-e^{-\frac{\lambda}{4}(1+t)^{\rho}}\right)\right\}\right).
\end{split}
\end{align}
\eqref{main estimate} and \eqref{Grownall} yield
\begin{align*}
\int_0^t \frac{d}{ds}\left( G(s)\exp\left\{-\frac{4CM_0}{\lambda\rho}\left(1-e^{-\frac{\lambda}{4}(1+s)^{\rho}}\right)\right\}\right) ds \le \int_0^t D ds.
\end{align*}
Then it follows that
\begin{align*}
G(t)\exp\left\{-\frac{4CM_0}{\lambda\rho}\left(1-e^{-\frac{\lambda}{4}(1+t)^{\rho}}\right)\right\}
\le Dt + G(0)\exp\left\{-\frac{4CM_0}{\lambda\rho}\left(1-e^{-\frac{\lambda}{4}}\right)\right\} \le 1+Dt.
\end{align*}
$G(t)$ is bounded for all $0<t\le T$ by
\begin{align}
G(t) \le (1+Dt)\exp\left\{\frac{4CM_0}{\lambda\rho}\left(1-e^{\frac{\lambda}{4}(1+t)^{\rho}}\right)\right\} \le (1+Dt)\exp\left\{\frac{4CM_0}{\lambda\rho}\right \}.
\end{align}
To make D sufficiently small, we can choose sufficiently small $\epsilon>0$ depending on $\bar{M}$, sufficiently large N depending on $\bar{M}$ and $\epsilon$, and sufficiently small $\delta>0$ depending on $\bar{M}$, $\epsilon$, and N. We will determine $\bar{M}$ to depend only on $M_0$. $\epsilon$, $\delta$, and N can be chosen depending only on $M_0$. We can take sufficiently small $\epsilon_0 \in (0,1)$ to make 
\begin{align*}
D\le \min{\left\{\frac{1}{4}\bar{M},\frac{1}{4}\epsilon_1,1\right\}},
\end{align*}
where $\epsilon_1$ was introduced in Proposition \ref{small amplitude}.
Note $1+Dt \le 1+t$ and $(1+t)e^{-\frac{\lambda}{4}(1+t)^{\rho}}$ is bounded for all $t\ge0$. Then we can get for all $0<t\le T$
\begin{align}\label{main result_2}
\begin{split}
\|h(t)\|_{L^{\infty}} 
&\le CM_0e^{-\frac{\lambda}{2}(1+t)^{\rho}}(1+Dt)\exp\left\{\frac{4CM_0}{\lambda\rho}\right\} + D\\
&\le CM_0e^{-\frac{\lambda}{4}(1+t)^{\rho}}\exp\left\{\frac{4CM_0}{\lambda\rho}\right\} + D\\
&\le \frac{1}{4}\bar{M}e^{-\frac{\lambda}{4}(1+t)^{\rho}} + D\\
&\le \frac{1}{4}\bar{M}e^{-\frac{\lambda}{4}t^{\rho}} + D,
\end{split}
\end{align}
where $\bar{M}$ is defined as
\begin{align*}
\bar{M} := 4CM_0\exp\left\{\frac{4CM_0}{\lambda\rho}\right\} + 4M_0.
\end{align*}
Then it follows from \eqref{main result_2} that for all $0< t \le T$,
\begin{align*}
\|h(t)\|_{L^{\infty}} \le \frac{1}{4}\bar{M} + \frac{1}{4}\bar{M} = \frac{1}{2}\bar{M}.
\end{align*}
Therefore, we have proven that if a priori assumption holds,
\begin{align}\label{1/2 priori estimate}
\sup_{0\le t \le T}\|h(t)\|_{L^{\infty}} \le \frac{1}{2}\bar{M}.
\end{align}
Next we should extend local existence of solution to global existence and check that
\begin{align*}
\|h(T)\|_{L^{\infty}} \le \epsilon_1.
\end{align*} 
By Lemma \ref{local estimate}, there exists a time $\hat{t}_0>0$ such that the solution $f(t,x,v)$ of the Boltzmann equation exists for $t \in [0,\hat{t}_0]$ and satisfies
\begin{align*}
\sup_{0\le t \le \hat{t}_0}\|w_{q,\vartheta,\beta}f(t)\|_{L^{\infty}} \le 2\|w_{q,\vartheta,\beta}f_0\|\le \frac{1}{2}\bar{M}.
\end{align*}
Considering $\hat{t}_0$ as the initial time, by Lemma \ref{local estimate}, for some $\tilde{t}>0$, it holds that
\begin{align*}
\sup_{\hat{t}_0\le t \le \hat{t}_0+\tilde{t}}\|w_{q,\vartheta,\beta}f(t)\|_{L^{\infty}}\le 2\|w_{q,\vartheta,\beta}f(\hat{t}_0)\|_{L^{\infty}} \le \bar{M},
\end{align*}
implying, by Lemma \ref{1/2 priori estimate},
\begin{align*}
\sup_{0 \le t \le \hat{t}_0+\tilde{t}}\|w_{q,\vartheta,\beta}f(t)\|_{L^{\infty}} \le \frac{1}{2}\bar{M}.
\end{align*}
Define 
\begin{align*}
T:=\left(\frac{4}{\lambda}\left[\ln \bar{M} + |\ln \epsilon_1|\right]\right)^{\frac{1}{\rho}},
\end{align*}
where $\epsilon_1$ was introduced in Proposition \ref{small amplitude}. We can extend the local existence of the solution to $0\le t\le T$ as above and we can use \eqref{main result_2} to gain
\begin{align*}
\|w_{q,\vartheta,\beta}f(T)\|_{L^{\infty}} 
&\le \frac{1}{4}\bar{M}e^{-\frac{\lambda}{4}T^{\rho}} + D \\
&\le \frac{1}{4}\epsilon_1 + \frac{1}{4}\epsilon_1 < \epsilon_1.
\end{align*}
Therefore we show the global existence and uniqueness of the solution to the Boltzmann equation by Proposition \ref{small amplitude}. For all $t\ge T$, we obtain from Proposition \ref{small amplitude}
\begin{align*}
\|h(t)\| \le C\|h(T)\|_{L^{\infty}}e^{-\lambda(t-T)^{\rho}} \le C\epsilon_1e^{-\lambda t^{\rho}}.
\end{align*}
Taking $\lambda_0 := \min{\left\{\frac{\lambda}{4},\lambda_1\right\}}$, we have that
\begin{align*}
\|w_{q,\vartheta,\beta}f(t)\|_{L^{\infty}} \le C\bar{M}e^{-\lambda_0t^{\rho}} \le \left(4CM_0\exp\left\{\frac{4CM_0}{\lambda_0\rho}\right\} + 4M_0\right)e^{-\lambda_0 t^{\rho}},
\end{align*}
for all $t\ge 0$. 
\end{proof}

\appendix

\section{Local Existence and Uniqueness}

\begin{lemma}\label{local estimate}
	Let $0<q<1$ and $0\le\vartheta<-\frac{2}{\gamma}$ be fixed in the weight function \eqref{weight}. If $F_0(x,v) = \mu(v) +\sqrt{\mu(v)}f_0(x,v)\ge 0$ and $\|w_{q,\vartheta,\beta}f_0\|<\infty$, then there exists a time $\hat{t}_0>0$ such that the inital value problem \eqref{def.be} and \eqref{id} has a unique non-negative solution $F(t,x,v)=\mu(v) + \sqrt{\mu(v)}f(t,x,v)$ for $t\in[0, \hat{t}_0]$, satisfying 
\begin{equation}\label{locales}
	\sup_{0\le t \le \hat{t}_0} \|w_{q,\vartheta,\beta}f(t)\|_{L^{\infty}} \le 2\|w_{q,\vartheta,\beta}f_0\|_{L^{\infty}}.
\end{equation}
\end{lemma}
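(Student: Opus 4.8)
The plan is to run a Picard iteration that is built to keep every iterate nonnegative, and then to propagate the weighted $L^\infty$ bound over a short interval by a Gr\"onwall/continuity argument. Write $R[G](v):=\int_{\R^3\times\S^2}B(v-u,\omega)G(u)\,d\omega du$, set $F^0:=\mu$, and for $n\ge0$ let $F^{n+1}$ solve the linear transport problem
\begin{equation*}
\partial_tF^{n+1}+v\cdot\nabla_xF^{n+1}+R[F^n]\,F^{n+1}=Q_+(F^n,F^n),\qquad F^{n+1}|_{t=0}=F_0 .
\end{equation*}
Since $B\ge0$, whenever $F^n\ge0$ one has $R[F^n]\ge0$ and $Q_+(F^n,F^n)\ge0$, so the Duhamel representation of $F^{n+1}$ along the characteristics $X(s)=x-v(t-s)$ is a sum of nonnegative terms; hence $F^n\ge0$ for all $n$ by induction. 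Passing to $f^{n+1}$ via $F^{n+1}=\mu+\sqrt\mu f^{n+1}$ and using $Q(\mu,\mu)=0$ together with the definitions in \eqref{L_operator}, a direct computation shows that $f^{n+1}$ solves
\begin{equation*}
\partial_tf^{n+1}+v\cdot\nabla_xf^{n+1}+\nu(v)f^{n+1}+\Gamma_-(f^{n+1},f^n)=Kf^n+\Gamma_+(f^n,f^n),
\end{equation*}
so that $h^{n+1}:=w_{q,\vartheta,\beta}f^{n+1}$ satisfies the same equation with $\nu(v)$ replaced by $\tilde\nu(v,t)=\nu(v)+\tfrac{\vartheta q|v|^2}{8(1+t)^{\vartheta+1}}\ge0$, with $Kf^n$ replaced by $K_wh^n$, with $\Gamma_+$ weighted, and with the loss contribution appearing as $h^{n+1}(v)\,R[\sqrt\mu f^n](v)$.

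For the a priori bound I would integrate the $h^{n+1}$-equation along characteristics. The nonlinear inputs are all controlled by estimates already at hand: the elementary bound $\int_{\R^3}|v-u|^\gamma\mu(u)^{1/4}\,du\lesssim\langle v\rangle^\gamma$ gives $|R[\sqrt\mu f^n](v)|\le C\|h^n\|_{L^\infty}$; Lemma \ref{Lemma_k.1} (with $k_w$ integrable) gives $\|K_wh^n(s)\|_{L^\infty}\le C\|h^n(s)\|_{L^\infty}$; and Lemma \ref{gamma estimate} together with \eqref{localgamma} gives $\|w\Gamma_+(f^n,f^n)(s)\|_{L^\infty}\le C\|h^n(s)\|_{L^\infty}^2$. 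Absorbing the (not sign-definite) coefficient $R[\sqrt\mu f^n]$ into the integrating factor, which costs only a harmless factor $e^{C\hat t_0\,\mathcal N_n}$ with $\mathcal N_n:=\sup_{0\le s\le\hat t_0}\|h^n(s)\|_{L^\infty}$, and using $\int_0^te^{-\int_s^t\tilde\nu\,d\tau}\tilde\nu\,ds\le1$ on the terms carrying a factor $\nu$, one arrives at $\mathcal N_{n+1}\le e^{C\hat t_0\mathcal N_n}\big(M+C\hat t_0(\mathcal N_n+\mathcal N_n^2)\big)$ with $M:=\|w_{q,\vartheta,\beta}f_0\|_{L^\infty}$. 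Since $F^0=\mu$ gives $h^0\equiv0$, choosing $\hat t_0=\hat t_0(M)>0$ small enough that $e^{2CM\hat t_0}\big(M+C\hat t_0(2M+4M^2)\big)\le2M$ closes the induction $\mathcal N_n\le2M\Rightarrow\mathcal N_{n+1}\le2M$, which is \eqref{locales} at the level of iterates.

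Convergence and uniqueness follow from a contraction estimate for $g^{n+1}:=h^{n+1}-h^n$, which solves the same linear equation with zero data and source $K_wg^n+w\big(\Gamma_+(f^n,f^n)-\Gamma_+(f^{n-1},f^{n-1})\big)$ plus the difference of the two loss terms $h^{n+1}R[\sqrt\mu f^n]-h^nR[\sqrt\mu f^{n-1}]$. Writing the quadratic differences bilinearly and invoking the same bounds with $\mathcal N_n\le2M$, every contribution is dominated by $C(1+M)\,\sup_{[0,\hat t_0]}\|g^n\|_{L^\infty}$ after time integration, so $\sup_{[0,\hat t_0]}\|g^{n+1}\|_{L^\infty}\le C(1+M)\hat t_0\,\sup_{[0,\hat t_0]}\|g^n\|_{L^\infty}$; shrinking $\hat t_0$ once more makes the map a contraction, hence $h^n\to h$ in $C([0,\hat t_0];L^\infty_{x,v})$. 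Then $f:=h/w_{q,\vartheta,\beta}$ and $F:=\mu+\sqrt\mu f$ solve \eqref{def.be}--\eqref{id}, the bounds $F\ge0$ and \eqref{locales} pass to the limit, and the same contraction applied to the difference of two solutions with the same initial datum gives uniqueness.

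The step I expect to be the only genuine obstacle is reconciling positivity with the weighted estimate. At the $F$-level the loss term $R[F^n]F^{n+1}$ is a bona fide nonnegative damping, which is what makes $F^n\ge0$ manifest; but after subtracting $\mu$ the corresponding term $\Gamma_-(f^{n+1},f^n)=R[\sqrt\mu f^n]f^{n+1}$ has an indefinite coefficient, so in the equation for $h^{n+1}$ it can no longer be treated as damping and must be carried inside the Duhamel integrating factor. This is harmless precisely because $\hat t_0$ is small and $\|h^n\|_{L^\infty}$ is a priori $O(M)$; with that in place the remainder is a routine fixed-point argument resting on Lemmas \ref{Lemma_k.1} and \ref{gamma estimate}.
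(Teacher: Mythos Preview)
Your argument is essentially the paper's own: the same positivity-preserving iteration $F^{n+1}$ with $F^0=\mu$, the same passage to $h^{n+1}=w_{q,\vartheta,\beta}f^{n+1}$, and the same uniform bound plus contraction on a short interval. Two small remarks. First, the paper does not put $R[\sqrt\mu f^n]$ into the integrating factor; it simply uses \eqref{localgamma} to bound $|w\Gamma_-(f^n,f^{n+1})|\le C\nu(v)\|h^n\|_{L^\infty}\|h^{n+1}\|_{L^\infty}$ as a source, producing an implicit term $C\hat t_0\|h^n\|_{L^\infty}\|h^{n+1}\|_{L^\infty}$ on the right that is absorbed by smallness of $\hat t_0$. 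Your integrating-factor variant is fine too (and your bound $|R[\sqrt\mu f^n]|\le C\|h^n\|_{L^\infty}$ is correct), but note that your written $\Gamma_-(f^{n+1},f^n)$ has the arguments swapped relative to the paper's convention $\Gamma_-(g,f)(v)=f(v)R[\sqrt\mu g](v)$; your verbal description matches the correct object $h^{n+1}R[\sqrt\mu f^n]$. Second, in the contraction step the loss-difference $h^{n+1}R[\sqrt\mu f^n]-h^nR[\sqrt\mu f^{n-1}]$ splits as $g^{n+1}R[\sqrt\mu f^n]+h^nR[\sqrt\mu(f^n-f^{n-1})]$, so the right-hand side carries a $C\hat t_0\|h_0\|_{L^\infty}\sup\|g^{n+1}\|_{L^\infty}$ term in addition to the $\sup\|g^n\|_{L^\infty}$ terms you wrote; this is exactly what the paper absorbs to obtain $\tfrac{2}{3}\sup\|g^{n+1}\|_{L^\infty}\le\tfrac{1}{3}\sup\|g^n\|_{L^\infty}$. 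With that correction your contraction closes.
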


\begin{proof}

For the local existence of non-negative solution of the Boltzmann equation \eqref{def.be}, we consider the following iteration : 
\begin{align}\label{localitF}
\begin{split}
\begin{cases}
\p_t F^{n+1} + v\cdot\nabla_xF^{n+1} + F^{n+1}\int_{\mathbb{R}^3}\int_{\mathbb{S}^2}B(v-u,\om)F^n(u) d\om du = Q_+(F^n,F^n)\\
F(0,x,v) = F_0(x,v) \ge 0 , F^0(t,x,v)=\mu(v).
\end{cases}
\end{split}
\end{align}
By induction on n, we can prove that all $F^n$ is non-negative for all $n>0$.\\
Define $I^n(t,x,v) := \int_{\mathbb{R}^3}\int_{\mathbb{S}^2}B(v-u,\om)F^n(u) d\om du$. For $n=1$, by our assumption that $F_0$ is nonneagtive,
\begin{equation*}
F^1(t,x,v) = e^{-\nu(v)t}F_0(x-tv,v) + \int_0^t e^{-\nu(v)(t-s)}\nu(v)\mu(v) ds \ge 0,
\end{equation*}
because $\int_{\R^3}\int_{\mathbb{S}^2}B(u-v,w)\mu(u)dwdu = \nu(u)$. Then, we suppose that $F^n$ is nonnegative for $n=1,2,\cdots, k$.
By Duhamel's principle, we get
\begin{equation*}
F^{k+1} = e^{-\int_0^t I^k(\tau,x,v)d\tau}F_0 + \int_0^t e^{-\int_s^t I^k(\tau,x,v)d\tau}Q_+(F^k,F^k) ds \ge 0,
\end{equation*}
since $F_0 \ge 0$ and $Q_+(F^n,F^n) \ge 0$. Therefore $F^n$ is nonnegative for all $n>0$.\\
Hence we can rewrite the above iteration \eqref{localitF} for $h=w_{q,\vartheta,\beta}f$ as follow : 
\begin{align}\label{localitFh}
\begin{split}
\begin{cases}
 (\p_t + v\cdot\nabla_x + \tilde{\nu})h^{n+1}(t) = K_wh^n(t) + w\Gamma_+(f^n,f^n) - w\Gamma_-(f^n,f^{n+1})\\
h^{n+1}(0,x,v)=h_0(x,v), h^0=0.
\end{cases}
\end{split}
\end{align}
We will show that there exists $\hat{t}_1>0$ such that \eqref{localitFh} has a solution over $[0,\hat{t}_1]$ satisfying
\begin{equation}\label{localitFhes}
\sup_{0\le t \le \hat{t}_1}\|h^n(t)\|_{L^{\infty}} \le 2\|h_0\|_{L^{\infty}},
\end{equation}
for all $n>0$ and $t\in[0,\hat{t}_1]$.
For $n=1$, $h^1(t)=G(t,0)h_0$, implying that \eqref{localitFhes} holds for $n=1$.\\
Then we suppose that \eqref{localitFhes} holds for $n=1,2,\cdots,k$. 
By Duhamel's principle and \eqref{localgamma}, 
\begin{align*}
	|h^{k+1}(t)|
&\le G(t,0)\|h_0\|_{L^{\infty}} + \int_0^t G(t,s)[ |K_wh^k(s)|+|w\Gamma_+(f^k,f^k)(s)|+|w\Gamma_-(f^k,f^{k+1})(s)| ] ds\\
&\le \|h_0\|_{L^{\infty}} + C\hat{t}_1[\sup_{0\le s\le \hat{t}_1}\|h^k(s)\|_{L^{\infty}}+\sup_{0\le s\le \hat{t}_1}\|h^k(s)\|^2_{L^{\infty}}+\sup_{0\le s\le \hat{t}_1}\|h^k(s)\|_{L^{\infty}}\sup_{0\le s\le \hat{t}_1}\|h^{k+1}(s)\|_{L^{\infty}}]\\
&\le \|h_0\|_{L^{\infty}} + C\hat{t}_1\|h_0\|_{L^{\infty}}\left(\|h_0\|_{L^{\infty}}+1\right)+ C\hat{t}_1\|h_0\|_{L^{\infty}}\sup_{0\le s\le \hat{t}_1}\|h^{k+1}(s)\|_{L^{\infty}}.
\end{align*}
Taking $\hat{t}_1\le \min{\left\{\frac{1}{3}\left\{C\left(\|h_0\|_{L^{\infty}}+1\right)\right\}^{-1}, \frac{1}{3}\left(C\|h_0\|_{L^{\infty}}\right)^{-1}\right\}}$, we can get
\begin{align*}
\frac{2}{3}\sup_{0\le s\le \hat{t}_1}\|h^{k+1}(s)\|_{L^{\infty}} \le \frac{4}{3}\|h_0\|_{L^{\infty}}.
\end{align*}
By induction on $n$, \eqref{localitFhes} holds for all $n>0$ and $t\in[0,\hat{t}_1]$. 
For proving the convergence of $\{h^n\}$, we consider $(h^{n+1}-h^{n})$. $(h^{n+1}-h^{n})$ is the solution of the following equation : 
\begin{align*}
\begin{cases}
(\p_t + v\cdot\nabla_x + \tilde{\nu})(h^{n+1}-h^n) &= K_w(h^n-h^{n-1}) + w\Gamma_+(f^n,f^n)\\
&\indent{ }-w\Gamma_+(f^{n-1},f^{n-1}) - w\Gamma_-(f^n,f^{n+1}) + w\Gamma_+(f^{n-1},f^{n})\\
(h^{n+1}-h^n)(0)=0.
\end{cases}
\end{align*}
Hence it holds that
\begin{align}\label{gamma-}
\begin{split}
w\Gamma(f,g)-w\Gamma(h,l) 
&= w\Gamma(f,g)-w\Gamma(h,g)+w\Gamma(h,g)-w\Gamma(h,l)\\
&= w\Gamma(f-h,g) + w\Gamma(h,g-l).
\end{split}
\end{align}
\eqref{gamma-} holds although $w\Gamma$ is replaced to $w\Gamma_-$ or $w\Gamma_+$. Applying Duhamel's principle and \eqref{gamma-}, we have
\begin{align*}
|h^{n+1}(t)-h^n(t)|
&\le \int_0^t G(t,s)\{|K_w(h^n-h^{n-1})(s)| + |w\Gamma_+(f^n-f^{n-1},f^n)(s)| + |w\Gamma_+(f^{n-1},f^n-f^{n-1})(s)|\\
&\indent +|w\Gamma_-(f^n-f^{n-1},f^{n+1})(s)|+ |w\Gamma_-(f^{n-1},f^{n+1}-f^n)(s)|\}ds\\
&\le C\hat{t}_0(1+\sup_{0\le s \le \hat{t}_0}\|h^n(s)\|_{L^{\infty}} + \sup_{0\le s \le \hat{t}_0}\|h^{n-1}(s)\|_{L^{\infty}}+\sup_{0\le s \le \hat{t}_0}\|h^{n+1}(s)\|_{L^{\infty}})\sup_{0\le s \le \hat{t}_0}\|h^n(s)-h^{n-1}(s)\|_{L^{\infty}}\\
&\indent + C\hat{t}_0\sup_{0\le s \le \hat{t}_0}\|h^{n-1}(s)\|_{L^{\infty}}\sup_{0\le s \le \hat{t}_0}\|h^{n+1}(s)-h^n(s)\|_{L^{\infty}}\\
&\le C_1\hat{t}_0\left\{(1+\|h_0\|_{L^{\infty}})\sup_{0\le s \le \hat{t}_0}\|h^n(s)-h^{n-1}(s)\|_{L^{\infty}} + \|h_0\|_{L^{\infty}}\sup_{0\le s \le \hat{t}_0}\|h^{n+1}(s)-h^n(s)\|_{L^{\infty}}\right\},
\end{align*}
where $C_1 = 4C$. Take $C'=\max{\{C_1, C_2\}}$ and $\hat{t}_0\le \min{\{\hat{t}_1, \frac{1}{3}\{C'(\|h_0\|_{L^{\infty}}+1)\}^{-1}, \frac{1}{3}(C'\|h_0\|_{L^{\infty}})^{-1}\}}$, where $C_2$ will be determined later in \eqref{local uniqueness}. Then it follows that
\begin{align*}
 \frac{2}{3}\sup_{0\le s \le \hat{t}_0}\|h^{n+1}(s)-h^n(s)\|_{L^{\infty}} \le \frac{1}{3}\sup_{0\le s \le \hat{t}_0}\|h^{n}(s)-h^{n-1}(s)\|_{L^{\infty}}.
\end{align*}
Therefore $\{h^n\}$ is a convergent sequence and we can denote $h^n \to h$, and $F^n \to F$ as $n\to \infty$. Since all $F^n$ is non-negative, $F$ is non-negative for $t\in[0,\hat{t}_0]$, and \eqref{localitFhes} implies \eqref{locales}.
For the uniqueness of the local solution, suppose that there is another solution $g$ to the Boltzmann equation with the same initial condition as $f$ satisfying
\begin{equation}\label{localges}
\sup_{0\le t \le \hat{t}_0} \|w_{q,\vartheta,\beta}g(t)\|_{L^{\infty}} \le 2\|w_{q,\vartheta,\beta}f_0\|_{L^{\infty}},
\end{equation}
and set $h_1:=w_{q,\vartheta,\beta}g$. Then by \eqref{localges}, we obtain
\begin{align}\label{local uniqueness}
\begin{split}
|h_1(t)-h(t)|
& \le \int_0^t G(t,s)\{|K_{w}(h_1-h)(s)| + |w\Gamma(f-g,f)(s)| + |w\Gamma(g,f-g)(s)|\}\\
& \le C\hat{t}_0\sup_{0\le s\le\hat{t}_0}(\|h_1(s)\|_{L^{\infty}}+\|h(s)\|_{L^{\infty}}+1)\sup_{0\le s\le\hat{t}_0}\|h_1(s)-h(s)\|_{L^{\infty}}\\
&\le  C_2\hat{t}_0(\|h_0\|_{L^{\infty}}+1)\sup_{0\le s\le\hat{t_0}}\|h_1(s)-h(s)\|_{L^{\infty}}\\
&\le \frac{1}{2}\sup_{0\le s\le\hat{t_0}}\|h_1(s)-h(s)\|_{L^{\infty}},
\end{split}
\end{align}
where $C_2 = 4C$, implying that $h_1=h$.\\
\end{proof}

\noindent{\bf Acknowledgments.} DL and GK thank Renjun Duan for fruitful discussion. They also thank Yong Wang for his valuable comment about the paper \cite{DHWZ19}. DL and GK are supported by the National Research Foundation of Korea(NRF) grant funded by the Korea government(MSIT)(No. NRF-2019R1C1C1010915). DL is also supported by the POSCO Science Fellowship of POSCO TJ Park Foundation.

\bibliographystyle{plain}

\begin{thebibliography}{99}

\bibitem{BG}
M. Briant and Y. Guo, Asymptotic stability of the Boltzmann equation with Maxwell boundary conditions, {\it J. Differential Equations} {\bf 261} (2016), no. 12, 7000--7079.

\bibitem{C08}
R.E. Caflisch, The Boltzmann equation with a soft potential. II. Nonlinear, spatially- periodic, \textit{Commun. Math. Phys.} \textbf{74} (1980), 97–109.

\bibitem{CKL19}
Y. Cao, C. Kim and D. Lee, Global strong solutions of the Vlasov-Poisson-Boltzmann system in bounded domains, \textit{Arch. Rational Mech. Anal.} \textbf{233} (2019), 1027-1130. 


\bibitem{DV} 
L. Desvillettes and  C. Villani, 
On the trend to global equilibrium for spatially inhomogeneous kinetic systems: The Boltzmann equation, {\it Invent. Math.} \textbf{159} (2005), 243--316.

\bibitem{D-Lion} 
R.J. DiPerna, P.-L. Lions, On the Cauchy problem for Boltzmann equation: Global existence and weak stability, 
{\it Ann. of Math.} \textbf{130} (1989), 321--366.

\bibitem{RR11}
R. Duan and R. M. Strain, Optimal time decay of the Vlasov-Poisson-Boltzmann system in $\mathbb{R}^3$, \textit{Arch. Ration. Mech. Anal.} \textbf{199} (2011), 291-328.

\bibitem{RTH12}
R.-J. Duan, T. Yang, H.-J. Zhao, The Vlasov–Poisson–Boltzmann system in the whole space: the hard potential case, \textit{J. Differential Equations} \textbf{252} (12) (2012), 6356–6386.

\bibitem{RSTH13}
R.-J. Duan, S.-Q. Liu, T. Yang, H.-J. Zhao, Stability of the nonrelativistic Vlasov–Maxwell–Boltzmann system for angular non-cutoff potentials, \textit{Kinet. Relat. Models} \textbf{6} (1) (2013), 159–204.

\bibitem{RTH13}
R.-J. Duan, T. Yang, H.-J. Zhao, The Vlasov–Poisson–Boltzmann system for soft potentials, \textit{Math. Models Methods Appl. Sci.} \textbf{23} (6) (2013), 979–1028.

\bibitem{R14}
R.-J. Duan, Global smooth dynamics of a fully ionized plasma with long-range collisions. \textit{Ann. Inst. H. Poincar\'e Anal. Non Lin\'eaire} \textbf{31} (2014), 751–778.

\bibitem{DHWY17} 
R.-J. Duan, F.M. Huang, Y. Wang, and T. Yang, Global well-posedness of the Boltzmann equation with large amplitude initial data,  \textit{Arch. Rational Mech. Anal.} \textbf{225} (1) (2017), 375--424. 

\bibitem{DHWZ19}
R.-J. Duan, F.M. Huang, Y. Wang, and Z. Zhang,  Effects of Soft Interaction and Non-isothermal Boundary Upon Long-Time Dynamics of Rarefied Gas, \textit{Arch. Ration. Mech. Anal.} \textbf{234} (2019), 925–1006.

\bibitem{DW} 
R.-J. Duan and Y. Wang, The Boltzmann equation with large-amplitude initial data in bounded domains, \textit{Adv. Math.} \textbf{343} (2019), 36--109.

\bibitem{DKL20}
R.-J. Duan, G. Ko, and D. Lee, The Boltzmann equation with large-amplitude initial data and specular reflection boundary condition, 2020, arXiv:2011.01503.  

\bibitem{EGKM}
R. Esposito, Y. Guo, C. Kim, and R. Marra, Non-isothermal boundary in the Boltzmann theory and Fourier law, {\it Commun. Math. Phys.} {\bf 323} (2013), 177--239. 

\bibitem{EGKM-hy}
R. Esposito, Y. Guo, C. Kim and R. Marra, Stationary solutions to the Boltzmann equation in the hydrodynamic limit, {\it Ann. PDE} {\bf 4} (2018), no. 1, 119 pp. 



\bibitem{Glassey} 
\newblock R.T. Glassey,
\emph{The Cauchy Problem in Kinetic Theory,}
\newblock Society for Industrial and Applied Mathematics (SIAM), Philadelphia,
1996.

\bibitem{Guo02}
Y. Guo, The Vlasov-Poisson-Boltzmann system near Maxwellians, \textit{Comm. Pure Appl. Math.} \textbf{55} (2002), 1104-1135.

\bibitem{Guo03}
Y. Guo, The Vlasov-Maxwell-Boltzmann system near Maxwellians, \textit{Invent. Math.} \textbf{153} (2003), 593-630.



\bibitem{Guo} 
Y. Guo, Decay and continuity of the Boltzmann equation in bounded domains, {\it Arch. Rational Mech. Anal.} {\bf 197} (2010), no.~3, 713--809.


\bibitem{GKTT}
Y. Guo, C. Kim, D. Tonon,  and A. Trescases, Regularity of the Boltzmann equation in convex domains, {\it Invent. Math.} {\bf 207} (2017), no.~1, 115--290.



\bibitem{GZ}
Y. Guo and F. Zhou, Boltzmann diffusive limit with Maxwell boundary condition, 2018, arXiv:1809.06763.



\bibitem{Kim11}
C. Kim, Formation and propagation of discontinuity for Boltzmann equation in non-convex domains, \textit{Comm. Math. Phys.} \textbf{308} (2011), 641–701.

\bibitem{KL-nc}
C. Kim and D. Lee, Decay of the Boltzmann equation with the specular boundary condition in non-convex cylindrical domains, {\it Arch. Ration. Mech. Anal.} {\bf 230} (2018), no.~1, 49--123. 

\bibitem{KL} 
C. Kim and D. Lee, The Boltzmann equation with specular boundary condition in convex domains, \textit{Comm. Pure Appl. Math.} \textbf{71} (3) (2018), 411--504.

\bibitem{SY17}
S. Liu, X.Yang, The Initial Boundary Value Problem for the Boltzmann Equation with Soft Potential. \textit{Arch. Rational Mech. Anal.} \textbf{223} (1) (2017), 463–541.  

\bibitem{Mischler00}
S. Mischler, On the initial boundary value problem for the Vlasov–Poisson–Boltzmann system, \textit{Commun. Math. Phys.} \textbf{210} (2000), 447–466.



\bibitem{RY08}
R.M. Strain, Y. Guo, Exponential decay for soft potentials near maxwellian, \textit{Arch. Rational Mech. Anal.} \textbf{187} (2) (2008), 287--339. 



\bibitem{V-Cercig}
Villani, C. Cercignani's Conjecture is Sometimes True and Always Almost True, \textit{Commun. Math. Phys.} \textbf{234} (2003), 455-490.

\end{thebibliography}

\end{document}